\newcommand{\seq}{\coloneqq}
\newenvironment{red}{\relax\color{red}}{\relax}
\newenvironment{blue}{\relax\color{blue}}{\relax}
\newcommand{\ber}{\begin{red}}
\newcommand{\er}{\end{red}}
\newcommand{\beb}{\begin{blue}}
\newcommand{\eb}{\end{blue}}
\newtheorem{Thm}{Theorem}[section]
\newtheorem{Cor}[Thm]{Corollary}
\newtheorem{Prop}[Thm]{Proposition}
\newtheorem{Lem}[Thm]{Lemma}
\theoremstyle{definition}
\newtheorem{Def}[Thm]{Definition}
\newtheorem{Rem}[Thm]{Remark}
\newtheorem{Ex}[Thm]{Example}
\newtheorem{Ques}[Thm]{Question}
\numberwithin{equation}{section}
\newcommand{\arxiv}[1]{\href{http://arxiv.org/abs/#1}{\texttt{arXiv:#1}}}
\newcommand{\ol}{\overline}
\newcommand{\ul}{\underline}
\newcommand{\ep}{\epsilon}
\newcommand{\vep}{\varepsilon}
\newcommand{\Hom}{\mathop{\mathrm{Hom}}\nolimits}
\newcommand{\Aut}{\mathop{\mathrm{Aut}}\nolimits}
\newcommand{\End}{\mathop{\mathrm{End}}\nolimits}
\newcommand{\Ext}{\mathop{\mathrm{Ext}}\nolimits}
\newcommand{\Ker}{\mathop{\mathrm{Ker}}\nolimits}
\newcommand{\Cok}{\mathop{\mathrm{Coker}}\nolimits}
\newcommand{\Image}{\mathop{\mathrm{Im}}\nolimits}
\newcommand{\Res}{\mathop{\mathrm{Res}}\nolimits}
\newcommand{\Infl}{\mathop{\mathrm{Infl}}\nolimits}
\newcommand{\For}{\mathop{\mathrm{For}}\nolimits}
\newcommand{\Spec}{\mathop{\mathrm{Spec}}\nolimits}
\newcommand{\vdim}{\mathop{\ul{\dim}}}
\newcommand{\fg}{\mathfrak{g}}
\newcommand{\dd}{\mathfrak{d}}
\newcommand{\ee}{\mathfrak{e}}
\newcommand{\fA}{\mathfrak{A}}
\newcommand{\oo}{\mathfrak{o}}
\newcommand{\N}{\mathbb{N}}
\newcommand{\Z}{\mathbb{Z}}
\newcommand{\Q}{\mathbb{Q}}
\newcommand{\C}{\mathbb{C}}
\newcommand{\kk}{\Bbbk}
\newcommand{\Cc}{\mathscr{C}}
\newcommand{\cL}{\mathcal{L}}
\newcommand{\cM}{\mathcal{M}}
\newcommand{\rH}{\mathrm{H}}
\newcommand{\tB}{\tilde{B}}
\newcommand{\wh}{\widehat}
\newcommand{\hrH}{\wh{\rH}}
\newcommand{\Perv}{\mathop{\mathrm{Perv}}}
\newcommand{\id}{\mathsf{id}}
\newcommand{\qv}{\mathfrak{M}^\bullet}
\newcommand{\pt}{\mathrm{pt}}
\newcommand{\IC}{\mathrm{IC}}
\newcommand{\Gr}{\mathop{\mathrm{Gr}}\nolimits\!}
\newcommand{\II}{\mathtt{I}}
\newcommand{\rep}{\mathop{\mathsf{rep}}\nolimits}
\newcommand{\inj}{\mathop{\mathsf{inj}}\nolimits}
\newcommand{\NII}{\N^{\II \sqcup \II}}
\newcommand{\gV}{\mathscr{V}^\bullet}
\newcommand{\gdim}{\mathop{\mathrm{gdim}}\nolimits}
\newcommand{\bv}{\mathbf{v}}
\newcommand{\Dom}{\Lambda^+}
\newcommand{\tw}{\tilde{w}}
\newcommand{\uk}{\ul{\kk}}
\newcommand{\Lfg}{L\fg}
\newcommand{\tchi}{\tilde{\chi}}
\subjclass[2020]{17B37, 18M05, 16G20, 13F60}
\title[R-matrices and E-invariants]{Singularities of normalized R-matrices and E-invariants for Dynkin quivers}
\date{\today}
\author{Ryo Fujita}
\address{Research Institute for Mathematical Sciences, Kyoto University, Kitashirakawa-Oiwake-cho, Sakyo, Kyoto, 606-8502, Japan}
\email{rfujita@kurims.kyoto-u.ac.jp}
\begin{document}

\maketitle

\begin{abstract}
We study the singularities of normalized $R$-matrices between arbitrary simple modules over the quantum loop algebra of type ADE in Hernandez--Leclerc's level-one subcategory using equivariant perverse sheaves, following the previous works by Nakajima [Kyoto J.\ Math.\ 51(1), 2011] and Kimura--Qin [Adv.\ Math.\ 262, 2014]. 
We show that the pole orders of these $R$-matrices coincide with the dimensions of $E$-invariants between the corresponding decorated representations of Dynkin quivers.  
This result can be seen as a correspondence of numerical characteristics between additive and monoidal categorifications of cluster algebras of finite ADE type. 
\end{abstract}

\tableofcontents

\section{Introduction}
\subsection{}
The quantum loop algebra $U_q(\Lfg)$ associated with a complex simple Lie algebra $\fg$ was introduced in mid 80s as the symmetry of certain quantum integrable systems and solvable lattice models in theoretical physics.
It is a Hopf algebra deformation of the universal enveloping algebra of the loop Lie algebra $\Lfg = \fg[z^{\pm 1}]$.
The category $\Cc$ of finite-dimensional representations of $U_q(L\fg)$ exhibits a very interesting monoidal structure and has been studied intensively for several decades. 

One of the remarkable features of the monoidal category $\Cc$ is that it is not braided, in contrast to that of finite-dimensional representations of $U_q(\fg)$, but it is ``generically braided'' in the following sense.
Throughout the paper, we assume that the quantum parameter $q$ is generic.
For two simple objects $L$ and $L'$ of $\Cc$, the tensor product $L \otimes L'$ sometimes fails to be isomorphic to the opposite product $L' \otimes L$. 
However, if we replace $L'$ with its deformation $L'(z)$ with a generic spectral parameter $z$,  there always exists a unique isomorphism 
\[ R_{L,L'}(z) \colon L\otimes L'(z) \xrightarrow{\simeq} L'(z) \otimes L\]
called the \textit{normalized $R$-matrix} between $L$ and $L'$.
It can be seen as a matrix-valued rational function in $z$, and hence one can talk about its singularities. 
Let $\oo(L,L')$ denote the pole order of $R_{L,L'}(z)$ at $z=1$.
If both $R_{L,L'}(z)$ and $R_{L',L}(z)$ are regular at $z=1$, i.e., if $\oo(L,L') = \oo(L',L) = 0$ holds, the objects $L$ and $L'$ commute in $\Cc$ and the specialization $R_{L,L'}(z)|_{z=1}$ gives an isomorphism $L \otimes L' \simeq L' \otimes L$.
Thus, one can think of the pole order $\oo(L,L')$ as a measure of the non-commutativity between $L$ and $L'$.
In fact, it plays a key role in the recent studies on the category $\Cc$, especially in the theory of monoidal categorification of cluster algebras \cite{KKOP} and in the construction of generalized quantum affine Schur--Weyl duality functors \cite{KKK}. 

Despite its importance, computing the pole order $\oo(L,L')$ for general simple objects $L$ and $L'$ seems to be a difficult problem. 
Explicit computations have been accomplished for fundamental modules and partially for Kirillov--Reshetikhin modules.
See \cite[Appendix~A]{KKOPd} and \cite{OS} for a list of known computations.
Beyond these special classes, no systematic computations are known at this moment as far as the author understands. 
The purpose of this paper is to provide a description of the pole orders for another class of simple modules in relation with the cluster structure of $\Cc$.

\subsection{}
To state our main result in a precise manner, we need additional terminologies. 
From now on, we assume that $\fg$ is of type ADE, and let $Q$ be a Dynkin quiver of the same type as $\fg$.
In their seminal works \cite{HL, HL2}, Hernandez--Leclerc introduced a certain monoidal subcategory $\Cc_1$ of $\Cc$, depending on (a height function of) the quiver $Q$, which we call the \textit{level-one subcategory}.
They conjectured, and verified in several cases, that it gives a \textit{monoidal categorification} of a cluster algebra $A $ of finite ADE type (the same type as $\fg$), in the sense that there exists a ring isomorphism 
\[\tchi_q \colon K(\Cc_1) \xrightarrow{\simeq} A\]
from {the} Grothendieck ring $K(\Cc_1)$ to the cluster algebra $A$ through which the basis of $K(\Cc_1)$ formed by the simple isomorphism classes corresponds to the basis of $A$ formed by the cluster monomials.
The isomorphism $\tchi_q$ is given explicitly by the truncated $q$-character map of \cite{HL}.
The conjecture was later verified by Nakajima \cite{NakCA} and Kimura--Qin \cite{KQ} in full generality using the perverse sheaves on Nakajima's graded quiver varieties.

On the other hand, there is another kind of categorification of $A$, sometimes called an \textit{additive categorification}.  
Here, we use the version due to Caldero--Chapoton \cite{CC} and Derksen--Weyman--Zelevinsky \cite{DWZ} in terms of \textit{decorated representations} of the Dynkin quiver $Q$.
Recall that a decorated representation of $Q$ is a pair $\mathcal{M} = (M,V)$ of a usual representation $M$ of $Q$ (over $\C$) and a finite-dimensional $\II$-graded $\C$-vector space $V$, where $\II$ is the set of vertices of $Q$.
For two such pairs $\cM = (M,V)$ and $\cM' =(M', V')$, the \textit{$E$-invariant} between them is defined to be
\begin{equation} \label{eq:Edecr} 
E(\cM, \cM') \seq \Ext^1_Q(M,M') \oplus \bigoplus_{i \in \II}\Hom_\C(M_i, V'_i). 
\end{equation}
A decorated representation $\cM$ is said to be \textit{rigid} if $E(\cM, \cM) = 0$. 
The theory of additive categorification of $A$ tells us that there is a map
\[CC \colon \{ \text{decorated representations of $Q$} \} \to A\]
called the cluster character map (a.k.a.\ Caldero--Chapoton map) which satisfies $CC(\cM \oplus \cM') = CC(\cM) \cdot CC(\cM')$ and induces a bijection between the isomorphism classes of rigid decorated representations of $Q$ and the cluster monomials without frozen factors of $A$.

Now, we are ready to state our main result. 
It describes the pole orders of the normalized $R$-matrices in $\Cc_1$ in terms of the $E$-invariants for the Dynkin quiver $Q$. 
\begin{Thm} \label{Thm:main1}
For any simple objects $L$ and $L'$ of $\Cc_1$, we have
\[ \oo(L,L') = \dim E(\cM, \cM'), \]
where $\cM$ and $\cM'$ are rigid decorated representations of $Q$ satisfying $\tchi_q(L) = CC(\cM)$ and $\tchi_q(L') = CC(\cM')$ up to frozen factors. 
\end{Thm}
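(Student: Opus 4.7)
The plan is to realize both sides geometrically on Nakajima's graded quiver variety $\qv$ associated with the height function of $Q$, and then match them through a single dimension-counting formula. As the starting point, by \cite{NakCA, KQ} the simple objects of $\Cc_1$ are realized as IC sheaves on the closures of strata of $\qv$ labelled by (isomorphism classes of) decorated representations $\cM = (M,V)$, and under $\tchi_q$ the rigid decorated representations correspond precisely to simple modules (and to cluster monomials), with the frozen factors absorbed into a fixed framing datum. This fixes the bijection $L \leftrightarrow \cM$ within which the theorem is to be proved.

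The main technical step is to express the pole order $\oo(L,L')$ as a geometric dimension on $\qv$. Following Nakajima's geometric construction of R-matrices via convolution, I would realize $R_{L,L'}(z)$ as the intertwiner between two convolutions of the IC sheaves for $L$ and $L'$, with the spectral parameter $z$ controlling a $\Gm$-equivariant twist of one factor. The pole at $z=1$ is controlled by the non-transversality of the two strata along the diagonal, and by standard purity/parity arguments for equivariant perverse sheaves on Nakajima varieties one should obtain a clean formula equating $\oo(L,L')$ with the dimension of an explicit obstruction space attached to the pair $(\cM,\cM')$.

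With such a formula in hand, the final step is to decompose the obstruction space and identify it with $E(\cM,\cM')$. The framed/unframed decomposition of $\qv$ naturally splits it into two pieces. The unframed piece recovers $\Ext^1_Q(M,M')$ via the hereditary tangent-obstruction theory of $\rep(Q,\bv)$, which is canonically the $\Ext^1_Q$-group. The framed piece, governing the interaction between $M$ and the decoration $V'$, yields $\bigoplus_{i \in \II} \Hom_\C(M_i, V'_i)$ from the linear maps between the graded framing vector spaces and $M$. The asymmetry in \eqref{eq:Edecr} reflects the asymmetry between $R_{L,L'}(z)$ and $R_{L',L}(z)$, and the two contributions correspond to the two distinct geometric sources of non-transversality.

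The principal obstacle I anticipate is the first part: identifying $\oo(L,L')$ with a geometric obstruction dimension uniformly for \emph{arbitrary} simples of $\Cc_1$, rather than only for fundamental or Kirillov--Reshetikhin modules as in \cite{KKOPd, OS}. The delicate point is the compatibility of the spectral parameter with the $\Gm$-equivariant structure on $\qv$ and the control of the perverse-sheaf theoretic poles in stalks of convolutions; this has to be done cleanly at the level of IC sheaves, not just characters. The "up to frozen factors" clause should then reduce to verifying that simples coming from frozen cluster variables are central in a strong sense (so their R-matrices are regular at $z=1$ against every simple and they contribute trivially to $E$-invariants), which follows from the standard rigidity of frozen variables and from the fact that $\Ext^1_Q$ and $\Hom_\C$ both vanish on the corresponding zero components of $\cM$.
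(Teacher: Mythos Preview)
Your proposal sketches a plausible high-level strategy but misses the two structural ingredients that make the argument go through, and in one place it asserts something that is not true in the generality you need.

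First, the geometry you invoke is not the one that carries the proof. The graded quiver variety $\qv_0(W_\xi(w))$ relevant to $\Cc_1$ is, by the Kimura--Qin observation, simply a vector space $X'(w)$, and the decisive step is to pass via the Fourier--Laumon transform to its \emph{dual} $X(w)=\Hom_{Q_\xi}(I^{w(0)},I^{w(1)})$, the space of injective copresentations. On $X(w)$ the $E$-invariant is not some obstruction space extracted by a ``framed/unframed decomposition'' of tangent directions; it is literally the normal slice $S$ to the $A(w{+}w')$-orbit through $\phi_\xi(w)\oplus\phi_\xi(w')$, and one has $\dim S=\dim E(\phi_\xi(w),\phi_\xi(w'))+\dim E(\phi_\xi(w'),\phi_\xi(w))$. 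The hyperbolic localization along $T(w')$ (not a convolution) then identifies the two deformed tensor products with $\hrH^\bullet_{T(w')}$ of $i_\pm^! j_\pm^*(\cL_{w+w'}|_S)$, and the $R$-matrix becomes the map $\eta^*$ (or $\vep^*$) induced by the adjunction $\uk_S[d_S]\to\uk_{\{\phi\}}\to\uk_S[2d_S]$. The pole order is read off as $d_S$ from the equivariant Euler class of $S$; no separate ``$\Ext^1$ piece plus $\Hom$ piece'' matching is performed.

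Second, and more seriously, the slice $S$ is transversal to orbits only under the asymmetric hypothesis that $E(\phi_\xi(w),\phi_\xi(w'))=0$ or $E(\phi_\xi(w'),\phi_\xi(w))=0$. Your hoped-for ``clean formula for arbitrary simples via standard purity/parity arguments'' does not exist: when both $E$-spaces are nonzero, $S$ has a nontrivial $T(w')$-action with both attracting and repelling directions, the slice is not a contracting neighbourhood, and the identification of $\oo$ with $d_S$ breaks down. The paper handles this by an additivity reduction: decompose $\phi_\xi(w)$ and $\phi_\xi(w')$ into indecomposable summands, use that for indecomposables one of the two $E$-spaces always vanishes (via the partial order $\le_\xi$ on $\Delta_{\ge -1}$), apply the geometric argument there, and then reassemble using the additivity $\oo(M_1\otimes M_2,N)=\oo(M_1,N)+\oo(M_2,N)$ for strongly commuting factors. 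Your proposal does not contain this reduction, and without it the argument cannot close for general pairs $(L,L')$.
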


To obtain the main result, we apply Nakajima's geometric construction of representations of $U_q(L\fg)$ \cite{Nak, NakT} and verify a slightly different but equivalent assertion (= Theorem~\ref{Thm:main}), where decorated representations are replaced with injective copresentations, following Derksen--Fei~\cite{DF}. 
Our proof is based on the key observation by Kimura--Qin \cite{KQ}, generalizing the one by Nakajima~\cite{NakCA}, that the graded quiver variety relevant to the category $\Cc_1$ is simply a vector space and its dual is identified with the space $X$ of injective copresentations of the Dynkin quiver $Q$.      
In the previous work \cite{KQ}, this fact was crucially used to relate the equivariant perverse sheaves on $X$ to the Grothendieck ring $K(\Cc_1)$ or rather its quantum deformation.
In this paper, we go one more step further to relate the geometry of $X$ directly to representations in $\Cc_1$.
Namely, we interpret the deformed tensor products of simple objects of $\Cc_1$ and the $R$-matrices between them (under a certain condition, see \S\ref{Ssec:proof}) in terms of canonical operations for the equivariant perverse sheaves on $X$.
T{hen, we find t}he $E$-invariant in question as a transversal slice in $X$.
{We remark that a similar geometric interpretation of $R$-matrices has been studied in the author's previous works \cite{Frmat, FH} for tensor products of fundamental modules.
In this paper, we study tensor products of  arbitrary simple modules in the category $\Cc_1$.}

\subsection{}
Let $x, x' \in A$ be two non-frozen cluster variables, $L, L'$ prime simple objects of $\Cc_1$, and $\cM, \cM'$ rigid indecomposable decorated representations of $Q$ satisfying $x = \tchi_q(L) = CC(\cM)$ and $x' = \tchi_q(L') = CC(\cM')$. 
The theory of additive/monoidal categorifications tells us that the following three conditions are mutually equivalent:
\begin{enumerate}
\item $x$ and $x'$ belong to a common cluster (i.e., $xx'$ is a cluster monomial);
\item $\dd(L,L') \seq \oo(L,L') + \oo(L',L) = 0$;
\item $\ee(\cM, \cM') \seq \dim E(\cM, \cM') + \dim E(\cM', \cM) = 0$.
\end{enumerate} 
The invariant $\dd(L,L')$ was originally introduced by Kashiwara--Kim--Oh--Park \cite{KKOP}. 
On the other hand, the invariant $\ee(\cM, \cM')$ was considered by Marsh--Reineke--Zelevinsky \cite{MRZ}, which is identical to Fomin--Zelevinsky's compatible degree in \cite{FZy}.  
Theorem~\ref{Thm:main1} above implies the correspondence of these two numerical characteristics: 
\begin{equation} \label{eq:d=e} 
\dd(L,L') = \ee(\cM, \cM'),
\end{equation}
which does not seem automatic from the known categorifications results.

{Thus, it would be interesting to ask the following:}
{
\begin{Ques}
Does Theorem~\ref{Thm:main1} or the equality~\eqref{eq:d=e} generalize beyond the category $\Cc_1$ to other monoidal categorifications of cluster algebras?
\end{Ques}
}
At least for Kirillov--Reshetikhin modules, known computations suggest that such a generalization is plausible \cite[\S5]{FM}. {For this case, the above question has already been formulated as \cite[Conjecture 5.17]{FM}}.   
Note that the left-hand side of \eqref{eq:d=e} makes {also} sense for graded modules over symmetric quiver Hecke algebras \cite{KKKOm} and for the coherent Satake category \cite{CW} as well.    

{We also remark that there recently appeared several studies on the connection between additive and modoidal categorifications of cluster algebras from slightly different viewpoints, including \cite{Contu} and \cite{BFL}.}

Finally, during the revision of the present paper, the author was informed that Cao establishes in the latest version of \cite{Cao} the correspondence \eqref{eq:d=e} in a general setting of monoidal categorification of cluster algebras. His proof is purely algebraic and based on a series of works by Kang, Kashiwara, Kim, Oh and Park, including \cite{KKKOm, KKOP}. 

\subsection{Organization}
The present paper is organized as follows. 
In \S\ref{Sec:alg}, we state the main theorem (= Theorem~\ref{Thm:main}) after reviewing some necessary backgrounds.
In \S\ref{Ssec:cluster}, we briefly explain its cluster theoretical interpretation to see that it is equivalent to the above Theorem~\ref{Thm:main1}.
In \S\ref{Sec:geom}, we summarize Nakajima's geometric construction of representations of $U_q(\Lfg)$.   
In the final \S\ref{Sec:proof}, we apply the materials from \S\ref{Sec:geom} to study representations in the category $\Cc_1$ and discuss the proof of the main theorem.

\section{Algebraic preliminaries and main theorem}
\label{Sec:alg}

In this section, some necessary algebraic preliminaries are recalled before we state our main theorem (= Theorem~\ref{Thm:main}) in \S\ref{Ssec:state}.
In \S\ref{Ssec:cluster}, we briefly explain a cluster theoretical interpretation of Theorem~\ref{Thm:main} to see that it is indeed equivalent to Theorem~\ref{Thm:main1} in Introduction. 

\subsection{Representations of quantum loop algebras} \label{Ssec:qloop}
Let $\fg$ be a complex simple Lie algebra and $U_q(\Lfg)$ the quantum loop algebra associated with $\fg$, which is a Hopf algebra defined over an algebraically closed field $\kk$ of characteristic $0$ with $q \in \kk^\times$ being a parameter.
In Drinfeld's presentation, it is generated by the elements $x_{i,n}^{\pm}, h_{i,m}, k_i^{\pm 1}$ with $i \in \II, n \in \Z, m \in \Z \setminus \{ 0 \}$, where $\II$ is a labeling set of simple roots of $\fg$. 
It is also defined as a subquotient of the untwisted quantum affine algebra $U_q(\hat{\fg})$, and carries a family of Chevalley generators $e_i, f_i, k_i^{\pm 1}$, where $i\in \II \sqcup\{0\}$.
 In this paper, we use the coproduct $\Delta$ of $U_q(L\fg)$ given by the formula
\[\Delta(e_i) = e_i \otimes k_i^{-1} + 1 \otimes e_i, \quad 
\Delta(f_i) = f_i \otimes 1 + k_i \otimes f_i, \quad 
\Delta(k_i^{\pm 1}) = k_i^{\pm 1} \otimes k_i^{\pm 1}\]
for each $i\in \II \sqcup\{0\}$.
This is the same convention as in \cite{KKKO, KKOP} for example.

Throughout this paper, the quantum parameter $q$ is assumed to be algebraically independent over $\Q$.
Let $\Cc$ be the category of finite-dimensional modules over $U_q(\Lfg)$ of type $\mathbf{1}$. 
This is a $\kk$-linear monoidal abelian category.
We often abbreviate $\otimes_\kk$ as $\otimes$.

For objects $M,N \in \Cc$, we say that $M$ and $N$ \textit{commute} if the tensor product $M \otimes N$ is isomorphic to the opposite product $N \otimes M$.
In the category $\Cc$, there are many pairs of objects which do not commute.
Nevertheless, the Grothendieck ring $K(\Cc)$ of $\Cc$ is known to be a commutative domain \cite{FR}. 
This particularly implies that two simple modules $M$ and $N$ commute if $M \otimes N$ is simple. 
In this case, we say that $M$ and $N$ \textit{strongly commute}.

The isomorphism classes of simple modules in the category $\Cc$ are parameterized by the multiplicative monoid $(1 + z\kk[z])^\II$ of $\II$-tuples of monic polynomials, called the {Drinfeld polynomials} \cite[Ch.~12]{CP}.
We denote by $L(\varpi)$ a simple module in $\Cc$ corresponding to $\varpi \in (1 + z\kk[z])^\II$.
In the terminology of \cite{CP}, it is an $\ell$-highest weight module of $\ell$-highest weight $\varpi$.
In particular, $L(\varpi)$ has a distinguished generating vector, called an $\ell$-highest weight vector, uniquely up to multiple in $\kk^\times$.
Note that the monoid $(1+z\kk[z])^\II$ is generated by the elements $\varpi_{i,a} \seq ((1-az)^{\delta_{i,j}})_{j \in \II}$ for $(i,a) \in I \times \kk^\times$, where $\delta_{i,j}$ is the Kronecker delta.

\subsection{Normalized R-matrices}\label{Ssec:Rmat}
Let $z$ be an indeterminate. 
For an object $M \in \Cc$, we can define a new action of $U_q(\Lfg)$ on the $\kk[z^{\pm 1}]$-module $M[z^{\pm 1}] \seq M \otimes \kk[z^{\pm 1}]$ by the formula: 
\[ x_{i, n}^{\pm}(v\otimes a) \seq x_{i,n}^{\pm}v \otimes z^{n}a, \quad 
k_i (v\otimes a) \seq k_iv \otimes a, \quad 
h_{i,m} (v\otimes a) \seq h_{i,m}v \otimes z^{m}a, \]
where $v \in M, a \in \kk[z^{\pm 1}]$.
The resulting $U_q(\Lfg)[z^{\pm 1}]$-module $M[z^{\pm 1}]$ is called the \textit{affinization} of $M$.
We set $M(z) \seq M[z^{\pm 1}] \otimes_{\kk[z^{\pm 1}]} \kk(z)$.
In what follows, we sometimes identify the subspace $M \otimes 1$ of $M[z^{\pm 1}]$ with $M$. 

For a pair $(M,N)$ of simple modules in $\Cc$,  with fixed $\ell$-highest weight vectors $v_M \in M$ and $v_N \in N$, the $U_q(\Lfg)(z)$-modules $M \otimes N(z) $ and $N(z) \otimes M$ are known to be simple {(cf.\ \cite[Theorem 4.4]{Chari})}, and therefore we have a unique isomorphism 
\[ R_{M,N}(z) \colon M \otimes N(z) \to N(z) \otimes M\]
satisfying $R_{M,N}(z)(v_M \otimes v_N) = v_N \otimes v_M$.
It is called the \textit{normalized $R$-matrix} between $M$ and $N$.
Viewing it as a matrix-valued rational function in $z$, one can talk about the order of its poles, which does not depend on the choice of $\ell$-highest weight vectors $v_M$ and $v_N$.
We define a non-negative integer $\oo(M,N)$ to be the pole order of $R_{M,N}(z)$ at $z=1$, and set
\[ \dd(M,N) \seq \oo(M,N) + \oo(N,M). \]
This is the same as the invariant introduced in \cite{KKOP} (cf.~\cite[Proposition~3.16]{KKOP}). 
One may understand that the number $\dd(M,N)$ measures the non-commutativity between $M$ and $N$ as the following proposition suggests. 
We say that a simple module in $\Cc$ is \textit{real} if it strongly commutes with itself. 
\begin{Prop}[{\cite[Corollary~3.17]{KKOP}}]
\label{Prop:KKOP}
Let $M$ and $N$ be simple modules in $\Cc$. 
Assume that at least one of them is real.
Then $M$ and $N$ strongly commute if and only if $\dd(M,N) = 0$.
\end{Prop}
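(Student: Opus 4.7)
The plan is to work with the ``renormalized'' morphism
\[ \mathbf{r}_{M,N} \seq \bigl((z-1)^{\oo(M,N)} R_{M,N}(z)\bigr)\big|_{z=1} \colon M \otimes N \to N \otimes M, \]
which by construction is a nonzero $U_q(\Lfg)$-homomorphism. The vanishing $\oo(M,N) = 0$ is equivalent to $R_{M,N}(z)$ being regular at $z=1$ with $\mathbf{r}_{M,N}(v_M \otimes v_N) = v_N \otimes v_M$; if instead $\oo(M,N) > 0$, then $v_M \otimes v_N$ lies in $\ker(\mathbf{r}_{M,N})$. Once this dichotomy is in place, the proposition splits naturally into two implications.

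For the ``only if'' direction, suppose $M \otimes N$ is simple; then $N \otimes M$ is simple as well, since $[N \otimes M] = [M \otimes N]$ in the commutative ring $K(\Cc)$. Because $\mathbf{r}_{M,N}$ is a nonzero morphism with simple source, it is injective, so $v_M \otimes v_N \notin \ker(\mathbf{r}_{M,N})$, forcing $\oo(M,N) = 0$ by the above dichotomy. Symmetrically $\oo(N,M) = 0$. Notably, this implication does not require the reality assumption.

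For the converse, assume $\oo(M,N) = \oo(N,M) = 0$, so both $R_{M,N}(z)$ and $R_{N,M}(z)$ are regular at $z=1$. The unitarity relation $R_{N,M}(z^{-1}) \circ R_{M,N}(z) = \id$, which is forced by the highest-weight-vector normalization together with simplicity over $\kk(z)$, specializes at $z=1$ to $\mathbf{r}_{N,M} \circ \mathbf{r}_{M,N} = \id_{M \otimes N}$; hence $\mathbf{r}_{M,N} \colon M \otimes N \xrightarrow{\simeq} N \otimes M$ is an isomorphism. It remains to upgrade ``isomorphic'' to ``simple''. Assume without loss of generality that $M$ is real. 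The key structural input from the theory of R-matrices is that when $M$ is real, $M \otimes N$ has a unique simple head and the image of $\mathbf{r}_{M,N}$ inside $N \otimes M$ coincides with both this head and the unique simple socle of $N \otimes M$. Since $\mathbf{r}_{M,N}$ is surjective, the simple socle of $N \otimes M$ exhausts all of $N \otimes M$, so $N \otimes M$, and hence $M \otimes N$, is simple.

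The main obstacle is the final step that identifies the image of $\mathbf{r}_{M,N}$ with a unique simple socle: this is where the realness hypothesis genuinely enters and cannot be bypassed, since without it the tensor product $N \otimes M$ could carry several distinct simple submodules in its socle, and bijectivity of the specialized R-matrix would no longer suffice to establish simplicity. Everything else (nonvanishing of $\mathbf{r}_{M,N}$, the unitarity relation, and the kernel argument) is essentially formal once the behaviour of the normalized R-matrix at $z=1$ is made explicit.
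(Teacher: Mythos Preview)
The paper does not prove this proposition; it merely cites \cite[Corollary~3.17]{KKOP} and moves on. Your proposal is a faithful reconstruction of the argument underlying that reference (together with the head/socle results of \cite{KKKO}), and it is correct as written: the dichotomy on $\ker(\mathbf{r}_{M,N})$, the injectivity argument for the ``only if'' direction, the unitarity specialization, and the appeal to the simple-head/simple-socle theorem for real modules are exactly the ingredients used in the original source. One small remark: you could streamline the ``if'' direction by noting that once $\mathbf{r}_{M,N}$ is an isomorphism and the image of $\mathbf{r}_{M,N}$ is known to be simple (this alone is what \cite{KKKO} provides when $M$ is real), simplicity of $M\otimes N$ is immediate without separately invoking the socle description.
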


The following property is used later in \S\ref{Ssec:proof}. 
\begin{Lem}[cf.\ {\cite[Corollary~3.11(ii)]{KKKO}}]
\label{Lem:omult}
Let $M_1, M_2, N$ be simple modules in $\Cc$.
Assume that $M_1$ and $M_2$ strongly commute. 
Then, we have 
\begin{align*}
\oo(M_1\otimes M_2, N) &= \oo(M_1, N) + \oo(M_2, N), \\
\oo(N,M_1\otimes M_2) &= \oo(N,M_1) + \oo(N, M_2).
\end{align*}
\end{Lem}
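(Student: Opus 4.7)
The plan is to derive both identities from a hexagonal factorization of the normalized $R$-matrix. Since $M_1$ and $M_2$ strongly commute, $M_1 \otimes M_2$ is a simple object of $\Cc$ with $\ell$-highest weight vector $v_{M_1} \otimes v_{M_2}$, so the normalized $R$-matrix $R_{M_1 \otimes M_2, N}(z)$ is defined. First, I would establish the identity
\[ R_{M_1 \otimes M_2, N}(z) = (R_{M_1, N}(z) \otimes \id_{M_2}) \circ (\id_{M_1} \otimes R_{M_2, N}(z)) \]
by the uniqueness characterization of the normalized $R$-matrix: both sides are $U_q(\Lfg)(z)$-linear isomorphisms between the simple $U_q(\Lfg)(z)$-modules $(M_1 \otimes M_2) \otimes N(z)$ and $N(z) \otimes (M_1 \otimes M_2)$, and both send $v_{M_1} \otimes v_{M_2} \otimes v_N$ to $v_N \otimes v_{M_1} \otimes v_{M_2}$; hence they coincide by Schur's lemma over $\kk(z)$. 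The symmetric factorization $R_{N, M_1 \otimes M_2}(z) = (\id_{M_1} \otimes R_{N, M_2}(z)) \circ (R_{N, M_1}(z) \otimes \id_{M_2})$ is obtained in the same way and yields the second equation of the lemma.

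The upper bound $\oo(M_1 \otimes M_2, N) \le \oo(M_1, N) + \oo(M_2, N)$ is immediate from the factorization, since a composition of rational functions with poles of orders $m_i \seq \oo(M_i, N)$ at $z = 1$ has a pole of order at most $m_1 + m_2$ there. To obtain the reverse inequality, I introduce the leading-term specializations
\[ r_i \seq \lim_{z \to 1}(z-1)^{m_i} R_{M_i, N}(z) \in \Hom_{U_q(\Lfg)}(M_i \otimes N,\, N \otimes M_i), \]
each of which is non-zero by definition of $m_i$. The factorization specializes to
\[ \lim_{z \to 1}(z-1)^{m_1 + m_2} R_{M_1 \otimes M_2, N}(z) = (r_1 \otimes \id_{M_2}) \circ (\id_{M_1} \otimes r_2), \]
so the matter reduces to checking that this composition of leading-term maps does not vanish.

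The non-vanishing of $(r_1 \otimes \id_{M_2}) \circ (\id_{M_1} \otimes r_2)$ is the main obstacle, since a priori the leading-order terms could cancel. I would address it by a non-cancellation argument: the image of $\id_{M_1} \otimes r_2$ contains $M_1 \otimes \mathrm{Im}(r_2)$, and one shows this subspace cannot lie in $\ker(r_1) \otimes M_2$ by pairing it against a suitable $\ell$-weight component of $\mathrm{Im}(r_2)$ where $r_1$ is known to act non-trivially. A cleaner alternative, closer to \cite{KKKO}, is to work throughout with the denominator polynomials $d_{M,N}(z)$ and establish the multiplicativity $d_{M_1 \otimes M_2, N}(z) = d_{M_1, N}(z) \cdot d_{M_2, N}(z)$ directly via the factorization combined with a minimal-polynomial characterization; reading off the order at $z = 1$ then yields the first equality of the lemma, and the second follows from the symmetric factorization of $R_{N, M_1 \otimes M_2}(z)$.
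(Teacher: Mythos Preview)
The paper itself does not prove this lemma; it is recorded with a citation to \cite[Corollary~3.11(ii)]{KKKO}. Your hexagonal factorization of $R_{M_1 \otimes M_2, N}(z)$ via uniqueness over $\kk(z)$ is correct, and the upper bound $\oo(M_1 \otimes M_2, N) \le \oo(M_1,N) + \oo(M_2,N)$ follows at once.

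The genuine gap is in the lower bound. You correctly isolate the crux as the non-vanishing of $(r_1 \otimes \id_{M_2}) \circ (\id_{M_1} \otimes r_2)$, but neither of your sketches settles it. Approach~(a) does not work as written: when $\oo(M_i,N) > 0$ the leading term $r_i$ annihilates $v_{M_i} \otimes v_N$, so there is no distinguished $\ell$-weight vector in $\Image(r_2) \subset N \otimes M_2$ on which $r_1$ is visibly nonzero, and $\Image(r_2)$ need not contain any simple tensor that one could feed directly into $r_1$. Approach~(b) is circular: the factorization yields only the divisibility $d_{M_1 \otimes M_2, N} \mid d_{M_1,N}\, d_{M_2,N}$, whereas the reverse divisibility is equivalent to the very non-cancellation you are trying to prove, so appealing to a ``minimal-polynomial characterization'' of $d_{M,N}$ does not bypass the issue. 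The argument in \cite{KKKO} supplies the missing input through the comparison of the normalized $R$-matrix with the universal $R$-matrix and its normalizing coefficient, whose multiplicativity under tensor product is automatic from the hexagon axiom and translates directly into the additivity of pole orders; without importing some such structural ingredient, the equality cannot be extracted from the factorization and elementary considerations alone.
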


\begin{Rem} \label{Rem:Mu}
For our purpose, it is convenient to have the following characterization of the number $\oo(M,N)$, {which immediately follows from the definition through the Laurent expansion at $z=1$}.

Let us introduce another indeterminate $u$ and consider the ring $\kk[\![u]\!]$ of formal power series. 
Viewing $\kk[z^{\pm1}]$ as a subring of $\kk[\![u]\!]$ by $z = e^u$, we define the infinitesimal deformation of $M$ to be
\[ M[\![u ]\!] \seq M[z^{\pm 1}] \otimes_{\kk[z^{\pm 1}]} \kk[\![u]\!]. \]
This is a $U_q(\Lfg)[\![u]\!]$-module.
By localization, we also get a $U_q(\Lfg)(\!(u)\!)$-module $M(\!(u)\!)$.
Note that $M[\![u ]\!]$ is a $\kk[\![u]\!]$-lattice of $M(\!(u)\!)$.  

For simple modules $M$, $N$ in $\Cc$, the normalized $R$-matrix $R_{M,N}(e^u)$ induces an isomorphism 
\[ \wh{R}_{M,N} \colon M \otimes N(\!(u)\!) \to N(\!(u)\!) \otimes M \]
of $U_q(\Lfg)(\!(u)\!)$-modules satisfying $\wh{R}_{M,N}(v_M \otimes v_N) = v_N \otimes v_M$.  
Then, the number $\oo(M,N)$ is equal to the non-negative integer $d$ such that we have
\[ u^d \wh{R}_{M,N}(M \otimes N[\![u]\!]) \subset N[\![u ]\!] \otimes M\]
and the specialization $(u^d \wh{R}_{M,N}) |_{u=0} \colon M \otimes N \to N \otimes M$ is non-zero.
\end{Rem}

\subsection{E-invariants}
Let $Q$ be an acyclic quiver.
We denote by $\rep \C Q$ the category of finite-dimensional representations of $Q$ over $\C$ and by $\inj\C Q$ the full subcategory of $\rep\C Q$ consisting of injective representations. 
Let $C^2(\inj \C Q)$ be the category of morphisms $\phi \colon I^{(0)} \to I^{(1)}$ in $\inj\C Q$.
We refer to an object of $C^2(\inj \C Q)$ as an injective copresentation of $Q$.
We regard an object of $C^2(\inj \C Q)$ as a cochain complex concentrated in the cohomological degrees $0$ and $1$. For any $\phi, \psi \in C^2(\inj \C Q)$, the $E$-invariant between them is defined to be the vector space
\[E(\phi, \psi) \seq \Hom_{K^b(\inj \C Q)}(\phi, \psi[1]), \]
where $K^b(\inj\C Q)$ is the homotopy category of bounded complexes in $\inj \C Q$ and $[1]$ is the shift functor.
This is a finite-dimensional $\C$-vector space.
Note that $K^b(\inj\C Q)$ is naturally equivalent to the derived category $D^b(\rep\C Q)$ and that $\phi$ is isomorphic to $\Ker\phi[0] \oplus \Cok\phi[-1]$ in $D^b(\rep\C Q)$. 
Since $\C Q$ is hereditary, quotients of injective modules are injective. 
In particular, $\Cok \phi$ belongs to $\inj \C Q$.
Therefore, we have
\begin{equation} \label{eq:Einv}
E(\phi, \psi) \simeq \Ext^1_Q(\Ker \phi, \Ker\psi) \oplus \Hom_{Q}(\Ker\phi, \Cok\psi).
\end{equation}

\subsection{Main theorem}
\label{Ssec:state}

In what follows, we assume that $\fg$ is of simply-laced type (i.e., type $\mathrm{ADE}$).
An integer-valued function $\xi \colon \II \to \Z$ is called a \textit{height function} if it satisfies $|\xi(i)-\xi(j)| = 1$ whenever $i$ and $j$ are adjacent in the Dynkin diagram of $\fg$.
A height function $\xi$ defines a Dynkin quiver $Q_\xi$ of the same type as $\fg$ in the following way.
The set of vertices of $Q_\xi$ is $\II$.
For an adjacent pair $(i,j)$ in $\II$, we have an arrow $i \to j$ in $Q_\xi$ if $\xi(i) = \xi(j) +1$. 
Note that we have $Q_\xi = Q_{\xi'}$ if and only if the difference $\xi - \xi'$ is constant.
Any Dynkin quiver arises from a height function in this way.

\begin{Ex} \label{Ex:QA3}
When $\fg$ is of type $\mathrm{A}_3$, the function $\xi$ given by $\xi(i)=i$ under the standard identification $\II = \{1,2,3 \}$ is a height function. 
The associated quiver $Q_\xi$ is depicted as $Q_\xi = (\xymatrix{ \overset{1}{\circ} & \ar[l] \overset{2}{\circ} & \ar[l] \overset{3}{\circ}})$.
\end{Ex}

Throughout this paper, we fix a height function $\xi$. 
For each $i \in \II$, 
let $S_i \in \rep \C Q_\xi$ be the simple representation at $i$, and
$I_i \in \inj \C Q_\xi$ an injective hull of $S_i$.
Let $\N \seq \Z_{\ge 0}$. 
For each pair $w = (w(0), w(1)) \in \NII = \N^\II \times \N^\II$ of $I$-tuples of non-negative integers,
we set 
\[I^{w(0)} \seq \bigoplus_{i \in I}I_i^{\oplus w_i(0)}, \quad 
I^{w(1)} \seq \bigoplus_{i \in I}I_i^{\oplus w_i(1)}, \quad 
X(w) \seq \Hom_{Q_\xi}(I^{w(0)}, I^{w(1)}),
\]
where $w(k) = (w_i(k))_{i \in I} \in \N^I$ for $k = 0,1$.
The automorphism group 
\[A(w) \seq \Aut_{Q_\xi}(I^{w(0)}) \times \Aut_{Q_\xi}(I^{w(1)})\] 
acts on the vector space $X(w)$ in the natural way.
Since $Q_\xi$ is of finite representation type, there are only finitely many $A(w)$-orbits in $X(w)$ \cite[Corollary~2.6]{DF}.
In particular, there exists a unique open orbit.

\begin{Def} \label{Def:phiw}
For each $w \in \NII$, we denote by 
\[\phi_\xi(w) \colon I^{w(0)} \to I^{w(1)}\] 
an injective copresentation in the unique open $A(w)$-orbit in $X(w)$.  
By definition, it is unique up to $A(w)$-congugation.
\end{Def}

{We say that an injective copresentation $\phi$ is \textit{rigid} if $E(\phi, \phi) = 0$.
By \cite[p.215]{DF}, $\phi \in X(w)$ is rigid if and only if $\phi \simeq \phi_\xi(w)$ (see also the second paragraph of \S\ref{Ssec:slice} below).
Therefore, the set $\{ \phi_\xi(w) \mid w\in \NII\}$ gives a complete system of rigid injective copresentations of $Q_\xi$. }

On the other hand, to each $w = (w(0), w(1)) \in \NII$, we associate a simple $U_q(L\fg)$-module $L_\xi(w)$ in the category $\Cc$ by
\begin{equation} \label{eq:Lxi}
L_\xi(w) \seq L\left(\prod_{i \in I}\varpi_{i,q^{\xi(i)}}^{w_i(0)} \varpi_{i,q^{\xi(i)+2}}^{w_i(1)}\right).
\end{equation}
\begin{Def}[{Hernandez--Leclerc \cite{HL, HL2}}]
The \textit{level-one subcategory} $\Cc_{\xi,1}$ is defined to be the Serre subcategory of $\Cc$ generated by the simple modules $L_\xi(w)$ for $w \in \NII$.  
\end{Def}
By \cite[Lemma~3.2]{HL2}, the category $\Cc_{\xi, 1}$ is a monoidal subcategory of $\Cc$.

The main theorem of this paper is the following.

\begin{Thm} \label{Thm:main}
For any height function $\xi$ and $w, w' \in \NII$, we have
\[ \oo(L_\xi(w), L_\xi(w')) = \dim E(\phi_\xi(w), \phi_\xi(w')).\]
\end{Thm}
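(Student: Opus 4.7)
The plan is to translate both sides of the claimed equality into geometric invariants on the affine spaces $X(w)$ of injective copresentations, following Nakajima's~\cite{Nak, NakT} and Kimura--Qin's~\cite{KQ} geometric realization of the level-one subcategory $\Cc_{\xi,1}$. Under this realization the simple module $L_\xi(w)$ is categorified by the intersection cohomology sheaf of the open $A(w)$-orbit in $X(w)$---which, since that orbit is dense and $X(w)$ is smooth, is a shift of the constant sheaf $\uk_{X(w)}$---while tensor products are realized through a convolution-type operation associated with the direct-sum embedding $X(w)\oplus X(w')\hookrightarrow X(w+w')$.

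Next, using the infinitesimal reformulation in Remark~\ref{Rem:Mu}, I would reinterpret $\oo(L_\xi(w),L_\xi(w'))$ as the $u$-adic order by which the normalized $R$-matrix must be multiplied in order to preserve the canonical $\kk[\![u]\!]$-lattices. The deformation parameter $z = e^u$ corresponds geometrically to a $\Gm$-action scaling the second factor, promoting $L_\xi(w)\otimes L_\xi(w')(\!(u)\!)$ to a $\Gm$-equivariant family over $X(w)\oplus X(w')$. Under the geometric dictionary, the normalized $R$-matrix and its opposite become canonical comparison maps between two $\Gm$-equivariant perverse sheaves on $X(w+w')$, and their pole order at $z=1$ should translate into a discrepancy in equivariant cohomology degrees, measured by the codimension in $X(w+w')$ of the orbit $A(w+w')\cdot(\phi_\xi(w)\oplus\phi_\xi(w'))$ at the base point $\phi_\xi(w)\oplus\phi_\xi(w')$.

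The decisive identification is then a transversal slice calculation: a transversal slice in $X(w+w')$ to this orbit at $\phi_\xi(w)\oplus\phi_\xi(w')$ has dimension equal to $\dim E(\phi_\xi(w),\phi_\xi(w'))$. This follows from a standard deformation argument identifying transversal deformations of $\phi_\xi(w)\oplus\phi_\xi(w')$ with $\Hom_{K^b(\inj\C Q_\xi)}(\phi_\xi(w),\phi_\xi(w')[1])$, which by \eqref{eq:Einv} unfolds into the $\Ext^1$-term between the kernels plus a $\Hom$-term between kernel and cokernel---exactly the defining formula for $E$. To pass from prime simples to arbitrary $L_\xi(w),L_\xi(w')$ I would invoke Lemma~\ref{Lem:omult} on the representation side, matched to the evident additivity of $E$ with respect to direct summands of injective copresentations on the quiver side.

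The main obstacle will lie in the second paragraph: making the perverse-sheaf interpretation of the normalized $R$-matrix \emph{precise} and showing that the resulting $u$-adic order equals the transversal-slice dimension on the nose, rather than merely providing an inequality. One must carefully track how the formal parameter $u$ of Remark~\ref{Rem:Mu} is identified with the equivariant parameter of the loop-rotation $\Gm$, how the $\ell$-highest weight vectors fix the normalization, and how the canonical morphism between the relevant equivariant perverse sheaves detects exactly the transversal-slice codimension. The introduction's qualification ``under a certain condition'' suggests that the direct geometric construction of the $R$-matrix may require primality of one factor, with the general case deduced only afterward from Lemma~\ref{Lem:omult} and the additivity of $E$.
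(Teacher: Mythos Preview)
Your strategy coincides with the paper's, but there is a concrete error in the slice identification. The transversal slice in $X(w+w')$ to the orbit through $\phi=\phi_\xi(w)\oplus\phi_\xi(w')$ is $E(\phi,\phi)$, and by rigidity of the summands this is the \emph{symmetric} sum $E(\phi_\xi(w),\phi_\xi(w'))\oplus E(\phi_\xi(w'),\phi_\xi(w))$, not just $E(\phi_\xi(w),\phi_\xi(w'))$. So the orbit codimension computes $\dd$, not $\oo$. The asymmetry has to come from somewhere else, and in the paper it comes from the $T(w')$-weights on the slice: one splits $S=\phi+(E^+\oplus E^-)$ with $E^\pm\subset X(w,w')^\pm$ projecting isomorphically onto the two cross $E$-invariants, and the hyperbolic localization picks out one half. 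The ``certain condition'' you correctly anticipate is precisely the one-sided vanishing $E(\phi_\xi(w),\phi_\xi(w'))=0$ or $E(\phi_\xi(w'),\phi_\xi(w))=0$, under which $S=S^\mp$; only then does the paper identify $\wh R_{L,L'}$ with the pullback along the adjunction morphism $\vep$ or $\eta$ between $\uk_S[d_S]$ and $\uk_{\{\phi\}}$, yielding $\oo=d_S$.

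A second missing ingredient is how to get the pole order \emph{exactly} rather than as an upper bound. Knowing that $\wh R$ corresponds to $\eta^*$ gives $u^{d_S}\wh R$ lattice-preserving, but to see $(u^{d_S}\wh R)|_{u=0}\neq 0$ one needs the non-equivariant $\eta^*\colon\Hom^\bullet(\uk_{\{\phi\}},\cL_{w+w'}|_S)\to\Hom^\bullet(\uk_S[d_S],\cL_{w+w'}|_S)$ to be nonzero. This holds if and only if the skyscraper $\uk_{\{\phi\}}$ occurs as a summand of $\cL_{w+w'}|_S$, which the paper proves separately (Proposition~\ref{Prop:key}) by a nontrivial argument tracing the Fourier--Laumon transform back to the annihilator bundle over a quiver Grassmannian and exhibiting an explicit dense open subset mapping onto the orbit $A(w+w')\phi$. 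This step does not fall out of general formalism and is absent from your sketch. After this, the reduction to indecomposables via Remark~\ref{Rem:order}, Lemma~\ref{Lem:omult}, and additivity of $E$ proceeds as you outline.
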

A proof is given later in \S\ref{Ssec:proof}.

 \subsection{Cluster theoretical interpretation}
 \label{Ssec:cluster}
In this subsection, we briefly explain the equivalence between Theorem~\ref{Thm:main1} and Theorem~\ref{Thm:main}.
It amounts to giving a cluster theoretical interpretation of Theorem~\ref{Thm:main}.

\subsubsection{Cluster algebras}
First, we fix our notation around the finite type cluster algebras.
Recall that we have fixed a height function $\xi \colon \II \to \Z$.
Let $\II' \seq \{ i' \mid i \in \II\}$ be a copy of the set $\II$, which serves the set of frozen indices.
Let $A_\xi$ be the cluster algebra of geometric type associated with the exchange matrix $\tB = (b_{ij})_{i \in \II \sqcup \II', j \in \II}$ given by 
\[ b_{ij} \seq n_{ij} - n_{ji}, \quad b_{i'j} = \delta_{i,j} - n_{ij} \]
for $i,j \in \II$,  
where $n_{ij}$ denotes the number of arrows from $i$ to $j$ of the quiver $Q_\xi$, and $\delta_{i,j}$ is the Kronecker delta.
By the Laurent phenomenon, $A_\xi$ is the subring generated by all the cluster variables inside the ring of Laurent polynomials in the initial cluster variables $\{ x_i \mid i \in \II \sqcup \II' \}$. See \cite{FZ1}.

Let $\Delta^+$ denote the set of positive roots of $\fg$ and $\alpha_i \in \Delta^+$ the $i$-th simple root. 
Since $A_\xi$ is of finite type, the set of non-frozen cluster variables of $A_\xi$ is finite and in bijection with the set of almost positive roots $\Delta_{\ge -1} \seq \Delta^+ \cup \{ -\alpha_i \mid i \in \II\}$. See \cite{FZ2}. 
Let $x[\alpha]$ denote the cluster variable corresponding to $\alpha \in \Delta_{\ge -1}$.
For example, we have $x[-\alpha_i] = x_i$ and $x[\alpha_i] = x^{-1}_i(\prod_{j \in \II} x_j^{n_{ij}}x_{j'}^{n_{ji}} + x_{i'} \prod_{j \in \II}x_j^{n_{ji}})$ for each $i \in \II$.
For a positive root $\alpha = \sum_{i \in \II}a_i \alpha_i$,  the cluster variable $x[\alpha]$ is the one having $\prod_{i \in \II}x_i^{a_i}$ as its denominator.
The cluster variables $x[\alpha] \ (\alpha \in \Delta_{\ge -1})$, $x_{i'} \ ( i \in \II)$ are grouped into several subsets of constant cardinality $2 |\II|$, called the clusters.
A cluster always contains the frozen variables $\{ x_{i'} \mid i \in \II\}$. 
A monomial of cluster variables of a common cluster is called a cluster monomial.     
The cluster monomials form a {$\Z$-linearly independent subset} of $A_\xi$, and equivalently, the cluster monomials without frozen factors form a {$\Z[x_{i'} \mid i \in \II]$-linearly independent subset} of $A_\xi$ {by \cite[Theorem 11.2]{FZ4}}.  

\subsubsection{Additive categorification}
\label{Sssec:decr}
For $M \in \rep \C Q_\xi$, its dimension vector is $\vdim M \seq \sum_{i \in \II} (\dim M_i) \alpha_i$.  
By Gabriel's theorem, for each $\alpha \in \Delta^+$, there exists an indecomposable object $M_\xi[\alpha] \in \rep \C Q_\xi$ uniquely up to isomorphism satisfying $\vdim M_\xi[\alpha] = \alpha$, and the set $\{ M_\xi[\alpha] \mid \alpha \in \Delta_{\ge -1}\}$ gives a complete system of indecomposable objects of $\rep \C Q_\xi$.
For $v = (v_i)_{i \in \II} \in \N^\II$, we set $\C^v \seq \bigoplus_{i \in \II} \C^{v_i}$.
Recall that a decorated representation of $Q_\xi$ is a pair $\mathcal{M} = (M,V)$ of $M \in \rep \C Q_\xi$ and a finite-dimensional $\II$-graded $\C$-vector space $V$.
We set $\cM_\xi[\alpha] \seq (M_\xi[\alpha], 0)$ for $\alpha \in \Delta^+$ and $\cM_\xi[-\alpha_i] \seq (0,\C^{\delta_i})$ for $i \in \II$, where $\delta_i = (\delta_{i,j})_{j \in \II} \in \N^\II$ is the delta function at $i$.  
Then, the set $\{\cM_\xi[\alpha] \mid \alpha \in \Delta_{\ge -1} \}$ gives a complete system of indecomposable decorated representations of $Q_\xi$.

For a decorated representation $\cM = (M,V)$ of $Q_\xi$, its cluster character $CC(\cM)$ is defined as in \cite{DWZ}:
\[ CC(\cM) \seq \sum_{v \in \N^\II} \chi(\Gr_v(M)) {\prod_{i \in \II \sqcup \II'} x_{i}^{\tilde{g}_i(\cM)-\sum_{j \in \II}b_{ij} v_j},} \]
where $\chi(\Gr_v(M))$ is the Euler characteristic of the submodule Grassmannian $\Gr_v(M)$, i.e., the complex projective variety  parametrizing subrepresentations of $M$ of dimension vector $\sum_{i \in \II}v_i \alpha_i$, and $(\tilde{g}_i(\cM))_{i \in \II \sqcup \II'}$ is the so-called extended $g$-vector of $\cM$. 
In our case, it is explicitly written as
\[ \tilde{g}_i(\cM) = \dim V_i - \dim M_i + \sum_{j \in \II} n_{ij} \dim M_j, \quad \tilde{g}_{i'}(\cM) = \dim\bigcap_{a}\Ker(a|_M) \]
for each $i \in \II$, where $a$ runs over the set of arrows of $Q_\xi$ whose source is $i$. 
By \cite{CC, DWZ}, we have $CC(\cM \oplus \cM') = CC(\cM) \cdot CC(\cM')$ for any decorated representations $\cM, \cM'$, and $CC(\cM_\xi[\alpha]) = x[\alpha]$ for all $\alpha \in \Delta_{\ge -1}$.
Recall the $E$-invariant for decorated representations defined in \eqref{eq:Edecr}. 
By \cite{MRZ}, two cluster variables $x[\alpha]$ and $x[\alpha']$ belong to a common cluster if and only if we have $E(\cM_\xi[\alpha], \cM_\xi[\alpha']) = E(\cM_\xi[\alpha'], \cM_\xi[\alpha]) =0$.
The map $CC$ gives a bijection between the isomorphism classes of rigid decorated representations of $Q_\xi$ and the cluster monomials without frozen factors of $A_\xi$.

\begin{Rem} \label{Rem:order}
The following remark is used later in \S\ref{Ssec:proof}. 
It is well known that there is a partial ordering $\le_\xi$ of the set $\Delta^+$ with the following property: we have $\alpha \le_\xi \alpha'$ if $\Ext^1_{Q_\xi}(M_\xi[\alpha], M_\xi[\alpha']) \neq 0$. 
We can extend it to the set $\Delta_{\ge -1}$ so that we have $\alpha \le_\xi - \alpha_i$ for any $\alpha \in \Delta^+$ and $i \in \II$. 
Then,  for $\alpha, \alpha' \in \Delta_{\ge -1}$, we have $\alpha \le_\xi \alpha'$ whenever $E(\cM_\xi[\alpha], \cM_\xi[\alpha']) \neq 0$.   
\end{Rem}

\subsubsection{Interpretation by injective copresentations}
\label{Sssec:injcopres}
To each $\phi \in C^2(\inj \C Q_\xi)$, we assign a decorated representation $\cM(\phi)$ of $Q_\xi$ by 
\[ \cM(\phi) \seq (\Ker \phi, \C^c), \quad \text{where $c_i \seq \dim \Hom_{Q_\xi}(S_i, \Cok \phi)$}.\]
Comparing \eqref{eq:Edecr} and \eqref{eq:Einv}, we find that 
\begin{equation} \label{eq:EE}
E(\phi, \psi) \simeq E(\cM(\phi), \cM(\psi))
\end{equation} holds for any $\phi, \psi \in C^2(\inj \C Q_\xi)$.
For each $\alpha \in \Delta^+$, let $\phi_\xi[\alpha]$ be a minimal injective resolution of $M_\xi[\alpha]$.
For each $i \in \II$, we set $\phi_\xi[-\alpha_i] \seq (0 \to I_i)$ and $\nu_i \seq (I_i \xrightarrow{\id} I_i)$.
By construction, we have $\cM(\phi_\xi[\alpha]) = \cM_\xi[\alpha]$ for any $\alpha \in \Delta_{\ge -1}$ and $\cM(\nu_i) = 0$ for any $i \in \II$. 

The set $\{ \phi_\xi[\alpha] \mid \alpha \in \Delta_{\ge -1}\} \sqcup \{\nu_i \mid i \in \II \}$ forms a complete system of indecomposable objects of $C^2(\inj \C Q_\xi)$.
Since the category $C^2(\inj \C Q_\xi)$ is Krull--Schmidt, each object decomposes into a finite direct sum of indecomposable objects in a unique way. 
For each $\phi \in C^2(\inj \C Q_\xi)$, we define \begin{equation}
CC(\phi) \seq CC(\cM(\phi)) \prod_{i \in \II} x_{i'}^{m_i(\phi)},
\end{equation} 
where $m_i(\phi)$ denotes the multiplicity of the factor $\nu_i$ in $\phi$.

\begin{Lem}
The map $w \mapsto CC(\phi_\xi(w))$ gives a bijection from $\NII$ to the set of cluster monomials of $A_\xi$.
\end{Lem}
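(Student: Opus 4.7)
The plan is to reduce the lemma to: (i) a geometric characterization of $\phi_\xi(w)$ as the unique rigid object of $C^2(\inj\C Q_\xi)$ of ``type'' $w$, (ii) the Krull--Schmidt decomposition of $C^2(\inj\C Q_\xi)$, and (iii) the multiplicativity of the Caldero--Chapoton map together with the additive categorification of $A_\xi$ via \cite{DWZ, MRZ}.

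I would first observe that the $A(w)$-orbits of $X(w)$ are in natural bijection with the iso.\ classes of copresentations $\phi$ of the form $I^{w(0)} \to I^{w(1)}$. Computing $E(\phi,\phi) = \Hom_{K^b(\inj\C Q_\xi)}(\phi, \phi[1])$ directly via chain maps modulo homotopies, one sees that the tangent map $\End(I^{w(0)})\oplus\End(I^{w(1)}) \to \Hom(I^{w(0)},I^{w(1)})$, $(h^0,h^1) \mapsto h^1\phi - \phi h^0$, of the $A(w)$-action at $\phi$ has cokernel canonically isomorphic to $E(\phi,\phi)$. Hence $\mathrm{codim}_{X(w)}(A(w)\cdot\phi) = \dim E(\phi,\phi)$, so the unique open $A(w)$-orbit consists exactly of the rigid copresentations. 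Consequently $\phi_\xi(w)$ is (up to iso.) the unique rigid copresentation of type $w$, and the inverse of $w \mapsto \phi_\xi(w)$ reads off $(\vdim I^{(0)}(\phi), \vdim I^{(1)}(\phi))$ in the basis $\{[I_i]\}_{i \in \II}$.

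Next, since $C^2(\inj\C Q_\xi)$ is Krull--Schmidt with indecomposables $\{\phi_\xi[\alpha]\}_{\alpha\in\Delta_{\ge -1}} \sqcup \{\nu_i\}_{i\in\II}$, every object decomposes uniquely as $\phi \simeq \bigoplus_\alpha \phi_\xi[\alpha]^{\oplus n_\alpha(\phi)} \oplus \bigoplus_i \nu_i^{\oplus m_i(\phi)}$. Because $\nu_i$ is null-homotopic in $K^b(\inj\C Q_\xi)$ (the identity on $I_i$ is a contracting homotopy), the summands $\nu_i$ contribute trivially to every $E$-invariant, and \eqref{eq:EE} reduces the rigidity of $\phi$ to that of the decorated representation $\bigoplus_\alpha \cM_\xi[\alpha]^{\oplus n_\alpha(\phi)}$. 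By \cite{DWZ, MRZ}, the latter is rigid if and only if the support $\{\alpha : n_\alpha(\phi) > 0\}$ consists of pairwise $E$-orthogonal almost positive roots, equivalently, if and only if the cluster variables $\{x[\alpha]\}$ on this support lie in a common cluster of $A_\xi$.

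Combining these two steps yields a bijection between $\NII$ and iso.\ classes of rigid objects of $C^2(\inj\C Q_\xi)$; I would finally transport it via $CC$ using the multiplicativity of $CC$ on direct sums together with $CC(\phi_\xi[\alpha]) = x[\alpha]$ and $CC(\nu_i) = x_{i'}$ (both immediate from the definition $CC(\phi) = CC(\cM(\phi))\prod_i x_{i'}^{m_i(\phi)}$ and $\cM(\nu_i) = 0$), which gives
\[ CC(\phi_\xi(w)) = \prod_{\alpha \in \Delta_{\ge -1}} x[\alpha]^{n_\alpha(\phi_\xi(w))} \cdot \prod_{i \in \II} x_{i'}^{m_i(\phi_\xi(w))}. \]
Since every cluster monomial of $A_\xi$ has a unique factorization as a non-frozen cluster monomial (supported on a compatible subset of $\Delta_{\ge -1}$) times a monomial in the algebraically independent frozen variables $x_{i'}$, the image of this map is precisely the set of cluster monomials and distinct $w$'s give distinct monomials. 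The main obstacle is the tangent-level codimension formula of the first step; once it is available, all remaining assertions are formal consequences of the Krull--Schmidt property and of the known additive categorification of $A_\xi$.
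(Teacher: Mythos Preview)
Your proposal is correct and follows essentially the same route as the paper's proof: both identify $\phi_\xi(w)$ as the unique rigid copresentation of type $w$ (the paper cites \cite{DF} for this, whereas you supply the tangent-space computation $\Cok f_\phi \simeq E(\phi,\phi)$ directly---this is in fact the content of \cite{DF} that the paper invokes, and the same computation reappears in \S\ref{Ssec:slice}), then use \eqref{eq:EE} together with the Krull--Schmidt decomposition and the null-homotopy of $\nu_i$ to reduce to the additive categorification of \cite{DWZ, MRZ}. The only cosmetic difference is that the paper phrases the passage from copresentations to decorated representations via the Grothendieck-group criterion ``$\cM(\phi)\simeq\cM(\psi)$ iff $[\phi]-[\psi]\in\sum_i\Z[\nu_i]$'', while you unpack this through the explicit indecomposable decomposition; these are equivalent.
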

\begin{proof}
{As mentioned in \S\ref{Ssec:state} above,} the set $\{ \phi_\xi(w) \mid w\in \NII\}$ gives a complete system of rigid injective copresentations of $Q_\xi$. 
On the other hand, we know that $\phi$ is rigid if and only if $\cM(\phi)$ is rigid by \eqref{eq:EE}, and that $\cM(\phi) \simeq \cM(\psi)$ if and only if $[\phi] - [\psi] \in \sum_{i \in \II} \Z [\nu_i]$ in the Grothendieck group of $C^2(\inj \C Q_\xi)$.
Having these remarks, the assertion now follows from the results explained in the second paragraph of \S\ref{Sssec:decr}.    
\end{proof}

\subsubsection{Monoidal categorification}
The following theorem was originally conjectured by Hernandez--Leclerc \cite{HL} when $Q_\xi$ has a sink-source orientation.
For this case, it was proved by Hernandez--Leclerc \cite{HL} for type $\mathrm{A}$, $\mathrm{D}_4$, and by Nakajima \cite{NakCA} for all type $\mathrm{ADE}$.
For general $\xi$, some results were obtained by Hernandez--Leclerc \cite{HL2} and Brito--Chari \cite{BC}. 
In full generality, it was proved by Kimura--Qin \cite{KQ}. 

\begin{Thm} \label{Thm:mcat}
There is a ring isomorphism $\tchi_q \colon K(\Cc_{\xi, 1}) \xrightarrow{\simeq} A_\xi$ satisfying 
\[ \tchi_q(L_\xi(w)) = CC(\phi_\xi(w)) \]
for all $w \in \NII$. 
In particular, $\tchi_q$
induces a bijection between the simple isomorphism classes of $\Cc_{\xi,1}$ and the cluster monomials of $A_\xi$.
\end{Thm}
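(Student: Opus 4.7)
The plan is to follow Kimura--Qin \cite{KQ} (generalizing \cite{NakCA}), combining Hernandez--Leclerc's truncated $q$-character, Nakajima's perverse-sheaf realization of the Grothendieck ring, and Derksen--Weyman--Zelevinsky's cluster character formula via quiver Grassmannians.

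First I would define $\tchi_q$ as the truncated $q$-character map of \cite{HL}, sending a finite-dimensional $U_q(\Lfg)$-module to a Laurent polynomial in $q$-character variables $Y_{i,a}$, then composed with a change of variables adapted to the height function $\xi$ that identifies $Y_{i,q^{\xi(i)}}$ (up to frozen factor) with the initial cluster variable $x_i$. Multiplicativity of the $q$-character on tensor products makes $\tchi_q$ a ring homomorphism, and a direct check on the fundamental modules $L_\xi(\delta_i,0)$ and $L_\xi(0,\delta_i)$ shows it produces the initial cluster and frozen variables, so the image of $\tchi_q$ at least lies inside the ambient Laurent polynomial ring containing $A_\xi$.

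The key identity $\tchi_q(L_\xi(w))=CC(\phi_\xi(w))$ would be established via Nakajima's geometric construction of $\Cc$ through perverse sheaves on graded quiver varieties, specialized to the level-one case where, by Kimura--Qin's observation, the relevant variety reduces to the vector space $X(w)$ itself, and $L_\xi(w)$ is realized as the $A(w)$-equivariant intersection cohomology complex $\IC(\ol{A(w) \cdot \phi_\xi(w)})$ of the closure of the open orbit. The truncated $q$-character becomes a generating series for stalk dimensions of this IC sheaf across the strata of $X(w)$; after applying the decomposition theorem and a standard manipulation, this series reduces to a sum of Euler characteristics $\chi(\Gr_v(\phi_\xi(w)))$ weighted by precisely the monomials appearing in the Derksen--Weyman--Zelevinsky formula for $CC(\phi_\xi(w))$, which gives the claimed equality.

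Combining this identity with the preceding Lemma (that $w \mapsto CC(\phi_\xi(w))$ is a bijection from $\NII$ to the set of cluster monomials of $A_\xi$) then shows $\tchi_q$ sends the basis $\{L_\xi(w) \mid w \in \NII\}$ of $K(\Cc_{\xi,1})$ bijectively to a $\Z$-basis of $A_\xi$, hence is the desired ring isomorphism, and the simple-to-cluster-monomial bijection follows. The main obstacle I anticipate is showing that the image of $\tchi_q$ actually lands inside the cluster algebra $A_\xi$ rather than merely the ambient Laurent polynomial ring, equivalently that simple modules in $\Cc_{\xi,1}$ give rise to cluster monomials. This requires either a direct verification of the cluster exchange relations (T-systems) at the level of the Grothendieck ring via short exact sequences in $\Cc_{\xi,1}$ (feasible for special sink-source orientations, as in \cite{HL, NakCA}), or the indirect argument using the rigidity of $\phi_\xi(w)$ together with Fu--Keller-type results classifying cluster monomials in finite type via maximal rigid objects in the cluster category — the latter route being the one that handles general $\xi$ in \cite{KQ}.
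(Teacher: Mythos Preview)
The paper does not give its own proof of this theorem: it is quoted as a known result, attributed in the paragraph preceding the statement to Kimura--Qin \cite{KQ} in full generality (and to Hernandez--Leclerc \cite{HL,HL2} and Nakajima \cite{NakCA} for special orientations). So there is no in-paper proof to compare against; your outline is essentially a sketch of the Kimura--Qin argument, which is the correct attribution.

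Two small comments on the sketch itself. First, the quiver Grassmannian entering the $CC$ formula is $\Gr_v(M)$ with $M=\Ker\phi_\xi(w)$, not ``$\Gr_v(\phi_\xi(w))$''; the passage from IC stalks on $X(w)$ to these Grassmannians goes through the Fourier--Laumon transform and the description of $F(v,w)^\perp$ (cf.\ \S\ref{Ssec:NKQ} and the proof of Proposition~\ref{Prop:key}), not directly from strata of $X(w)$. Second, your final paragraph is slightly circular: once the identity $\tchi_q(L_\xi(w))=CC(\phi_\xi(w))$ is established, the image of $\tchi_q$ automatically lies in $A_\xi$, since $CC(\phi_\xi(w))$ is a cluster monomial by the preceding Lemma. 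The ``main obstacle'' you name is therefore not an additional step after the key identity but is precisely its content; in \cite{KQ} this is handled by identifying the Fourier-transformed IC sheaves with Lusztig's dual canonical basis and invoking the quantum cluster structure, rather than by checking exchange relations directly.
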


\subsubsection{Conclusion}
Now, it is easy to see that Theorem~\ref{Thm:main1} and Theorem~\ref{Thm:main} are mutually equivalent by Theorem~\ref{Thm:mcat} and \eqref{eq:EE}.

\section{Geometric preliminaries}
\label{Sec:geom}
In this section, we give a brief review of the geometric construction of finite-dimensional representations of $U_q(\Lfg)$ by means of equivariant constructible sheaves on the graded quiver varieties due to Nakajima.
Basic references are \cite{Nak, NakT} {(see also \cite{Nak1, Nak2})}. 
There are no new results in this section.
 
\subsection{Notation}
Let $\gV_\C$ denote the category of $\Z$-graded $\C$-vector spaces $V = \bigoplus_{k \in \Z}V^k$ of finite total dimension, i.e., $\sum_{n \in \Z}\dim V^n < \infty$, whose morphisms are homogeneous linear maps.
Let $t$ be an indeterminate.
For $V \in \gV_\C$, its graded dimension is defined to be
\[ \gdim (V) \seq \sum_{n \in \Z} (\dim V^n)t^n.\]
This is an element of $\N[t^{\pm 1}]$.
For $V, W \in \gV_\C$ and $l \in \Z$, we denote by $\Hom^l(V,W)$ the space of $\C$-linear maps $f \colon V \to W$ of degree $l$, i.e., satisfying $f(V^n) \subset W^{n+l}$ for all $n \in \Z$.
Let 
\[G(V) \seq \Hom^{0}(V,V)^\times = \prod_{n \in \Z}GL(V^n).\] 

In what follows, a variety always means a complex algebraic variety.
When a complex algebraic group $G$ acts on a variety $X$, we say that $X$ is a $G$-variety.
We set $\pt \seq \Spec \C$ and view it as a $G$-variety with the trivial action. 
Given a field $\kk$ and a $G$-variety $X$, we denote by $D^b_G(X,\kk)$ the bounded $G$-equivariant derived category of constructible $\kk$-complexes on $X$ in the sense of Bernstein--Lunts \cite{BL} (see also \cite[Ch.~6]{Ach}).
This is a triangulated category equipped with a $t$-structure whose heart is identical to the category $\Perv_G(X, \kk)$ of $G$-equivariant perverse sheaves on $X$. 
For any objects $\mathcal{F}$, $\mathcal{G} \in D^b_G(X,\kk)$, we set
\[ \Hom^\bullet_G(\mathcal{F,G}) \seq \bigoplus_{n \in \Z}\Hom^n_G(\mathcal{F,G}), \quad 
\Hom^\bullet_G(\mathcal{F,G})^\wedge \seq \prod_{n \in \Z}\Hom^n_G(\mathcal{F,G})\]
where $\Hom^n_G(\mathcal{F, G}) \seq \Hom_{D_G^b(X,\kk)}(\mathcal{F,G}[n])$ with $[1]$ being the shift functor. 
We also use the notations $\rH^\bullet_G(\mathcal{F}) \seq \Hom^\bullet_G(\uk_X, \mathcal{F})$ and $\hrH^\bullet_G(\mathcal{F}) \seq \Hom^\bullet_G(\uk_X, \mathcal{F})^\wedge$, where $\uk_X$ is the constant $\kk$-sheaf (of rank one).
When $\mathcal{F} = \uk_X$, we recover the $G$-equivariant cohomology ring of $X$ and hence write $\rH_G^\bullet(X,\kk) = \rH^\bullet_G(\uk_X)$ and $\hrH_G^\bullet(X, \kk) \seq \hrH^\bullet_G(\uk_X)$.

For a group homomorphism $f \colon H \to G$, we have the associated functor $\Res_f \colon D^b_G(X, \kk) \to D^b_H(X,\kk)$ of equivariance change. 
When $f$ is the inclusion of a subgroup or the quotient by a normal subgroup, we denote it by $\For^G_H$ or $\Infl_G^H$ respectively.
When $f$ is understood from the context, we often drop $\Res_f$ from the notation for the sake of simplicity.
For instance, we often denote
$\Hom^\bullet_H(\Res_f(\mathcal{F}), \Res_f(\mathcal{G}))$ and $\rH^\bullet_H(\Res_f(\mathcal{F}))$ simply by $\Hom^\bullet_H(\mathcal{F, G})$ and $\rH^\bullet_H(\mathcal{F})$ respectively. 
The following fact is used several times below.

\begin{Lem}[{\cite[Lemma~6.7.4]{Ach}}] \label{Lem:eq_change}
Let $f \colon H \to G$ be a group homomorphism and $\mathcal{F} \in D^b_G(X, \kk)$.
If $\rH^\bullet_G(\mathcal{F})$ is free over $\rH^\bullet_G(\pt, \kk)$, we have
\[ \rH_H^\bullet(\mathcal{F}) \simeq \rH^\bullet_G(\mathcal{F}) \otimes_{\rH^\bullet_G(\pt, \kk)} \rH^\bullet_H(\pt, \kk),\]
where $\rH^\bullet_G(\pt, \kk) \to \rH^\bullet_H(\pt, \kk)$ is induced from $f$.
\end{Lem}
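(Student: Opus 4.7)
The plan is to realize the asserted isomorphism as the degeneration of an Eilenberg--Moore spectral sequence for change of equivariance, made possible by the freeness hypothesis.

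First I would construct a canonical comparison morphism. The restriction-of-equivariance functor $\Res_f$ induces a graded $\kk$-linear map $\rH^\bullet_G(\mathcal{F}) \to \rH^\bullet_H(\mathcal{F})$ which intertwines the $\rH^\bullet_G(\pt, \kk)$-module structures (using the ring map $f^*\colon \rH^\bullet_G(\pt, \kk) \to \rH^\bullet_H(\pt, \kk)$ on the target). By the universal property of the tensor product, this extends to a canonical morphism
\[ \alpha \colon \rH^\bullet_G(\mathcal{F}) \otimes_{\rH^\bullet_G(\pt, \kk)} \rH^\bullet_H(\pt, \kk) \longrightarrow \rH^\bullet_H(\mathcal{F}), \]
and the task reduces to proving that $\alpha$ is an isomorphism under the freeness hypothesis.

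Next I would work in the Bernstein--Lunts simplicial model, identifying $\rH^\bullet_G(\mathcal{F})$ and $\rH^\bullet_H(\mathcal{F})$ with the cohomologies of the descended sheaves on the Borel constructions $EG \times_G X$ and $EH \times_H X$ respectively. The group homomorphism $f$ induces $Bf \colon BH \to BG$, and the associated map of Borel constructions is a model for the homotopy pullback along $Bf$. Standard Eilenberg--Moore machinery for such pullbacks then furnishes a spectral sequence
\[ E_2 = \on{Tor}^{\rH^\bullet_G(\pt, \kk)}_{*,*}\!\left(\rH^\bullet_G(\mathcal{F}),\; \rH^\bullet_H(\pt, \kk)\right) \Longrightarrow \rH^\bullet_H(\mathcal{F}), \]
whose edge map on the $\on{Tor}^0$-column recovers $\alpha$.

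Finally, freeness of $\rH^\bullet_G(\mathcal{F})$ over $\rH^\bullet_G(\pt, \kk)$ implies flatness, so all higher Tor terms vanish, the spectral sequence collapses at $E_2$, and $\alpha$ survives as the desired isomorphism. The main technical burden is the correct construction and convergence of the Eilenberg--Moore spectral sequence in the equivariant derived category, together with the identification of its edge map; this is precisely the content of the general framework developed in \cite[\S6.7]{Ach}, and is why we are content to invoke the cited statement rather than reproduce the derivation here.
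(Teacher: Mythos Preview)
The paper does not supply its own proof of this lemma; it is stated with the attribution \cite[Lemma~6.7.4]{Ach} and invoked as a black box. So there is nothing to compare against on the paper's side.

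Your sketch is a correct outline of the standard argument: build the comparison map, identify it with the edge map of the Eilenberg--Moore (or Leray--Hirsch type) spectral sequence for the change-of-group fibration, and use freeness to kill the higher $\mathrm{Tor}$ terms. This is exactly the mechanism behind the cited result in Achar's book, and your closing sentence acknowledging that the technical content lives in \cite[\S6.7]{Ach} is appropriate. One small caution: convergence of Eilenberg--Moore spectral sequences can be subtle in general (e.g.\ one often needs simple connectivity or finiteness hypotheses on the fiber or base), so if you were to write this out in full you would need to check that the relevant hypotheses are met in the Bernstein--Lunts framework; but since you are ultimately deferring to the reference, this is not a gap in what you have written.
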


\subsection{Graded quiver varieties}
For $V = (V_i)_{i \in \II}$, $W = (W_i)_{i \in \II} \in (\gV_\C)^\II$, we consider the space of linear maps
\[ \mathrm{M}^\bullet(V, W) \seq \bigoplus_{i, j \in \II, c_{ij}<0} \Hom^{-1}(V_i, V_j)  \oplus \bigoplus_{i \in \II} \left(\Hom^{-1}(V_i,W_i) \oplus \Hom^{-1}(W_i,V_i)\right),  
\]
where $(c_{ij})_{i,j \in \II}$ is the Cartan matrix of $\fg$.
The groups
\[ G(V) \seq \prod_{i \in \II} G(V_i), \quad G(W) \seq \prod_{i \in \II}G(W_i) \]
act on $\mathrm{M}^\bullet(V, W)$ by conjugation. 
Let $\mu \colon \mathrm{M}^\bullet(V,W) \to \bigoplus_{i \in \II}\Hom^{-2}(V_i,V_i)$ be the $G(V)$-equivariant map given by
\[\mu((B_{j,i}), (a_i), (b_i)) \seq (\textstyle\sum_{j \in \II, c_{ij}<0}B_{i,j} B_{j,i} + b_ia_i)_{i \in \II},\]
where $B_{j,i} \in \Hom^{-1}(V_i, V_j)$, $a_i \in \Hom^{-1}(V_i, W_i)$, and $b_i \in \Hom^{-1}(W_i, V_i)$.
We say that a point $((B_{j,i}), (a_i), (b_i)) \in \mu^{-1}(0)$ is stable if there is no non-zero $\Z$-graded linear subspace $V'_i \subset V_i$ for any $i \in \II$ such that $B_{j,i}(V'_i) = 0$ for any $j \in \II$ with $c_{ij}<0$.
The group $G(V)$ acts freely on the (possibly empty) open subset $\mu^{-1}(0)^{st} \subset \mu^{-1}(0)$ of stable points. 
The quotient 
\[\qv(V,W) \seq \mu^{-1}(0)^{st}/G(V) \]
is a smooth quasi-projective $G(W)$-variety.
It can be identified with a quotient in the geometric invariant theory.
In particular, it comes with a natural projective $G(W)$-equivariant morphism
\begin{equation} \label{eq:pivw}
\pi_{V, W} \colon \qv(V,W) \to \qv_0(V,W) \seq \Spec \C[\mu^{-1}(0)]^{G(V)}.
\end{equation} 
Both varieties $\qv(V,W)$ and $\qv_0(V,W)$ only depend on the graded dimension vector of $V$.
Therefore, we write $\qv(\bv,W) \seq \qv(V,W)$, $\qv_0(\bv, W) \seq \qv_0(V,W)$, and $\pi_{\bv, W} \seq \pi_{V,W}$ when $\bv = (\gdim (V_i))_{i \in \II} \in \N[t^{\pm 1}]^\II$.

A geometric point of the affine variety $\qv_0(\bv, W)$ corresponds to a closed $G(V)$-orbit in $\mu^{-1}(0)$.
Let $\qv_0(\bv, W)^{reg}$ be the smooth open subvariety of $ \qv_0(\bv, W)$ corresponding to free orbits. 
It is non-empty if and only if
\begin{equation} \label{eq:ldom}
\qv(\bv,W) \neq \varnothing \quad \text{ and } \quad
(\gdim(W_i))_{i \in \II} - C(t) \cdot \bv \in \N[t^{\pm 1}]^\II, 
\end{equation}
where $C(t) \seq (\frac{t^{c_{ij}}-t^{-c_{ij}}}{t-t^{-1}})_{i,j \in \II}$ is the quantum Cartan matrix.
The set
\[ \Dom(W) \seq \{ \bv \in \N[t^{\pm 1}]^\II \mid \text{the condition~\eqref{eq:ldom} is satisfied}\}\]
is finite.
For $\bv \in \Dom(W)$, the morphism $\pi_{\bv,W}$ restricts to an isomorphism 
\begin{equation} \label{eq:isomreg}
\pi^{-1}_{\bv,W}(\qv_0(\bv, W)^{reg}) \xrightarrow{\simeq} \qv_0(\bv, W)^{reg}.
\end{equation}

For $\bv, \bv' \in \N[t^{\pm 1}]^\II$, we have a natural closed embedding $\qv_0(\bv, W) \subset \qv_0(\bv+\bv', W)$.
Taking the unions over $\bv \in \N[t^{\pm 1}]^\II$, we define
\[ \qv(W) \seq \bigsqcup_{\bv} \qv(\bv, W), \quad  \qv_0(W) \seq \bigcup_{\bv} \qv_0(\bv, W).\]
These unions are essentially finite as $\qv(\bv, W) \neq \varnothing$ only for finitely many $\bv$ and $\qv_0(\bv, W)$ stabilizes  for sufficiently large $\bv$.
The morphisms~\eqref{eq:pivw} are unified into a $G(W)$-equivariant projective morphism 
\[ \pi_{W} \colon \qv(W) \to \qv_0(W).\]
The locally closed subvarieties $\{\qv_0(\bv, W)^{reg}\}_{\bv \in \Dom(W)}$ give a finite stratification of $\qv_0(W)$.
Note that $\qv_0(0, W)^{reg} = \{ 0 \}$ is the unique closed stratum.

In what follows, we assume that $\kk$ is an algebraically closed field containing $\Q(q)$ as in \S\ref
{Ssec:qloop}.
Consider the proper push-forward $(\pi_{W})_! \uk_{\qv(W)}$ of the constant $\kk$-sheaf on $\qv(W)$ and let $\cL'_W \seq {}^p\rH^\bullet((\pi_{W})_! \uk_{\qv(W)})$ denote its total perverse cohomology.
By the decomposition theorem, this is a semisimple object in $\Perv_{G(W)}(\qv_0(W), \kk)$.
More precisely, we have 
\begin{equation} \label{eq:DT}
 \cL'_W  \simeq \bigoplus_{\bv \in \Dom(W)} L_{\bv, W} \boxtimes \IC_{\bv, W},
 \end{equation}
where $\IC_{\bv, W} \in D^b_{G(W)}(\qv_0(W), \kk)$ is the intersection cohomology complex of $\ol{\qv_0(\bv, W)^{reg}}$ and  $L_{\bv, W}$ is a non-zero finite-dimensional $\kk$-vector space.
Note that $\IC_{0,W}$ is the skyscraper sheaf $\uk_{\{0\}}$ at the origin $0 \in \qv_0(W)$.

\subsection{Nakajima's homomorphism}
For each $W \in (\gV_\C)^\II$, we consider the completed Yoneda algebra
$\Hom_{G(W)}^\bullet(\cL'_W, \cL'_W)^\wedge$.
Note that this is the completion of an $\N$-graded algebra $\Hom_{G(W)}^\bullet(\cL'_W, \cL'_W)$ with a semisimple $0$-th component $\Hom_{G(W)}^0(\cL'_W, \cL'_W) \simeq \prod_{\bv \in \Dom(W)}\End_\kk(L_{\bv, W})$. 
In particular, each $L_{\bv, W}$ can be regarded as a simple module over $\Hom_{G(W)}^\bullet(\cL'_W, \cL'_W)^\wedge$ and the set $\{ L_{\bv, W} \}_{\bv \in \Dom(W)}$ gives a complete system of simple modules.

\begin{Thm}[Nakajima \cite{Nak}]
\label{Thm:Nak}
There is a homomorphism of $\kk$-algebras
\[\varphi'_W \colon U_q(\Lfg) \to \Hom_{G(W)}^\bullet(\cL'_W, \cL'_W)^\wedge\]
satisfying the following property.
For any $\bv \in \Dom(W)$, the pullback $(\varphi'_W)^* L_{\bv, W}$ is a simple $U_q(\Lfg)$-module in $\Cc$ isomorphic to $L(\prod_{i \in \II, n \in \Z}\varpi_{i,q^n}^{m_{i,n}})$, where the multiplicities $m_{i,n} \in \N$ are determined by the formula
\[ \left(\textstyle\sum_{n \in \Z} m_{i,n} t^n\right)_{i \in \II} = (\gdim(W_i))_{i \in \II} - C(t) \cdot \bv.\] 
In particular, when $\bv = 0$, we have $(\varphi'_W)^*L_{0,W} \simeq L(\varpi_W)$, where 
\begin{equation} \label{eq:varpiW}
\varpi_W \seq \prod_{i \in \II, n \in \Z} \varpi_{i,q^n}^{\dim W_i^n}.
\end{equation}
\end{Thm}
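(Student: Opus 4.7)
The plan is to follow Nakajima's original convolution construction, adapted to the graded setting. The idea is that Drinfeld's generators $x_{i,n}^{\pm}$, $h_{i,m}$, $k_i^{\pm 1}$ will act on each $\hrH^\bullet_{G(W)}(\cL'_W)$ via convolution with Hecke correspondences in products of graded quiver varieties, and $\varphi'_W$ is the resulting algebra map, with the completion on the target accounting for the infinite sum over graded dimensions $\bv$.

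First, for each $i \in \II$ and pair $\bv, \bv'$ of graded dimension vectors differing by one in a prescribed $\Z$-graded way at vertex $i$, I would construct the Hecke correspondence $\mathfrak{P}_i \subset \qv(\bv', W) \times \qv(\bv, W)$ parametrizing pairs of stable data equipped with a codimension-$(0,1)$ sub-object at $i$. The correspondence is smooth lagrangian and carries a tautological line bundle $\tau$; convolution with $[\mathfrak{P}_i] \cap c_1(\tau)^n$, read in one direction or the other, produces the operators $x_{i,n}^{\pm}$. The Cartan currents $\psi_i^\pm(z)$ act as multiplication by tautological characteristic classes representing the formal ``quantum Euler characteristic'' of $\gdim(W_i) - \sum_j C(t)_{ij} \bv_j$; these are the classes that will later encode the Drinfeld polynomial.

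The main technical step is to verify the Drinfeld relations on the operator level. The purely Cartan relations and the $[h_{i,m}, x_{j,n}^\pm]$ commutators follow from the projection formula applied to the obvious fibre products; the quadratic and Serre relations among the $x^\pm$'s reduce by standard arguments to the $A_1$ and $A_2$ sub-cases. The main obstacle will be the mixed commutator $[x_{i,m}^+, x_{j,n}^-]$, whose right-hand side is a residue of a generating series in the Cartan currents. Geometrically this requires decomposing $\mathfrak{P}_i \times_{\qv(\bv, W)} \mathfrak{P}_j^{\op}$ into its off-diagonal and diagonal loci and doing an excess-intersection calculation, where the tautological line bundles contribute precisely the residue that reproduces $\psi_i^\pm$ with the correct Chern-class normalization.

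Finally, to identify the simple modules, I use the decomposition \eqref{eq:DT}: the $\{L_{\bv, W}\}_{\bv \in \Dom(W)}$ form a complete set of simples over $\Hom^\bullet_{G(W)}(\cL'_W, \cL'_W)^\wedge$. Pulling back along $\varphi'_W$, each acquires a $U_q(\Lfg)$-module structure. To produce an $\ell$-highest weight vector, I would take a generic point $x \in \qv_0(\bv, W)^{reg}$; by \eqref{eq:isomreg} the fibre $\pi_W^{-1}(x)$ is a single free orbit, whose fundamental class determines a canonical vector in the copy of $L_{\bv, W}$ inside the stalk of $\cL'_W$ at $x$. Any Hecke correspondence increasing $\bv$ further sends this class into a class supported off $\qv_0(\bv, W)^{reg}$, hence to zero in $L_{\bv, W}$, so the vector is $\ell$-highest. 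Equivariant localization then expresses the eigenvalue of $\psi_i^\pm(z)$ as a ratio of Chern polynomials, which via the quantum Cartan matrix is exactly the $\ell$-weight $\prod_{i,n} \varpi_{i,q^n}^{m_{i,n}}$ with $m_{i,n}$ as in the theorem. The case $\bv = 0$ collapses to $\pi_W^{-1}(0) = \{\pt\}$ and the $\ell$-highest weight is read directly as $\varpi_W$ from \eqref{eq:varpiW}.
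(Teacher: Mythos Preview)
The paper's proof is not a self-contained argument but a chain of citations: $\varphi'_W$ is defined as the composition of (i) Nakajima's homomorphism from $U_q(\Lfg)$ to the completed $G(W)$-equivariant $K$-theory convolution algebra of the Steinberg variety $Z^\bullet(W) = \qv(W) \times_{\qv_0(W)} \qv(W)$, (ii) the equivariant Chern character (twisted by Todd classes) to the completed Borel--Moore homology convolution algebra, and (iii) the Chriss--Ginzburg isomorphism identifying the latter with $\Hom^\bullet_{G(W)}(\cL'_W,\cL'_W)$. The identification of simples is then cited from \cite[Theorem~14.3.2]{Nak}. No relations are re-verified.

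Your proposal attempts instead a direct construction in equivariant Borel--Moore homology, bypassing $K$-theory. There is a genuine gap in this plan: you define $x_{i,n}^\pm$ as convolution with $[\mathfrak{P}_i]\cap c_1(\tau)^n$, but this makes sense only for $n\ge 0$, since $c_1(\tau)$ is nilpotent on each finite-dimensional piece and has no inverse in cohomology. In Nakajima's $K$-theoretic construction the operator for $x_{i,n}^\pm$ involves $[\tau]^{\otimes n}$, which is invertible for all $n\in\Z$; after the Chern character this becomes $e^{n\,c_1(\tau)}$, a formal power series living only in the \emph{completed} algebra. This is the real reason for the completion $(\,)^\wedge$ on the target and for routing through $K$-theory: the transcendental parameter $q\in\kk^\times$ has no incarnation as a cohomology class, only as the image of a $K$-theory class under the Chern character. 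A purely cohomological definition via powers of $c_1$ would at best produce a representation of a current-algebra or Yangian-type object, not of $U_q(\Lfg)$ with its loop generators indexed by all of $\Z$.

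Your sketch of the simple-module identification is closer in spirit to what Nakajima actually proves, but note that even there the $\ell$-highest weight is computed in $K$-theory (as a ratio of exterior powers of tautological bundles evaluated at a fixed point), and only matches the formula in the statement after passing through the Chern character. So the $K$-theory detour is not an artifact of the paper's exposition; it is where the construction genuinely lives.
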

\begin{proof}
The $\kk$-algebra homomorphism $\varphi_W$ is obtained as the composition of (i) a completion of the homomorphism in \cite[Theorem~9.4.1]{Nak} from $U_q(\Lfg)$ to the convolution algebra $\widehat{K}^{G(W)}(Z^\bullet(W))_\kk$ of the completed $G(W)$-equivariant $K$-theory (see \cite[\S4.6]{Frmat} for details), where $Z^\bullet(W) \seq \qv(W) \times_{\qv_0(W)} \qv(W)$ is the Steinberg type variety, (ii) the $G(W)$-equivariant Chern character map (suitably twisted by the Todd classes) from $\wh{K}^{G(W)}(Z^\bullet(W))_\kk$ to the convolution algebra $\wh{\rH}^{G(W)}_\bullet(Z^\bullet(W), \kk)$ of the completed $G(W)$-equivariant Borel--Moore homology (equivariant version of \cite[Theorem~5.11.11]{CG}), and (iii) the completion of an isomorphism between $\rH^{G(W)}_\bullet(Z^\bullet(W), \kk)$ and $\Hom_{G(W)}^\bullet(\cL'_W, \cL'_W)$ (equivariant version of the isomorphism in \cite[\S8.6]{CG}).
The desired property is due to \cite[Theorem~14.3.2]{Nak}.  
\end{proof}

\subsection{Deformed standard modules}
For each $\varpi \in (1 + z\kk[z])^\II$, the \textit{standard module} (also known as the \textit{local Weyl module} in the sense of Chari--Pressley \cite{CPweyl}) $M(\varpi)$ is defined.
It is the largest $\ell$-highest weight module in $\Cc$ and it has $L(\varpi)$ as a unique simple quotient.

Fix $W \in (\gV_\C)^\II$. 
Let $T(W) = \C^\times \id_W$ denote the one-dimensional torus of non-zero scalar matrices in $G(W)$ and
$i_0 \colon \{ 0\} \to \qv_0(W)$ the inclusion.
The action of $T(W)$ on $\qv_0(W)$ is trivial.
Through the Yoneda product and the functor $\For^{G(W)}_{T(W)}$, 
the algebra $\Hom^\bullet_{G(W)}(\cL'_W, \cL'_W)^\wedge$ acts on $\hrH_{T(W)}^\bullet(i_0^! \cL'_W)$.
Via $\varphi'_W$, this yields a geometric realization of the deformed standard module $M(\varpi_W)[\![u]\!]$ (recall the definition of $M[\![u]\!]$ from Remark~\ref{Rem:Mu} and $\varpi_W$ from \eqref{eq:varpiW}) as follows.

\begin{Thm}[Nakajima \cite{NakExt}] \label{Thm:standard}
We have
\[ (\varphi'_W)^*\hrH^\bullet_{T(W)}(i_0^!\cL'_W) \simeq M(\varpi_W)[\![ u ]\!]\]
as $U_q(\Lfg)[\![u]\!]$-modules, where the action of $u$ on the left-hand side is given by  the product with a non-zero element of $\rH^2_{T(W)}(\pt, \kk)$.
\end{Thm}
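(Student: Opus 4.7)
My plan is to first translate the sheaf-theoretic object $\hrH^\bullet_{T(W)}(i_0^! \cL'_W)$ into the completed $T(W)$-equivariant Borel--Moore homology of the central Lagrangian fiber $\mathfrak{L}(W) \seq \pi_W^{-1}(0)$, and then to identify it with $M(\varpi_W)[\![u]\!]$ by reducing to Nakajima's original geometric construction of the (undeformed) standard module. Writing $j \colon \mathfrak{L}(W) \hookrightarrow \qv(W)$ for the inclusion, proper base change along $\pi_W$ gives $i_0^! (\pi_W)_! \uk_{\qv(W)} \simeq (\pi_W|_{\mathfrak{L}(W)})_! \, j^! \uk_{\qv(W)}$, and the smoothness of each component of $\qv(W)$ identifies $j^! \uk_{\qv(W)}$ with (a shift of) $\omega_{\mathfrak{L}(W)}$. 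Absorbing the shifts into the passage from $(\pi_W)_! \uk_{\qv(W)}$ to its total perverse cohomology $\cL'_W$ and taking $T(W)$-equivariant global sections yields a canonical isomorphism $\hrH^\bullet_{T(W)}(i_0^! \cL'_W) \simeq \wh{\rH}^{T(W)}_\bullet(\mathfrak{L}(W), \kk)$ of completed $\rH^\bullet_{T(W)}(\pt,\kk) = \kk[\![u]\!]$-modules.

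Under this isomorphism the Yoneda action of $\Hom^\bullet_{G(W)}(\cL'_W, \cL'_W)^\wedge$ transports, via the convolution isomorphism $\Hom^\bullet_{G(W)}(\cL'_W, \cL'_W) \simeq \rH^{G(W)}_\bullet(\qvs(W), \kk)$ already used in the proof of Theorem~\ref{Thm:Nak}, to the standard convolution action of the Steinberg algebra on the fiber homology. Pulling back via $\varphi'_W$ therefore equips this space with a $U_q(\Lfg)[\![u]\!]$-module structure which is the cohomological, $T(W)$-equivariant counterpart --- obtained through the equivariant Chern character --- of Nakajima's $K$-theoretic realization of the deformed standard module on the $G(W) \times \C^\times$-equivariant $K$-homology of $\mathfrak{L}(W)$ from \cite{Nak, NakT}.

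To identify the resulting module with $M(\varpi_W)[\![u]\!]$, I would proceed by a universal property argument. Setting $u = 0$ (equivalently, passing to the non-equivariant cohomology) recovers $(\varphi'_W)^* \rH^\bullet(i_0^! \cL'_W)$, which by Nakajima's earlier work --- the undeformed version of the theorem, corresponding to Theorem~\ref{Thm:Nak} applied to $\bv = 0$ and the origin stratum --- is the standard module $M(\varpi_W)$, cyclic of $\ell$-highest weight $\varpi_W$ and of the expected total dimension. By standard parity and purity arguments for Nakajima varieties, $\hrH^\bullet_{T(W)}(i_0^! \cL'_W)$ is flat over $\kk[\![u]\!]$ of the same total rank (cf.\ Lemma~\ref{Lem:eq_change}), and the canonical cyclic generator, coming from the fundamental class of the point component of $\mathfrak{L}(W)$ with $V = 0$, lifts to an $\ell$-highest weight vector of weight $\varpi_W$ over $\kk[\![u]\!]$. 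The universal property of the local Weyl module $M(\varpi_W)$ then promotes the $u=0$ identification to a $\kk[\![u]\!]$-linear isomorphism with $M(\varpi_W)[\![u]\!]$.

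The main obstacle in this plan is matching the $T(W)$-equivariant parameter $u \in \rH^2_{T(W)}(\pt, \kk)$ with the algebraic affinization parameter $z = e^u$ appearing in Remark~\ref{Rem:Mu}. Concretely, one must verify that each Drinfeld generator $x_{i,n}^{\pm}$, acting through $\varphi'_W$ on $\hrH^\bullet_{T(W)}(i_0^! \cL'_W)$, corresponds to the operator $e^{nu} x_{i,n}^{\pm}$ on $M(\varpi_W)[\![u]\!]$. This amounts to tracking how the $T(W)$-weights of the Nakajima convolution cycles defining $\varphi'_W$ interact with the loop grading of $U_q(\Lfg)$ (since $T(W)$ scales the framing vectors and thereby controls the spectral parameter), and controlling the $T(W)$-equivariant Chern character on these cycles, as carried out in \cite{NakExt}.
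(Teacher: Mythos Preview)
Your proposal is correct and aligned with the paper's own argument, which is very terse: it simply cites \cite[Theorem~2 and Remark~2.15]{NakExt} for the uncompleted statement, notes that $\rH^\bullet_{G(W)}(i_0^!\cL'_W) \simeq \rH^{G(W)}_\bullet(\pi_W^{-1}(0),\kk)$ is free over $\rH^\bullet_{G(W)}(\pt,\kk)$ by \cite[\S7.1]{Nak} so that Lemma~\ref{Lem:eq_change} applies to pass from $G(W)$- to $T(W)$-equivariance, and completes. What you have written is essentially an unpacking of the content of that citation: the base-change identification with Borel--Moore homology of the central fiber, the compatibility of the Yoneda and convolution actions, the freeness over the equivariant cohomology of a point, and the crucial matching of the torus parameter with the affinization parameter via the Chern character (your ``main obstacle'') are exactly the ingredients of \cite{NakExt}. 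The one place where your outline is slightly more indirect than necessary is the universal-property step: once the spectral-parameter matching is in hand, the isomorphism with $M(\varpi_W)[\![u]\!]$ follows immediately from the non-equivariant identification $M(\varpi_W)\simeq \rH_\bullet(\pi_W^{-1}(0),\kk)$ together with freeness, without a separate appeal to the Weyl-module universal property; but this does not affect correctness.
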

\begin{proof}
This follows from \cite[Theorem~2 and Remark~2.15]{NakExt} and a completion.
{To be more precise, we apply the same completion as in the proof of Theorem \ref{Thm:Nak} to find that the $U_q(L\fg)$-module $(\varphi'_W)^*\hrH^{G(W)}_\bullet(\pi^{-1}(0), \kk)$ is isomorphic to the completion $\widehat{M}(\varpi_W)$ of the global Weyl module of highest weight $\varpi_W \seq \sum_{i \in \II} \dim(W_i)\varpi_i$ (with $\varpi_i$ being the $i$-th fundamental weight for $\fg$) at the maximal ideal of its endomorphism ring corresponding to $\varpi_W$.
The module $\widehat{M}(\varpi_W)$ is the same thing as what the author called the deformed local Weyl module in \cite[\S2.3]{Fahw}. 
The endomorphism ring of $\widehat{M}(\varpi_W)$ is identical to $\hrH^\bullet_{G(W)}(\pt, \kk)$ and we have 
\[M(\varpi_W)[\![ u ]\!] \simeq \widehat{M}(\varpi_W) \otimes_{\hrH^\bullet_{G(W)}(\pt, \kk)}\hrH^\bullet_{T(W)}(\pt, \kk).\]
}
Note that $\rH^\bullet_{G(W)}(i_0^!\cL'_W) \simeq \rH^{G(W)}_\bullet(\pi^{-1}(0), \kk)$ is free over $\rH^\bullet_{G(W)}(\pt, \kk)$ by \cite[\S7.1]{Nak} and hence Lemma~\ref{Lem:eq_change} is applicable.
{Therefore, we have a natural isomorphism
\[ \hrH^{G(W)}_\bullet(\pi^{-1}(0), \kk) \otimes_{\hrH^\bullet_{G(W)}(\pt, \kk)}\hrH^\bullet_{T(W)}(\pt, \kk) \simeq \hrH^\bullet_{T(W)}(i_0^!\cL'_W). \]
The left-hand side
is isomorphic to $M(\varpi_W)[\![ u ]\!]$ via $\varphi'_W$.}
\end{proof}

\subsection{Tensor product}
Let $W, W' \in (\gV_\C)^\II$.
We identify the one-dimensional torus $T(W') \subset G(W')$ with the subtorus $\id_W \oplus \C^\times \id_{W'}$ of $G(W \oplus W')$. 
By \cite[Lemma~3.1]{VV}, the $T(W')$-fixed locus $\qv_0(W\oplus W')^{T(W')}$ is identical to $\qv_0(W) \times \qv_0(W')$.
Consider the attracting locus
\[ \fA^{\pm}(W, W') \seq \{ x \in \qv_0(W \oplus W') \mid \lim_{s \to 0} (\id_W \oplus s^{\pm 1} \id_{W'}) x \text{ exists.}\}, \]
which is Zariski closed by \cite[3.5]{VV}. 
We have $\fA^{\pm}(W, W') = \fA^{\mp}(W', W)$.
Consider the diagram
\begin{equation} \label{eq:diag_res}
\qv_0(W) \times \qv_0(W')\xleftarrow{p'_\pm} \fA^\pm(W, W') \xrightarrow{h'_\pm} \qv_0(W\oplus W'),
\end{equation}
where $h'_\pm$ is the inclusion and $p'_\pm(x) \seq \lim_{s \to 0} (\id_W \oplus s^{\pm 1} \id_{W'}) x${. Recall} the hyperbolic localization $(p'_{\pm})_! (h'_\pm)^* \simeq (p'_\mp)_* (j'_\mp)^!$ in the sense of Braden~\cite{Bra}.
By \cite[Lemma~4.1]{VV}, $(p'_{\pm})_! (h'_\pm)^* \cL'_{W \oplus W'}$ is a semisimple complex and
\begin{equation} \label{eq:VVisom}
{}^p \rH^\bullet((p'_{\pm})_! (h'_\pm)^* \cL'_{W \oplus W'}) \simeq \cL'_W \boxtimes \cL'_{W'}
 \end{equation}
 in $\Perv_{G(W) \times G(W')}(\qv_0(W) \times \qv_0(W'))$.
The isomorphism~\eqref{eq:VVisom} (together with \cite[Proposition 6.7.5]{Ach}) yields isomorphisms
\begin{align*} 
\hrH^\bullet_{G(W) \times G(W')}(i_0^!(p'_+)_! (h'_+)^*\cL'_{W\oplus W'}) &\simeq \hrH^\bullet_{G(W)}(i_0^!\cL'_{W}) \wh{\otimes} \hrH^\bullet_{G(W')}(i_0^!\cL'_{W'}),  \\
\hrH^\bullet_{G(W) \times G(W')}(i_0^!(p'_-)_! (h'_-)^*\cL'_{W\oplus W'}) &\simeq \hrH^\bullet_{G(W')}(i_0^!\cL'_{W'}) \wh{\otimes} \hrH^\bullet_{G(W)}(i_0^!\cL'_{W}), 
\end{align*}
where $\wh{\otimes}$ denotes the completed tensor product and $i_0$ denotes the inclusions of the origin (into suitable varieties).
A sheaf-theoretic interpretation of the results from \cite[{\S6}]{NakT} tells us that these isomorphisms are compatible with the structures of $U_q(\Lfg)$-modules, given through the homomorphism $\varphi'_{W \oplus W'}$ on the left-hand sides, and through $(\varphi'_W \otimes \varphi'_{W'}) \circ \Delta$ and $(\varphi'_{W'} \otimes \varphi'_{W}) \circ \Delta$ respectively on the right hand sides, where $\Delta$ is the coproduct of $U_q(\Lfg)$.
In particular, applying $\For_{T(W')}^{G(W) \times G(W')}$, we get the following from Theorem~\ref{Thm:standard}. 
(We can freely use Lemma~\ref{Lem:eq_change} here, as the freeness assumption is satisfied, see \cite[\S7.1]{Nak}, \cite[Theorem~3.10(1)]{NakT}.)
 
\begin{Thm}[Nakajima \cite{NakT}] 
\label{Thm:NakT}
We have 
\begin{align*} 
(\varphi'_{W \oplus W'})^* \hrH^\bullet_{T(W')}(i_0^!(p'_+)_! (h'_+)^*\cL'_{W\oplus W'}) & \simeq M(\varpi_W) \otimes (M(\varpi_{W'})[\![u]\!]), \\
(\varphi'_{W \oplus W'})^* \hrH^\bullet_{T(W')}(i_0^!(p'_-)_!(h'_-)^* \cL'_{W \oplus W'}) & \simeq (M(\varpi_{W'})[\![u]\!]) \otimes M(\varpi_W)
\end{align*}
as $U_q(\Lfg)[\![u]\!]$-modules, where the action of $u$ on the left-hand sides is given by  the product with a non-zero element of $\rH^2_{T(W')}(\pt, \kk)$.\end{Thm}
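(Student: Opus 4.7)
The plan is to assemble Theorem~\ref{Thm:NakT} from three ingredients already at hand: the hyperbolic localization identity \eqref{eq:VVisom} of Varagnolo--Vasserot, the geometric realization of deformed standard modules (Theorem~\ref{Thm:standard}), and the sheaf-theoretic interpretation of Nakajima's coproduct construction from \cite{NakT}.

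First I would apply $\hrH^\bullet_{G(W)\times G(W')}(i_0^!(-))$ to \eqref{eq:VVisom}. Since $i_0^!$ commutes with external products on the right-hand side, an equivariant Künneth formula, combined with Lemma~\ref{Lem:eq_change} (whose freeness hypothesis is satisfied by \cite[\S7.1]{Nak}, \cite[Theorem~3.10(1)]{NakT}), yields
\[
\hrH^\bullet_{G(W)\times G(W')}\bigl(i_0^!(p'_{\pm})_!(h'_{\pm})^* \cL'_{W\oplus W'}\bigr) \simeq \hrH^\bullet_{G(W)}(i_0^!\cL'_W) \wh{\otimes}\, \hrH^\bullet_{G(W')}(i_0^!\cL'_{W'}).
\]
Applying Theorem~\ref{Thm:standard} separately to $W$ and $W'$ identifies the two tensor factors with $M(\varpi_W)[\![u_W]\!]$ and $M(\varpi_{W'})[\![u_{W'}]\!]$ respectively, where $u_W$ (resp.\ $u_{W'}$) comes from a generator of $\rH^2_{T(W)}(\pt,\kk)$ (resp.\ $\rH^2_{T(W')}(\pt,\kk)$).

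Next I would verify that, on the left-hand side, the $U_q(\Lfg)$-action coming from $\varphi'_{W\oplus W'}$ matches the tensor product action $(\varphi'_W \otimes \varphi'_{W'}) \circ \Delta$ on the right-hand side for the choice $p'_-$, and $(\varphi'_{W'} \otimes \varphi'_W) \circ \Delta$ for $p'_+$. This compatibility is precisely the sheaf-theoretic content of \cite[Theorem~2]{NakT}: Nakajima's Drinfeld coproduct is manufactured so that hyperbolic localization along the attracting set $\fA^{\pm}(W,W')$ realizes it at the level of convolution algebras, with the two signs $\pm$ corresponding to the two possible orderings of the tensor factors. I would import this result rather than reprove it; the deepest geometric step is the identification of the convolution product on $\hrH^{G(W\oplus W')}_{\bullet}(Z^\bullet(W\oplus W'))$ restricted along the attracting correspondence with the coproduct $\Delta$, which is exactly what \cite{NakT} supplies.

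Finally, to descend from $G(W)\times G(W')$-equivariance to $T(W')$-equivariance, I would apply $\For^{G(W)\times G(W')}_{T(W')}$, again invoking Lemma~\ref{Lem:eq_change}. Under this restriction the $T(W)$-equivariance is forgotten, so the parameter $u_W$ specializes to $0$ and the first factor collapses from $M(\varpi_W)[\![u_W]\!]$ to $M(\varpi_W)$, while $u_{W'}$ survives as the single deformation parameter $u$ appearing in the statement. The completed tensor product therefore specializes to $M(\varpi_W) \otimes M(\varpi_{W'})[\![u]\!]$ in the $p'_-$ case and to $M(\varpi_{W'})[\![u]\!] \otimes M(\varpi_W)$ in the $p'_+$ case, matching the assertion. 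The main obstacle is the middle step, namely the sheaf-theoretic compatibility between hyperbolic localization and the Drinfeld coproduct; every other step is a Künneth-type manipulation or a specialization of the equivariance group.
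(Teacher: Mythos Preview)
Your proposal is correct and follows essentially the same route as the paper: both assemble the theorem from the Varagnolo--Vasserot hyperbolic localization identity \eqref{eq:VVisom}, the sheaf-theoretic interpretation of the coproduct from \cite{NakT}, and Theorem~\ref{Thm:standard}, then descend from $G(W)\times G(W')$ to $T(W')$ via Lemma~\ref{Lem:eq_change} with the same freeness references. The only cosmetic difference is the order of exposition (the paper states the K\"unneth isomorphisms first and invokes Theorem~\ref{Thm:standard} after restricting equivariance, whereas you identify the tensor factors earlier and then specialize $u_W \to 0$), but the logical content is the same.
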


\section{Proof of main theorem}
\label{Sec:proof}

In this section, we prove our main theorem (= Theorem~\ref{Thm:main}) applying the geometric construction reviewed in the previous section. 
In \S\ref{Ssec:NKQ}, we recall the key observation due to Kimura--Qin~\cite{KQ}, which enables us to translate the constructions with equivariant perverse sheaves on the graded quiver varieties to those on the space $X(w)$ of injective copresentations of the Dynkin quiver $Q_\xi$ through the Fourier--Laumon transformation explained in \S\ref{Ssec:FL}. 
Then, we obtain a sheaf theoretic interpretation of deformed simple modules and their tensor products in \S\S\ref{Ssec:LoverX}--\ref{Ssec:LLoverX}. 
In \S\ref{Ssec:slice}, we observe that the $E$-invariant appears as a transversal slice in $X(w)$. 
The proof ends in \S\ref{Ssec:proof}, where we have a sheaf theoretic interpretation of $R$-matrices in question under a certain condition~\eqref{eq:AssumpE}.

\subsection{Graded quiver varieties for $\Cc_{\xi,1}$} \label{Ssec:NKQ}
Recall that we have fixed a height function $\xi \colon \II \to \Z$ and the notations from \S\ref{Ssec:state}. 
For each vertex $i \in \II$, let $P_i \in \rep(\C Q_\xi)$ be a projective cover of $S_i$.
For each $w \in \NII$, we set 
\[ X'(w) \seq \Hom_{Q_\xi}(P^{w(1)}, I^{w(0)}),\]
where $P^{w(1)} \seq \bigoplus_{i \in \II}P_i^{\oplus w_i(1)}$.
Note that the vector spaces $X(w)$ and $X'(w)$ are dual to each other through the Nakayama functor {(cf.\ \cite[III.2.8]{ASS})}.
Moreover, the group $\Aut_{Q_\xi}(P^{w(1)})$ is naturally identified with $\Aut_{Q_\xi}(I^{w(1)})$ and hence the group $A(w)$ defined in \S\ref{Ssec:state} acts on $X'(w)$ as well. 
For $v \in \N^\II$, let $\Gr_v(I^{w(0)})$ denote the submodule Grassmannian of $I^{w(0)}$, 
which is a smooth connected projective variety by \cite[Theorem~4.10]{Rei}.
Consider the $A(w)$-variety
\[ F(v,w) \seq \{ (N, \psi) \in \Gr_v(I^{w(0)}) \times X'(w) \mid \Image \psi \subset N \},\]
together with the $A(w)$-equivariant projective morphism
\[ p_{v,w} \colon F(v,w) \to X'(w) \]
given by the second projection $(N, \psi) \mapsto \psi$. 

Given $w \in \NII$, we choose an  $\II$-tuple of $\Z$-graded vector spaces $W_\xi(w) =(W_\xi(w)_i)_{i \in \II} \in (\gV_\C)^\II$ satisfying
\[\gdim (W_\xi(w)_i) = w_i(0) t^{\xi(i)} + w_i(1) t^{\xi(i)+2}\]
for all $i \in \II$.
Note that $\Dom(W_\xi(w))  \subset  (\N t^{\xi(i)+1})^{\II}$ holds.
For $v = (v_i)_{i \in \II} \in \N^\II$, we put $vt^{\xi+1} \seq (v_i t^{\xi(i)+1})_{i \in \II}$.

The following observation due to Kimura--Qin \cite{KQ}, generalizing the one by Nakajima~\cite{NakCA}, is of fundamental importance in our discussion below.

\begin{Prop}[{Kimura--Qin \cite[Propositions~3.1.1 \& 3.1.4]{KQ}}] \label{Prop:NKQ}
For any $w \in \NII$ and $v \in \N^\II$, we have isomorphisms of varieties
\[ \qv_0(W_\xi(w)) \simeq X'(w), \quad \qv( vt^{\xi+1}, W_\xi(w)) \simeq F(v, w), \]
through which the morphism $\pi_{vt^{\xi+1}, W_\xi(w)}$  corresponds to the morphism $p_{v,w}$, and the actions of $G(W_\xi(w))$ correspond to the actions of the standard Levi subgroup $G(w) \seq \prod_{i \in \II, k\in \{0,1\}} \Aut_{Q_\xi}(I_i^{\oplus w_i(k)})$ of $A(w)$.
\end{Prop}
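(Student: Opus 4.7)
The plan is to translate the definition of $\qv(vt^{\xi+1}, W_\xi(w))$ into quiver-representation data via a degree analysis, and then to identify the resulting moduli with $F(v, w)$ and $X'(w)$ using the Nakayama duality between projectives and injectives of $\C Q_\xi$.

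First, I would work out which components of $\mathrm{M}^\bullet(V, W_\xi(w))$ can be non-zero under $\gdim(V_i) = v_i t^{\xi(i)+1}$. A degree $-1$ map $B_{j,i}\colon V_i \to V_j$ forces $\xi(j) = \xi(i) - 1$, which is precisely the condition for an arrow $i \to j$ to exist in $Q_\xi$; thus $(V,(B_{j,i}))$ becomes a $Q_\xi$-representation of dimension vector $v$. Similarly, $a_i$ must land in $W_\xi(w)_i^{\xi(i)}$ and $b_i$ must originate from $W_\xi(w)_i^{\xi(i)+2}$. Since $V_i$ is concentrated in a single degree, $\Hom^{-2}(V_i, V_i) = 0$, so the moment-map equation is vacuous and $\mu^{-1}(0) = \mathrm{M}^\bullet(V, W_\xi(w))$. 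Using $\Hom_{Q_\xi}(V, I^{w(0)}) \simeq \bigoplus_i \Hom_\C(V_i, \C^{w_i(0)})$ and $\Hom_{Q_\xi}(P^{w(1)}, V) \simeq \bigoplus_i \Hom_\C(\C^{w_i(1)}, V_i)$, the graded linear maps $a$ and $b$ determine unique $Q_\xi$-morphisms $\tilde{a}\colon V \to I^{w(0)}$ and $\tilde{b}\colon P^{w(1)} \to V$.

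Second, I would carry out the GIT quotient. Letting $V$ range over all graded dimensions of the form $vt^{\xi+1}$, the only $G(V)$-invariant built from $(\tilde{a}, \tilde{b})$ is the composition $\tilde{a}\circ \tilde{b} \in X'(w) = \Hom_{Q_\xi}(P^{w(1)}, I^{w(0)})$; every $\phi \in X'(w)$ is realized this way by taking $V$ sufficiently large (e.g.\ any intermediate factorization of $\phi$). This produces $\qv_0(W_\xi(w)) \simeq X'(w)$. For fixed dimension vector $v$, the stability condition translates into the injectivity of $\tilde{a}$; the map $(V,\tilde{a},\tilde{b}) \mapsto (\Image \tilde{a},\, \tilde{a}\circ \tilde{b})$ then descends to an isomorphism $\qv(vt^{\xi+1}, W_\xi(w)) \simeq F(v,w)$, whose inverse sends $(N,\psi)$ to the triple $V = N$, $\tilde{a}$ the inclusion $N \hookrightarrow I^{w(0)}$, and $\tilde{b}$ the corestriction of $\psi$ to $N$.

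Third, the projective morphism $\pi_{vt^{\xi+1}, W_\xi(w)}$ is, by construction, the forgetful assignment $(\tilde{a},\tilde{b}) \mapsto \tilde{a}\circ \tilde{b}$, which under the above identifications corresponds exactly to $p_{v,w}\colon (N,\psi) \mapsto \psi$. The action of $G(W_\xi(w)) = \prod_{i}(GL(W_\xi(w)_i^{\xi(i)}) \times GL(W_\xi(w)_i^{\xi(i)+2}))$ on multiplicity spaces transports to the Levi action $G(w) = \prod_i(\Aut_{Q_\xi}(I_i^{\oplus w_i(0)}) \times \Aut_{Q_\xi}(I_i^{\oplus w_i(1)})) \subset A(w)$, using $\End_{Q_\xi}(I_i) = \C$ (valid since $Q_\xi$ is Dynkin) and the Nakayama equivalence $\Aut_{Q_\xi}(I^{w(1)}) \simeq \Aut_{Q_\xi}(P^{w(1)})$. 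The main obstacle is verifying the equivalence between the stability condition and the injectivity of $\tilde{a}$: one must show that a socle-destabilizing graded subspace $V'_i \subset V_i$ killed by all outgoing $B_{j,i}$ corresponds, via the universal property of the injective envelope $I^{w(0)}$, to a non-zero element of $\Ker \tilde{a}$. This step relies on the hereditariness of $\C Q_\xi$ and Dynkin-type finiteness.
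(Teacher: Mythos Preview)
The paper does not supply its own proof of this proposition; it is quoted verbatim from \cite[Propositions~3.1.1 \& 3.1.4]{KQ} (which in turn extends Nakajima's bipartite case \cite{NakCA}). Your sketch reproduces that argument faithfully: the degree constraints on $V$ and $W_\xi(w)$ force half of the doubled-quiver maps $B_{j,i}$ and the entire moment-map target to vanish, so $\mu^{-1}(0)=\mathrm{M}^\bullet(V,W_\xi(w))$ is just the data of a $Q_\xi$-representation $V$ together with linear maps that assemble, via the defining adjunctions for $P_j$ and $I_i$, into $Q_\xi$-morphisms $\tilde a\colon V\to I^{w(0)}$ and $\tilde b\colon P^{w(1)}\to V$; the $G(V)$-invariants are generated by the matrix entries of $\tilde a\circ\tilde b\in X'(w)$ (first fundamental theorem for quiver invariants), and stability becomes injectivity of $\tilde a$.

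One small correction to your final paragraph: the equivalence between stability and injectivity of $\tilde a$ does not require Dynkin-type finiteness, nor even hereditariness. It follows immediately from the functorial isomorphism $\Hom_{Q_\xi}(M,I_i)\simeq M_i^*$: for any subrepresentation $V'\subset V$, the restriction $\tilde a|_{V'}$ corresponds to $(a_i|_{V'_i})_i$, so $\tilde a|_{V'}=0$ iff $a_i(V'_i)=0$ for all $i$. Hence a nonzero $B$-invariant collection $(V'_i)$ annihilated by all $a_i$ exists precisely when $\ker\tilde a\neq 0$. (The paper's phrasing of the stability condition appears to contain a typo; the intended condition is the standard Nakajima one: no nonzero $B$-stable $(V'_i)$ with $a_i(V'_i)=0$ for all $i$.) Hereditariness and the Dynkin hypothesis enter elsewhere---for instance, in the smoothness of $\Gr_v(I^{w(0)})$ and in the finiteness of $A(w)$-orbits on $X(w)$---but not in this particular step.
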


In what follows, we identify the variety $\qv_0(W_\xi(w))$ with the variety $X'(w)$ through the isomorphism in Proposition~\ref{Prop:NKQ}, and identify $G(w)$ with $G(W_\xi(w))$.
Note that the functor $\For_{G(w)}^{A(w)} \colon D^b_{A(w)}(X,\kk) \to D^b_{G(w)}(X, \kk)$ is fully faithful for any $A(w)$-variety $X$ (cf.~ \cite[Theorem~6.6.15]{Ach}).

\begin{Cor}
For any $w \in \NII$, the object $\cL'_{W_\xi(w)}$ is in the essential image of the functor $\For_{G(w)}^{A(w)} \colon \Perv_{A(w)}(X'(w),\kk) \to \Perv_{G(w)}(X'(w), \kk)$.
\end{Cor}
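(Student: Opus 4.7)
The plan is to use Proposition~\ref{Prop:NKQ} to realize $\cL'_{W_\xi(w)}$ as coming from a geometric construction on $X'(w)$ that is manifestly $A(w)$-equivariant (not merely $G(w)$-equivariant). First, I would note that the incidence variety
\[ F(v,w) = \{ (N,\psi) \in \Gr_v(I^{w(0)}) \times X'(w) \mid \Image \psi \subset N \} \]
carries a natural $A(w)$-action, where $\Aut_{Q_\xi}(I^{w(0)})$ acts on $\Gr_v(I^{w(0)})$ by left translation and on $X'(w)$ by post-composition, while $\Aut_{Q_\xi}(I^{w(1)}) = \Aut_{Q_\xi}(P^{w(1)})$ acts on $X'(w)$ by pre-composition. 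With this action, the morphism $p_{v,w} \colon F(v,w) \to X'(w)$ is tautologically $A(w)$-equivariant, and restricting along $G(w) \hookrightarrow A(w)$ recovers the $G(w)$-equivariant structure that corresponds, under the isomorphism of Proposition~\ref{Prop:NKQ}, to $\pi_{vt^{\xi+1}, W_\xi(w)}$.

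Next, since $F(v,w)$ is itself $A(w)$-equivariant, the constant sheaf $\uk_{F(v,w)}$ admits a (unique) canonical $A(w)$-equivariant structure, giving an object in $D^b_{A(w)}(F(v,w), \kk)$. Because $p_{v,w}$ is proper and $A(w)$-equivariant, the pushforward $(p_{v,w})_! \uk_{F(v,w)}$ lies naturally in $D^b_{A(w)}(X'(w), \kk)$, and its total perverse cohomology lifts
\[ {}^p \rH^\bullet\bigl((\pi_{vt^{\xi+1}, W_\xi(w)})_! \uk_{\qv(vt^{\xi+1}, W_\xi(w))}\bigr) \]
to an object of $\Perv_{A(w)}(X'(w), \kk)$. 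Summing over $v \in \N^\II$ (a finite sum since $\qv(vt^{\xi+1}, W_\xi(w))$ is non-empty only for finitely many $v$), I obtain an $A(w)$-equivariant perverse sheaf whose image under $\For_{G(w)}^{A(w)}$ agrees, via the identifications of Proposition~\ref{Prop:NKQ}, with $\cL'_{W_\xi(w)}$.

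The only subtlety is the compatibility of the Kimura--Qin identifications with the $A(w)$-action, rather than just the $G(w)$-action. I expect this to be routine but is worth spelling out: both $\qv_0(W_\xi(w))$ and $X'(w)$ are vector spaces on which the respective groups act linearly, and the additional part of $A(w)$ beyond $G(w)$ corresponds to the unipotent radical-type endomorphisms coming from $\Hom_{Q_\xi}(I_j, I_i)$ with $i \ne j$; these are already implicit in the construction of $\qv_0$ once one interprets closed points as injective copresentations (equivalently, via the Nakayama functor, as elements of $X'(w)$). So the main (minor) obstacle is verifying that the geometric isomorphism from \cite{KQ} intertwines the full $A(w)$-action with the natural symmetries of the Nakajima side; once this is in hand, the equivariant lift of the sheaf follows formally from the functoriality of the proper pushforward and of perverse truncation in the equivariant derived category.
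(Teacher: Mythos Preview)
Your approach is correct and essentially the same as the paper's: both exploit that $F(v,w)$ and $p_{v,w}$ are naturally $A(w)$-equivariant. Two remarks are in order, however.

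First, your concern in the last paragraph is a red herring. The graded quiver variety $\qv_0(W_\xi(w))$ carries no natural action of $A(w)$ beyond that of its Levi $G(w)=G(W_\xi(w))$, so there is nothing for the Kimura--Qin isomorphism to intertwine on that side. You only need the $G(w)$-equivariance already supplied by Proposition~\ref{Prop:NKQ} to conclude that your $A(w)$-equivariant construction, once pushed through $\For^{A(w)}_{G(w)}$, is isomorphic to $\cL'_{W_\xi(w)}$ in $\Perv_{G(w)}(X'(w),\kk)$; no upgrade of the isomorphism is required.

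Second, the paper packages the argument slightly differently. Rather than summing the pushforwards over all $v$ and matching the result with $\cL'_{W_\xi(w)}$, it uses the decomposition~\eqref{eq:DT} together with the isomorphism~\eqref{eq:isomreg} to observe that each simple summand $\IC_{vt^{\xi+1},W_\xi(w)}$ (for $vt^{\xi+1}\in\Dom(W_\xi(w))$) already appears as a direct summand of a shift of the $A(w)$-equivariant object $(p_{v,w})_!\uk_{F(v,w)}$, and then invokes the full faithfulness of $\For^{A(w)}_{G(w)}$ (recorded just before the Corollary) to lift direct summands. This sidesteps the one point you leave implicit, namely that your direct sum over $v\in\N^\II$ actually accounts for all of $\cL'_{W_\xi(w)}$, i.e., that no $\bv$ outside $(\N t^{\xi(i)+1})^\II$ contributes to $\qv(W_\xi(w))$.
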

\begin{proof}
Recall the decomposition~\eqref{eq:DT}.
For any $vt^{\xi + 1} \in \Dom(W_\xi(w))$, we know that the simple perverse sheaf $\IC_{vt^{\xi+1}, W_\xi(w)}$ appears as a direct summand of a shift of $(p_{v, w})_! \uk_{F(v,w)} \in D^b_{A(w)}(X'(w),\kk)$ thanks to Proposition~\ref{Prop:NKQ} and the isomorphism~\eqref{eq:isomreg}.
Thus $\IC_{vt^{\xi+1}, W_\xi(w)}$ is in fact $A(w)$-equivariant and so is $\cL'_{W_\xi(w)}$.
\end{proof}

In particular, the functor $\For_{G(w)}^{A(w)}$ gives a $\kk$-algebra isomorphism:
\begin{equation} \label{eq:GA}
\Hom^\bullet_{G(w)}(\cL'_{W_\xi(w)}, \cL'_{W_\xi(w)})^\wedge \simeq 
\Hom^\bullet_{A(w)}(\cL'_{W_\xi(w)}, \cL'_{W_\xi(w)})^\wedge.
\end{equation} 

\subsection{Fourier--Laumon transform}
\label{Ssec:FL}
We regard the vector spaces $X'(w)$ as an $(A(w) \times \C^\times)$-variety, where $\C^\times$ acts simply by the scalar multiplication.
Note that this action factors through the surjective homomorphism 
\begin{equation} \label{eq:ACtoA}
A(w) \times \C^\times \to A(w) \quad \text{given by $(g,s) \mapsto g \cdot (\id_{P^{w(1)}}, s \id_{I^{w(0)}})$.}
\end{equation} 
The space $X(w) = X'(w)^*$ is also viewed an $(A(w) \times \C^\times)$-variety via the dual action.
Consider the $A(w)$-equivariant Fourier--Laumon transform 
\[ \Phi_{X'(w)} \colon D^b_{A(w) \times \C^\times}(X'(w), \kk) \xrightarrow{\simeq} D^b_{A(w) \times \C^\times}(X(w), \kk)\]
introduced in \cite{Lau} (see also \cite[\S6.9]{Ach}).
We define 
\[\cL_w \seq \Phi_w(\cL'_{W_\xi(w)}), \quad \text{where $\Phi_w \seq \For_{A(w)}^{A(w) \times \C^\times} \circ \Phi_{X'(w)} \circ \Infl_{A(w)}^{A(w) \times \C^\times}$}.\]
Since $\Phi_w$ sends $A(w)$-equivariant simple perverse sheaves on $X'(w)$ bijectively to the ones on $X(w)$ {(cf.\ \cite[Theorems 6.9.9 and 6.9.12]{Ach})}, $\cL_w$ is an $A(w)$-equivariant semisimple perverse sheaf on $X(w)$.
Letting $\IC_{v,w} \seq \Phi_w(\IC_{vt^{\xi+1}, W_\xi(w)})$ and $L_{v,w} \seq L_{vt^{\xi+1}, W_\xi(w)}$, 
the functor $\Phi_w$ translates \eqref{eq:DT} into
\[\cL_w \simeq \bigoplus_v L_{v,w} \boxtimes \IC_{v,w}
\]
where $v$ runs over the set of elements $v \in \N^\II$ satisfying $vt^{\xi+1} \in \Dom (W_\xi(w))$. 
Since $X(w)$ has finitely many $A(w)$-orbits and the stabilizer $\Aut(\phi)$ of each closed point $\phi$ is connected,  the intersection cohomology complexes of orbit closures exhaust the simple $A(w)$-equivariant perverse sheaves on $X(w)$. 
When $v=0$, we have 
\[\IC_{0,w} = \Phi_w(\uk_{\{0\}}) \simeq \uk_{X(w)}[\dim X(w)].\]

We define a $\kk$-algebra homomorphism $\varphi_w \colon U_q(\Lfg) \to \Hom^\bullet_{A(w)}(\cL_w, \cL_w)^\wedge$ to be the following composition:
\begin{align*}
\varphi_w \colon U_q(\Lfg) & \xrightarrow{\varphi'_{W_\xi(w)}} \Hom^\bullet_{G(w)}(\cL'_{W_\xi(w)}, \cL'_{W_\xi(w)})^\wedge \\ &\xrightarrow{\eqref{eq:GA}} \Hom^\bullet_{A(w)}(\cL'_{W_\xi(w)}, \cL'_{W_\xi(w)})^\wedge \xrightarrow{\Phi_w} \Hom^\bullet_{A(w)}(\cL_w, \cL_w)^\wedge. \end{align*}

\subsection{Deformed simple modules}
\label{Ssec:LoverX}
For the sake of brevity, we set 
\[{T(w) \seq T(W_\xi(w)) \subset G(w) = G(W_\xi(w)) \quad \text{and} \quad M_\xi(w) \seq M(\varpi_{W_\xi(w)}).}\] 
The latter is compatible with the notation~\eqref{eq:Lxi} as we have $L_\xi(w) = L(\varpi_{W_\xi(w)})$ (compare \eqref{eq:Lxi} with $\eqref{eq:varpiW}$).
Recall the generic element $\phi_\xi(w) \in X(w)$ from Definition~\ref{Def:phiw}.
For a closed point $\phi \in X(w)$, let $i_{\phi} \colon \{ \phi\} \to X(w)$ denote the inclusion. 

\begin{Prop} \label{Prop:LoverX}
We have
\[ 
\varphi_w^*\hrH^\bullet_{T(w)}(\cL_w) \simeq M_\xi(w)[\![ u ]\!], 
\quad \varphi_w^*\hrH^\bullet_{T(w)}(i_{\phi_\xi(w)}^*\cL_w) \simeq L_\xi(w)[\![u]\!]
\]
as $U_q(\Lfg)[\![u]\!]$-modules, where the action of $u$ on the left-hand sides is given by  the product with a non-zero element of $\rH^2_{T(w)}(\pt, \kk)$.\end{Prop}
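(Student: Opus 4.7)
The plan is to reduce both isomorphisms to the corresponding results of Nakajima on the dual variety $X'(w)=\qv_0(W_\xi(w))$, transported back to $X(w)$ via the Fourier--Laumon transform $\Phi_w$ and the equivariance change \eqref{eq:GA}. The key ingredient is the identification
\[
\IC_{0,w}=\Phi_w(\uk_{\{0\}})\simeq \uk_{X(w)}[\dim X(w)],
\]
already recorded above. Combined with the fully-faithfulness of $\Phi_w$ and the adjunction $(i_{0,*},i_0^!)$ for the closed inclusion $i_0\colon\{0\}\hookrightarrow X'(w)$, it produces an $A(w)$-equivariant chain of isomorphisms
\[
\Hom^\bullet_{T(w)}\!\bigl(\uk_{X(w)},\cL_w\bigr) \simeq \Hom^\bullet_{T(w)}\!\bigl(\Phi_w(\uk_{\{0\}}),\cL_w\bigr) \simeq \Hom^\bullet_{T(w)}\!\bigl(\uk_{\{0\}},\cL'_{W_\xi(w)}\bigr)
\]
modulo a $\Z$-grading shift by $\dim X(w)$, where the last equality is induced by the Fourier equivalence. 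Completing and using adjunction identifies the right-hand side with $\hrH^\bullet_{T(w)}(i_0^!\cL'_{W_\xi(w)})$. On the two sides the $U_q(\Lfg)$-action is given respectively by $\varphi_w$ and $\varphi'_{W_\xi(w)}$, and they are intertwined exactly because $\varphi_w$ is defined as $\Phi_w\circ\eqref{eq:GA}\circ\varphi'_{W_\xi(w)}$. Invoking Theorem~\ref{Thm:standard} then yields the first claimed isomorphism $\varphi_w^*\hrH^\bullet_{T(w)}(\cL_w)\simeq M(\varpi_{W_\xi(w)})[\![u]\!]=M_\xi(w)[\![u]\!]$.

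For the second isomorphism, I would restrict the decomposition $\cL_w\simeq\bigoplus_v L_{v,w}\boxtimes\IC_{v,w}$ to the generic point $\phi_\xi(w)$. Since the $A(w)$-action on $X(w)$ has finitely many orbits with connected stabilizers, each simple $A(w)$-equivariant perverse sheaf $\IC_{v,w}$ is the IC-extension of the constant sheaf on some orbit closure: the summand $\IC_{0,w}\simeq\uk_{X(w)}[\dim X(w)]$ is the IC of the open orbit $A(w)\cdot\phi_\xi(w)$, while for $v\neq 0$ the support of $\IC_{v,w}$ is a proper orbit closure and therefore does not contain $\phi_\xi(w)$. Hence only the $v=0$ term survives, giving
\[
i_{\phi_\xi(w)}^*\cL_w \simeq L_{0,w}\otimes_\kk \uk_{\{\phi_\xi(w)\}}[\dim X(w)].
\]
Taking $\hrH^\bullet_{T(w)}$ and using that $T(w)$ acts trivially on $X(w)$, so that $\hrH^\bullet_{T(w)}(\{\phi_\xi(w)\})\simeq \kk[\![u]\!]$, yields $L_{0,w}\otimes_\kk\kk[\![u]\!]$ up to a grading shift. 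Theorem~\ref{Thm:Nak} applied with $W=W_\xi(w)$ and $\bv=0$ identifies $\varphi_w^*L_{0,w}$ with $L\!\bigl(\prod_{i\in\II}\varpi_{i,q^{\xi(i)}}^{w_i(0)}\varpi_{i,q^{\xi(i)+2}}^{w_i(1)}\bigr)=L_\xi(w)$, completing the proof.

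The substance of the argument lies in the bookkeeping rather than in producing new content: one needs to check that (i) the $U_q(\Lfg)$-action and the equivariance change \eqref{eq:GA} are carried through $\Phi_w$ exactly as encoded in the definition of $\varphi_w$; (ii) the freeness hypotheses of Lemma~\ref{Lem:eq_change}, needed whenever restriction from $A(w)$ or $G(w)$ to $T(w)$ is invoked, are available on the dual side by the results recalled in the proof of Theorem~\ref{Thm:standard} and transported through $\Phi_w$; and (iii) the $u$-action, coming from $\rH^2_{T(w)}(\pt,\kk)$, is automatically preserved since $\Phi_w$ is $T(w)$-equivariant and $u$ is pulled back from $BT(w)$, independent of the underlying variety. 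The cohomological shifts by $\dim X(w)$ that appear throughout become invisible once we pass to ungraded $U_q(\Lfg)[\![u]\!]$-modules, which is all the statement asks for.
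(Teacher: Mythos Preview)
Your treatment of the first isomorphism is fine and matches the paper: the Fourier--Laumon transform sends $\uk_{\{0\}}$ to $\uk_{X(w)}[\dim X(w)]$, so $\hrH^\bullet_{T(w)}(\cL_w)\simeq\hrH^\bullet_{T(w)}(i_0^!\cL'_{W_\xi(w)})$ and Theorem~\ref{Thm:standard} finishes.

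The second isomorphism, however, has a genuine gap. You correctly compute $i_{\phi_\xi(w)}^*\cL_w\simeq L_{0,w}\otimes\uk[\dim X(w)]$ and hence $\hrH^\bullet_{T(w)}(i_{\phi_\xi(w)}^*\cL_w)\simeq L_{0,w}\otimes\kk[\![u]\!]$ as $\kk[\![u]\!]$-modules, and Theorem~\ref{Thm:Nak} indeed gives $\varphi_w^*L_{0,w}\simeq L_\xi(w)$. But this only pins down the $U_q(\Lfg)$-action \emph{modulo $u$}. The homomorphism $\varphi_w$ lands in the \emph{completed} Yoneda algebra, so a generator $x\in U_q(\Lfg)$ acts on $L_{0,w}\otimes\kk[\![u]\!]$ by an infinite sum $\sum_k g_k\otimes u^k$ with $g_k\in\End_\kk(L_{0,w})$, and you have said nothing about the higher terms $g_k$ for $k>0$. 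The target $L_\xi(w)[\![u]\!]$ carries a very specific $u$-deformed action (e.g.\ $x_{i,n}^\pm$ acts with a factor $e^{nu}$), and an arbitrary $\kk[\![u]\!]$-free tower of self-extensions of $L_\xi(w)$ need not be isomorphic to it as a $U_q(\Lfg)[\![u]\!]$-module. So ``Theorem~\ref{Thm:Nak} $\Rightarrow$ done'' is not a proof.

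The paper closes this gap by leveraging the first isomorphism. The adjunction map $a\colon \hrH^\bullet_{T(w)}(\cL_w)\to\hrH^\bullet_{T(w)}(i_{\phi_\xi(w)}^*\cL_w)$ is a surjection of $U_q(\Lfg)[\![u]\!]$-modules; under the first isomorphism its source is $M_\xi(w)[\![u]\!]$. One then compares $a$ with the canonical quotient $c\colon M_\xi(w)[\![u]\!]\twoheadrightarrow L_\xi(w)[\![u]\!]$: since $\Ker c = N_\xi(w)[\![u]\!]$ has no composition factor isomorphic to $L_\xi(w)$, while the target of $a$ has \emph{only} such composition factors, $a$ must factor through $c$. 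A length count modulo $u^n$ then forces the induced map $L_\xi(w)[\![u]\!]/(u^n)\to\hrH^\bullet_{T(w)}(i_{\phi_\xi(w)}^*\cL_w)/(u^n)$ to be an isomorphism for every $n$, and one passes to the limit. This step is not bookkeeping; it is precisely what identifies the $u$-deformed $U_q(\Lfg)$-action, and it is missing from your argument.
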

\begin{proof} 
In this proof, we abbreviate $T(w)$, $X(w)$, $i_{\phi_\xi(w)}$ as $T$, $X$, $i$ respectively.
The first isomorphism follows from Theorem~\ref{Thm:standard} through the transform $\Phi_w$. In fact, as $\Phi_w(\uk_{\{0\}}) \simeq \uk_X[\dim X]$, we have
\[ \hrH^\bullet_{T}(i_0^!\cL'_{W_\xi(w)}) = \Hom^\bullet_T(\uk_{\{0\}}, \cL'_{W_\xi(w)})^\wedge \overset{\Phi_w}{\simeq} 
\Hom^\bullet_T(\uk_X, \cL_{w})^\wedge \simeq \hrH^\bullet_T(\cL_w).\]

We shall show the second isomorphism.
The functor $i^*$ yields a homomorphism
\[\hrH^\bullet_{T}(\cL_w) = \Hom_{T}^\bullet(\uk_{X}, \cL_w)^\wedge \xrightarrow{a} \Hom_{T}^\bullet(i^*\uk_{X}, i^*\cL_w)^\wedge = \hrH_{T}^\bullet(i^*\cL_w)\]
of modules over the Yoneda algebra $\Hom_{A(w)}^\bullet(\cL_w, \cL_w)^\wedge$.
We recall that
$A(w) \phi_\xi(w)$ is the unique open $A(w)$-orbit in $X(w)$ and the stabilizer of $\phi_\xi(w)$ is connected.
Therefore, the constant perverse sheaf $\uk_{X}[\dim X]$ is the unique simple object of $\Perv_{A(w)}(X(w))$ whose stalk at $\phi_\xi(w)$ is non-zero.
Thus, we have  $i^*\IC_{v,w} = 0$ if $v \neq 0$ and hence $i^* \cL_w 
= L_{0,w} \boxtimes i^*\uk_{X}[\dim X]$.
Now, we see that
$\hrH_{T}^\bullet(i^*\cL_w) \simeq L_{0,w} \otimes \kk[\![u]\!]$ as $\kk[\![u]\!]$-modules, and that $a$ is surjective as its restriction to the summand $L_{0,w} \boxtimes \IC_{0, w} \subset \cL_w$ yields an isomorphism.
As a $U_q(\Lfg)$-module, $\hrH_{T}^\bullet(i^*\cL_w)$ is a limit of iterated self-extensions of the simple module $L_\xi(w)$ {since $\varphi_w^*L_{0,w} \simeq L_\xi(w)$ by the last assertion of Theorem~\ref{Thm:Nak}}.

Let $N_\xi(w)$ denote the kernel of the quotient homomorphism $M_\xi(w) \to L_\xi(w)$.
The $U_q(\Lfg)$-module $N_\xi(w)$ does not contain $L_\xi(w)$ as its composition factor {because $N_\xi(w)$ does not have the $\ell$-highest weight vector of $M_\xi(w)$}. 
Since $M \mapsto M [\![u]\!]$ is an exact functor, we have a short exact sequence
\[ 0 \to N_\xi(w)[\![u]\!] \xrightarrow{b} M_\xi(w)[\![u]\!] \xrightarrow{c} L_\xi(w)[\![u]\!] \to 0\]
of $U_q(\Lfg)[\![u]\!]$-modules. 

We compare the homomorphisms $a$ and $c$.
For any positive integer $n$, we consider the base change from $\kk[\![u]\!]$ to the truncated polynomial ring $\kk [\![u]\!]/(u^n)$ to obtain the solid arrows in the following diagram:
\[
\xymatrix{
N_\xi(w)[\![u]\!]/(u^n) \  \ar@{^{(}->}[r]^-{b_n}& M_\xi(w)[\![u]\!]/(u^n) \ar@{->>}[r]^-{c_n} \ar[d]^-{\simeq}& L_\xi(w)[\![u]\!]/(u^n) \ar@{-->}[d]^-{\theta_n} \\
& \hrH^\bullet_{T}(\cL_w) / u^n \hrH^\bullet_{T}(\cL_w) \ar@{->>}[r]^-{a_n}& \hrH^\bullet_{T}(i^*\cL_w) / u^n \hrH^\bullet_{T}(i^*\cL_w), 
}
\]
where the upper row is exact.
We know that both $L_\xi(w)[\![u]\!]/(u^n) $ and $\hrH^\bullet_{T}(i^*\cL_w) / u^n \hrH^\bullet_{T}(i^*\cL_w)  {\simeq L_{0,w}\otimes \kk[\![u]\!]/(u^n)}$ are iterated self-extensions of $L_\xi(w)$ of the same length $n$ (as $U_q(\Lfg)$-modules) and that the image of $b_n$ does not contain $L_\xi(w)$ as a composition factor. 
Therefore, there exists a unique isomorphism $\theta_n$ of $U_q(\Lfg)$-modules represented by the dashed arrow in the diagram. 
Taking the limit $n \to \infty$, we get the desired isomorphism of $U_q(\Lfg)[\![u]\!]$-modules.
\end{proof}

\subsection{Tensor product}
\label{Ssec:LLoverX}
Let $w, w' \in \NII$. 
We fix decompositions $I^{(w+w')(k)} = I^{w(k)} \oplus I^{w'(k)}$, $k \in \{ 0,1\}$, to get
\begin{equation} \label{eq:dec_X} 
X(w+w') = X(w) \oplus X(w') \oplus X(w,w')^+ \oplus X(w,w')^-,
\end{equation}
where $X(w,w')^+ \seq \Hom_{Q_\xi}(I^{w(0)}, I^{w'(1)})$ and $X(w,w')^- \seq \Hom_{Q_\xi}(I^{w'(0)}, I^{w(1)})$.
Then we have $X(w+w')^{T(w')} = X(w) \oplus X(w')$.
Consider the diagram
\begin{equation} \label{eq:diag_X}
X(w) \oplus X(w') \xleftarrow{p_\pm} X(w)\oplus X(w')\oplus X(w,w')^{\pm} \xrightarrow{h_\pm} X(w+w'),
\end{equation}
where $h_\pm$ is the inclusion and $p_\pm$ is the projection.

\begin{Prop} \label{Prop:LLoverX}
Let $\phi \seq \phi_\xi(w) \oplus \phi_\xi(w') \in X(w) \oplus X(w')$. 
We have 
\begin{align*} 
\varphi_{w+w'}^* \hrH^\bullet_{T(w')}(i_{\phi}^*(p_+)_! (h_+)^*\cL_{w+w'}) & \simeq L_\xi(w) \otimes (L_\xi(w')[\![u]\!]), \\
\varphi_{w+w'}^* \hrH^\bullet_{T(w')}(i_{\phi}^*(p_-)_! (h_-)^* \cL_{w+w'}) & \simeq (L_\xi(w')[\![u]\!]) \otimes L_\xi(w).
\end{align*}
as $U_q(\Lfg)[\![u]\!]$-modules, where the action of $u$ on the left-hand side is given by  the product with a non-zero element of $\rH^2_{T(w')}(\pt, \kk)$.\end{Prop}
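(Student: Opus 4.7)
Our plan is to follow the two-step pattern of Proposition~\ref{Prop:LoverX}, now applied to the hyperbolic-localization complex
\begin{equation*}
\cG_\pm \seq (p_\pm)_!(h_\pm)^* \cL_{w+w'}
\end{equation*}
on $X(w) \oplus X(w')$, in place of $\cL_w$ on $X(w)$. The first step will establish a ``standard-module'' version,
\begin{equation*}
\varphi_{w+w'}^* \hrH^\bullet_{T(w')}(\cG_+) \simeq M_\xi(w) \otimes M_\xi(w')[\![u]\!],
\end{equation*}
and analogously for $\cG_-$ with the opposite tensor order. The second step restricts to the generic point $\phi = \phi_\xi(w) \oplus \phi_\xi(w')$ to upgrade this to the desired simple-module statement.

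For the first step, I would transport Theorem~\ref{Thm:NakT} through the Fourier--Laumon transform $\Phi_{w+w'}$ using two ingredients. First, $\Phi_{w+w'}(\uk_{\{0\}}) \simeq \uk_{X(w+w')}[\dim X(w+w')]$, which converts the operation $i_0^!$ on the $\qv_0$-side into plain cohomology over $X(w) \oplus X(w')$ on the $X$-side (exactly as in the proof of Proposition~\ref{Prop:LoverX}). Second, $\Phi_{w+w'}$ intertwines the hyperbolic-localization diagrams~\eqref{eq:diag_res} and~\eqref{eq:diag_X} with the sign exchange $\pm \leftrightarrow \mp$, owing to weight reversal under linear duality of the $T(w')$-action. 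This intertwining is the main technical step; it can be verified by expressing the attracting loci as linear subspaces of the ambient vector spaces and applying the standard behaviour of $\Phi$ on closed embeddings and projections of linear subspaces (cf.~\cite[\S6.9]{Ach}). The compatibility of signs matches perfectly with the conventions in Theorem~\ref{Thm:NakT} and the statement we wish to prove.

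For the second step, I would analyse $i_\phi^* \cG_\pm$. By Braden's theorem, $\cG_\pm$ is a semisimple $(A(w) \times A(w'))$-equivariant complex on $X(w) \oplus X(w')$ (up to shift), whose simple summands are $\IC$ sheaves of closures of $(A(w) \times A(w'))$-orbits. Since $\phi$ lies in the unique open orbit $A(w)\phi_\xi(w) \times A(w')\phi_\xi(w')$, only the constant-sheaf summand has non-zero stalk there, giving
\begin{equation*}
i_\phi^* \cG_\pm \simeq K_\pm \boxtimes i_\phi^* \uk_{X(w)\oplus X(w')}[\dim(X(w)\oplus X(w'))]
\end{equation*}
for some finite-dimensional multiplicity space $K_\pm$. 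Hence $\hrH^\bullet_{T(w')}(i_\phi^* \cG_\pm) \simeq K_\pm \otimes \kk[\![u]\!]$ as $\kk[\![u]\!]$-modules. Viewing this as a $U_q(\Lfg)[\![u]\!]$-module via $\varphi_{w+w'}$ and repeating the truncated-coefficients diagram chase from the end of the proof of Proposition~\ref{Prop:LoverX}, using the short exact sequences $0 \to M_\xi(w) \otimes N_\xi(w') \to M_\xi(w) \otimes M_\xi(w') \to M_\xi(w) \otimes L_\xi(w') \to 0$ and analogously for the first tensor factor, we identify the limit with $L_\xi(w) \otimes L_\xi(w')[\![u]\!]$ (respectively $L_\xi(w')[\![u]\!] \otimes L_\xi(w)$) as claimed.

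The main obstacle is the Fourier--Laumon compatibility with hyperbolic localization in the first step, which requires careful bookkeeping of the $T(w')$-weights on the mutually dual spaces $X'(w+w')$ and $X(w+w')$. The final diagram chase is essentially parallel to that of Proposition~\ref{Prop:LoverX}, once one knows that the intermediate tensor products $M_\xi(w) \otimes N_\xi(w')$ and $N_\xi(w) \otimes L_\xi(w')$ share no $L_\xi(w) \otimes L_\xi(w')$ composition factor with the quotient, which can be extracted from the $\ell$-highest-weight structure of standard modules and the commutativity of $K(\Cc)$.
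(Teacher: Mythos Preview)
Your Step~1 is correct and is exactly what the paper does: transport the hyperbolic localization of Theorem~\ref{Thm:NakT} through $\Phi_{w+w'}$ using the compatibility of the Fourier--Laumon transform with linear inclusions and projections (the paper makes this precise via the identity~\eqref{eq:ph1} plus a base change~\eqref{eq:ph2}, which together produce the $\pm \leftrightarrow \mp$ swap you anticipate).

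Your Step~2, however, has a genuine gap. The diagram chase at the end of Proposition~\ref{Prop:LoverX} works because $L_\xi(w)$ is \emph{simple}: the target of the restriction map is an iterated self-extension of that single simple, and the kernel $N_\xi(w)$ has no $L_\xi(w)$ composition factor, so $a_n \circ b_n = 0$ is forced. In the tensor product situation $L_\xi(w) \otimes L_\xi(w')$ is typically \emph{not} simple, and your substitute hypothesis --- that $M_\xi(w) \otimes N_\xi(w')$ and $N_\xi(w) \otimes L_\xi(w')$ share no composition factor with $L_\xi(w) \otimes L_\xi(w')$ --- does not follow from the $\ell$-highest weight structure or from commutativity of $K(\Cc)$. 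A simple module can perfectly well occur as a composition factor of two different products $L_{\bv,w}\otimes L_{\bv',w'}$. Even if you could force the map to factor, you would still need to know $\dim K_\pm = \dim L_\xi(w)\cdot \dim L_\xi(w')$ to conclude that $\theta_n$ is an isomorphism, and you have no independent access to $\dim K_\pm$.

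The paper sidesteps all of this by using the Varagnolo--Vasserot isomorphism~\eqref{eq:VVisom} (which is already what makes Theorem~\ref{Thm:NakT} work) at the \emph{sheaf} level rather than only on cohomology: after Fourier transport one has $(p_\pm)_!(h_\pm)^*\cL_{w+w'} \simeq \cL_w \boxtimes \cL_{w'}$ up to shift, compatibly with the $U_q(\Lfg)$-actions. Since $i_\phi^* \simeq i_{\phi_\xi(w)}^* \boxtimes i_{\phi_\xi(w')}^*$, one simply applies the second isomorphism of Proposition~\ref{Prop:LoverX} to each box factor separately. No composition-factor bookkeeping is needed; the reduction to the already-proved single-factor case is immediate once you exploit the external product structure rather than trying to repeat the LoverX argument from scratch.
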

\begin{proof}
The assertion follows from Theorem~\ref{Thm:NakT}, Proposition~\ref{Prop:LoverX} and an analog of \cite[Proposition~10.1.2]{LusB}. 
For completeness, we give some details. 
Consider the decomposition 
\[ X'(w+w') = X'(w) \oplus X'(w') \oplus X'(w,w')^+ \oplus X'(w,w')^-,\]
where $X'(w,w')^+ \seq \Hom_{Q_\xi}(P^{w(1)}, I^{w'(0)})$ and $X'(w,w')^- \seq \Hom_{Q_\xi}(P^{w'(1)}, I^{w(0)})$.
Then we have $X'(w+w')^{T(w')} = X'(w) \oplus X'(w')$.
Under the identification $X'(w+w') = \qv_0(W_\xi(w+w'))$ in Proposition~\ref{Prop:NKQ}, we have
\[ \fA^\pm(W_\xi(w), W_\xi(w')) = X'(w) \oplus X'(w') \oplus X'(w,w')^\pm,\]
and $p'_\pm$ in \eqref{eq:diag_res} is the projection to $X'(w) \oplus X'(w')$.
Through the Nakayama functor, $X(w,w')^\pm$ is dual to $X'(w,w')^\mp$.
The dual of the diagram~\eqref{eq:diag_res} is identified with 
\[ X(w) \oplus X(w') \xrightarrow{{}^t p'_\pm} X(w) \oplus X(w') \oplus X(w,w')^\mp \xleftarrow{{}^t h'_\pm} X(w+w') \]
where ${}^t p'_\pm$ is the inclusion and ${}^t h'_\pm$ is the projection with respect to the decomposition~\eqref{eq:dec_X}.
By \cite[Proposition 6.9.13]{Ach}, we have
\begin{equation}  \label{eq:ph1}
(\Phi_w \boxtimes \Phi_{w'}) \circ (p'_\pm)_! \circ (h'_\pm)^* \simeq ({}^t p'_\pm)^* \circ ({}^t h'_\pm)_! \circ \Phi_{w+w'} [-d_\pm],
\end{equation}
where $d_\pm \seq \dim X(w,w')^{\pm} - \dim X(w,w')^\mp$. 
Since the diagram
\[ \xymatrix{
X(w) \oplus X(w') \oplus X(w,w')^\pm \ar[r]^-{h_\pm} \ar[d]_-{p_\pm} & X(w+w') \ar[d]^-{{}^t h'_\pm} \\
X(w) \oplus X(w') \ar[r]^-{{}^t p'_\pm} & X(w) \oplus X(w') \oplus X(w,w')^\mp
}\]
is cartesian, we have the base change isomorphism 
\begin{equation} \label{eq:ph2}
({}^t p'_\pm)^* \circ ({}^t h'_\pm)_! \simeq (p_\pm)_! \circ (h_\pm)^*.
\end{equation}
Combining \eqref{eq:ph1} with \eqref{eq:ph2}, we see that the Fourier--Laumon transform induces an isomorphism
\[
\hrH^\bullet_{T(w')}(i_0^!(p'_\pm)_! (h'_\pm)^*\cL'_{W_\xi(w+w')}) \simeq 
\hrH^\bullet_{T(w')}((p_\pm)_! (h_\pm)^*\cL_{w+w'}).
\]
By Theorem~\ref{Thm:NakT} and the first isomorphism in Proposition~\ref{Prop:LoverX} (or rather, by the discussion before Theorem~\ref{Thm:NakT}), we get an isomorphism
\[ \varphi_{w+w'}^* \hrH^\bullet_{T(w')}((p_+)_! (h_+)^*\cL_{w+w'}) \simeq 
\varphi_w^* \rH^\bullet(\cL_w) \otimes \varphi_{w'}^*\hrH^\bullet_{T(w')}(\cL_{w'}) 
 \]
of $U_q(\Lfg)[\![u]\!]$-modules.
Applying the functor $i_\phi^* \simeq i_{\phi_\xi(w)}^* \boxtimes i_{\phi_\xi(w')}^*$, 
we obtain 
\[ \varphi_{w+w'}^* \hrH^\bullet_{T(w')}(i_\phi^*(p_+)_! (h_+)^*\cL_{w+w'}) \simeq 
\varphi_w^* \rH^\bullet(i_{\phi_\xi(w)}^*\cL_w) \otimes \varphi_{w'}^*\hrH^\bullet_{T(w')}(i_{\phi_\xi(w')}^*\cL_{w'}). 
 \]
Together with the second isomorphism in Proposition~\ref{Prop:LoverX}, we get the first desired isomorphism. 
The other isomorphism is verified similarly.
\end{proof}

\subsection{Slice and E-invariant}
\label{Ssec:slice}
{Now we explain that the $E$-invariant comes into play as a transversal slice in the space $X(w)$.}

{Note that $\End_{Q_\xi}(I^{w(0)}) \oplus \End_{Q_\xi}(I^{w(1)})$ is the Lie algebra of $A(w)$  for any $w \in \NII$.} 
For any closed point $\phi \in X(w)$, we have an $A(w)$-equivariant linear map 
\[ f_\phi \colon \End_{Q_\xi}(I^{w(0)}) \oplus \End_{Q_\xi}(I^{w(1)}) \to X(w) \quad \text{given by $f_\phi(a,b) \seq b \circ \phi - \phi \circ a$.} \]
This is equal to the differential of the action map $A(w) \ni g \mapsto g \cdot \phi \in X(w)$ at $g = 1$ {(cf.\ \cite[p.215]{DF})}.
By the definition of the $E$-invariant, we have 
\[ X(w)/\Image f_\phi = E(\phi, \phi). \]
In particular, $\phi$ is rigid (i.e., $E(\phi, \phi) = 0$) if and only if the $A(w)$-orbit of $\phi$ is open in $X(w)$, that is when $\phi \simeq \phi_\xi(w)$.

In what follows, we consider the special case when 
$\phi = \phi_\xi(w) \oplus \phi_\xi(w')$ as in the previous section.
The decomposition~\eqref{eq:dec_X} induces the corresponding decomposition of $E(\phi, \phi)$. Namely,
letting 
\[\ep \colon X(w+w') \to X(w+w')/\Image f_\phi = E(\phi, \phi)\]
be the quotient map, we have
\begin{align*}
&\ep(X(w)) {=} E(\phi_\xi(w), \phi_\xi(w)) = 0, &&  \ep(X(w')) {=} E(\phi_\xi(w'), \phi_\xi(w')) = 0, \\
&\ep(X(w, w')^+) {=} E(\phi_\xi(w), \phi_\xi(w')), &&  \ep(X(w, w')^-) {=} E(\phi_\xi(w'), \phi_\xi(w)). 
\end{align*}
Choose a linear subspace $E^{\pm}$ of $X(w,w')^{\pm}$ stable under the action of the torus $T(w) \times T(w')$ such that the map $\ep$ restricts to isomorphisms $E^+ \simeq E(\phi_\xi(w), \phi_\xi(w'))$ and $E^- \simeq E(\phi_\xi(w'), \phi_\xi(w))$ respectively.
We define
\[ S \seq \phi + (E^+ \oplus E^-), \quad S^\pm \seq \phi + E^\pm, \]
which are affine subspaces of $X(w+w')$ stable under the action of the torus  $T(w) \times T(w')$. Let
\[ \{ \phi\} \xrightarrow{i_\pm} S^\pm \xrightarrow{j_\pm} S \xrightarrow{i_S} X(w+w')\]
denote the inclusions. 

\begin{Lem} \label{Lem:Sres}
With the above notation, we have a natural isomorphism 
\[ i_\phi^* (p _\pm)_! h_\pm^* \simeq i_{\pm}^! j_{\pm}^* i_S^* [{-}e_\pm] \]
of functors from $D^b_{A(w+w')}(X(w+w'),\kk)$ to $D^b_{T(w) \times T(w')}(\{\phi\}, \kk)$, where 
\[e_\pm \seq 2(\dim X(w,w)^\pm - \dim E^\pm).\]
\end{Lem}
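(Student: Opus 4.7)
The plan is to reduce the claim to Braden's contraction principle \cite{Bra} via a transverse-slice argument, chaining together three ingredients. First, since $p_\pm$ is the projection of a trivial vector bundle with fiber $X(w,w')^\pm$ and $h_\pm$ is the tautological closed embedding, proper base change identifies $i_\phi^*(p_\pm)_! h_\pm^* \mathcal{F}$ with the compactly supported cohomology of the restriction of $\mathcal{F}$ to the linear fiber $\phi + X(w,w')^\pm \subset X(w+w')$, as an object of $D^b_{T(w)\times T(w')}(\{\phi\},\kk)$. Second, the torus $T(w')$ acts linearly on $S^\pm = \phi + E^\pm$ with all weights of a single sign and unique fixed point $\phi$, so the contracting case of Braden's theorem yields a natural isomorphism between the compactly supported cohomology of $j_\pm^* i_S^* \mathcal{F} = \mathcal{F}|_{S^\pm}$ on $S^\pm$ and $i_\pm^! j_\pm^* i_S^* \mathcal{F}$.

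The key third step is to bridge these two compactly supported cohomologies by a shift of the expected magnitude $|e_\pm|$, using the $A(w+w')$-equivariance of $\mathcal{F}$. Choose a $T(w)\times T(w')$-stable linear complement $F^\pm$ of $E^\pm$ inside $X(w,w')^\pm$; then $F^\pm \subset \Image f_\phi$ by the defining property of $E^\pm$, and complete reducibility of the action of $T(w)\times T(w')$ provides a linear, equivariant section $\sigma\colon F^\pm \to \Lie A(w+w')$ with $f_\phi\circ\sigma$ equal to the inclusion $F^\pm \hookrightarrow X(w+w')$. The action-type map
\[ \alpha \colon F^\pm \times S^\pm \to X(w+w'), \quad (v, s) \mapsto \exp(\sigma(v)) \cdot s, \]
is $T(w) \times T(w')$-equivariant and étale at $(0, \phi)$, and the $A(w+w')$-equivariance of $\mathcal{F}$ produces a natural isomorphism $\alpha^* \mathcal{F} \simeq p_2^*(\mathcal{F}|_{S^\pm})$, where $p_2\colon F^\pm \times S^\pm \to S^\pm$ is the projection. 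Combined with the Künneth formula and the computation of the compactly supported cohomology of the constant sheaf on the affine space $F^\pm$, this gives the desired identification with the shift by $\dim F^\pm = e_\pm/2$ in each degree (up to an overall sign convention for the shift).

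The technical crux is globalising the isomorphism $\alpha^* \mathcal{F} \simeq p_2^*(\mathcal{F}|_{S^\pm})$: \emph{a priori} $\alpha$ is merely étale in a neighborhood of $(0,\phi)$, so this identification is initially only formal or analytic, and one must propagate it across the full linear fiber $\phi + X(w,w')^\pm$ in a way compatible with the $T(w)\times T(w')$-equivariance and with compactly supported cohomology. This can be handled either by a Luna-type slice theorem for $A(w+w')$-equivariant constructible sheaves, or by exploiting the contracting $T(w')$-action on both $\phi + X(w,w')^\pm$ and $F^\pm \times S^\pm$ to reduce to a formal calculation at the fixed point $\phi$. The remaining ingredients (proper base change, Künneth, and hyperbolic localization) are standard.
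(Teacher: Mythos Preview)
Your overall strategy---proper base change for step~1, the contraction lemma on $S^\pm$ for step~2, and an equivariance/K\"unneth bridge for step~3---is sound and is essentially what the paper has in mind: the paper itself does not spell out an argument but refers to \cite[Lemma~7.7]{FH}, whose proof runs along these same lines.

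There is, however, a genuine gap in your third step. The map $\alpha\colon F^\pm\times S^\pm\to X(w+w')$ cannot be \'etale at $(0,\phi)$: the source has dimension $\dim X(w,w')^\pm$ while the target has dimension $\dim X(w+w')$, and these differ. Moreover, the isomorphism $\alpha^*\mathcal F\simeq p_2^*(\mathcal F|_{S^\pm})$ is already \emph{global} on $F^\pm\times S^\pm$---it is obtained by pulling back the equivariance datum $a^*\mathcal F\simeq p^*\mathcal F$ (with $a$ the action map and $p$ the second projection $A(w+w')\times X(w+w')\to X(w+w')$) along $(v,s)\mapsto(\exp\sigma(v),s)$---so the ``technical crux'' you flag is not where the difficulty lies. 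The real missing link is that nothing in your construction relates the compactly supported cohomology of $\alpha^*\mathcal F$ on $F^\pm\times S^\pm$ to that of $\mathcal F$ on the affine fiber $\phi+X(w,w')^\pm$, because for a generic section $\sigma$ the image of $\alpha$ is not contained in that fiber.

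The fix is to choose $\sigma$ more carefully. Since $\phi=\phi_\xi(w)\oplus\phi_\xi(w')$ is block-diagonal, the derivation $f_\phi$ respects the block decompositions of $\Lie A(w+w')$ and of $X(w+w')$; in particular $F^\pm=\Image f_\phi\cap X(w,w')^\pm$ is exactly the image of $f_\phi$ restricted to the off-diagonal block $\Hom_{Q_\xi}(I^{w'(0)},I^{w(0)})\oplus\Hom_{Q_\xi}(I^{w'(1)},I^{w(1)})$ (for the $+$ sign; the opposite block for $-$). Taking a $T(w)\times T(w')$-equivariant $\sigma$ with values in that block, the exponential $\exp(\sigma(v))$ is unipotent block-triangular and a two-line matrix computation yields $\alpha(v,\phi+e)=\phi+v+e$. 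Thus $\alpha$ is the tautological linear isomorphism $F^\pm\times S^\pm\xrightarrow{\ \sim\ }\phi+X(w,w')^\pm$, the equivariance identification is global, and your K\"unneth argument finishes the proof with nothing left to ``globalise''. (Your hedge on the sign of the shift is prudent; the shift coming from $H^\bullet_c$ of the affine space $F^\pm$ is $2\dim F^\pm=e_\pm$, not $e_\pm/2$, and the resulting overall sign is immaterial for the application in Proposition~\ref{Prop:LLoverS}.)
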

\begin{proof}
A proof can be parallel to that of \cite[Lemma~7.7]{FH}.
{For the sake of completeness, we give the details.
In this proof, we put $T \seq T(w) \times T(w')$ and $X^\pm \seq X(w,w')^\pm$
for simplicity.
We have the commutative diagram:
\begin{equation} \label{eq:diagS}
\vcenter{
\xymatrix{
X(w)\oplus X(w')  
& \ar@{->}[l]_-{p_\pm} X(w)\oplus X(w')\oplus X^\pm \ar@{->}[r]^-{h_\pm}
& X(w+w') 
\\
\{\phi\} \ar@{->}[u]_-{i_\phi} \ar@<-3pt>@{->}[r]_-{i_2}
& \ar@<-3pt>@{->}[l]_-{p} \phi+X^\pm \ar@{->}[u]_-{i_1} 
& 
\\
\{\phi\} \ar@{=}[u] \ar@{->}[r]^-{i_\pm}
&  S^\pm=\phi+E^\pm\ar@{->}[r]^-{j_\pm} \ar@{->}[u]_-{i_3}
& S. \ar@{->}[uu]_{i_S} 
}}
\end{equation}
Here the arrow $p$ is the obvious map, and the arrows $i_1, i_2, i_3$ are the inclusions.
Note that the upper left square and the right square are both cartesian.
All the varieties in the diagram~\eqref{eq:diagS} are stable under the action of the $2$-dimensional torus $T$ and all the morphisms in the diagram~\eqref{eq:diagS} are $T$-equivariant.
By the base change and \cite[Proposition 2.3]{FW}, we have
\begin{equation} \label{eq:basechange}
i_\phi^*(p_\pm)_!h_\pm^*\simeq p_!i_1^*h_\pm^*\simeq i_2^!i_1^*h_\pm^*\simeq i_\pm^!i_3^!i_1^*h_\pm^*.
\end{equation}
Let $U^\pm$ be the unipotent subgroup of $A(w+w')$ whose Lie algebra is $H^\pm$, where 
\[ H^+ \seq \bigoplus_{k \in \{0,1\}}\Hom_{Q_\xi}(I^{w(k)},I^{w'(k)}), \quad 
H^- \seq \bigoplus_{k\in\{0,1\}}\Hom_{Q_\xi}(I^{w'(k)}, I^{w(k)}).\]
Both the varieties $X(w)\oplus X(w')\oplus X^\pm $ and $\phi+X^\pm$ are stable under the action of $U^\pm$, and hence they are $(U^\pm \rtimes T)$-varieties.
In particular, for any $\mathcal{F} \in D^b_{A(w+w')}(X(w+w'), \kk)$, the $*$-restriction $i_1^*h_\pm^* \mathcal{F}$ can be seen as an object of $D^b_{U^\pm \rtimes T}(\phi+X^\pm, \kk)$.
We shall show a natural isomorphism
\begin{equation} \label{eq:i3}
i_3^*  \simeq i_3^!  [e_\pm] 
\end{equation}
as functors from $D^b_{U^\pm \rtimes T}(\phi + X^\pm, \kk)$ to $D^b_{T}(S^\pm, \kk)$.
Consider the factorization $i_3 = \pi_3 \circ s_3$:
\[
\xymatrix{
 S^\pm \ar[rd]_-{s_3} \ar[rr]^-{i_3} & & \phi + X^\pm, \\
 & U^\pm \times S^\pm \ar[ur]_-{\pi_3} &
}
\]
where $s_3$ and $\pi_3$ are $T$-equivariant morphisms defined by $s_3 (x) \seq (1, x)$ and $\pi_3(g, x) \seq g\cdot x$.
The morphism $\pi_3$ is a locally trivial fibration. 
Indeed, its differential at the point $(1, \phi)$ is naturally identified with the linear map 
\[ H^\pm \oplus E^\pm \to X^\pm \quad \text{given by $(a,b,\psi) \mapsto f_\phi(a,b) + \psi$} \]
in the above notation.
This is surjective by our choice of $E^\pm$. 
Since the action of $\C^\times$ given via the cocharacter $\C ^\times \ni s \mapsto s^{\pm1} \id_{W_\xi(w')} \in T(w')$ contracts the variety $U^\pm \times S^\pm$ (resp.~$\phi+X^\pm$) to the single point $(1, \phi)$ (resp.~$\phi$), it follows that the morphism $\pi_3$ is surjective and its differential is surjective at any points.
Thus, $\pi_3$ is a locally trivial fibration with smooth fibers, and hence we have 
\[ \pi_3^* \simeq \pi_3^![2(\dim X^\pm - \dim (U^\pm \times S^\pm))]\]
as functors from $D^b_{U^\pm \rtimes T}(\phi+X^\pm, \kk)$ to $D^b_{U^\pm \rtimes T}(U^\pm \times S^\pm, \kk)$.
On the other hand, we have the induction equivalence 
\[ s_3^* \simeq s_3^! [2 (\dim (U^\pm \times S^\pm) - \dim S^\pm)] \colon D^b_{U^\pm \rtimes T}(U^\pm \times S^\pm, \kk) \xrightarrow{\sim} D^b_{T}(S^\pm, \kk).\]
Combining the above isomorphisms with the natural isomorphisms $i_3^* \simeq s_3^* \pi_3^*$ and $i_3^! \simeq s_3^! \pi_3^!$, we arrive at the isomorphism \eqref{eq:i3}.
Now, combining \eqref{eq:basechange} with \eqref{eq:i3}, we obtain
\[ i_\phi^*(p_\pm)_!h_\pm^*\simeq i_\pm^!i_3^!i_1^*h_\pm^*\simeq  i_\pm^!i_3^*i_1^*h_\pm^* [-e_\pm] \simeq i_{\pm}^! j_{\pm}^* i_S^* [-e_\pm]\]
as desired.}
\end{proof}

For $\mathcal{F} \in D^b_{A(w+w')}(X(w+w'), \kk)$, we define
\[\mathcal{F}|_S \seq i_S^* \mathcal{F}[\dim S - \dim X(w+w')].\]

\begin{Prop} 
\label{Prop:LLoverS}
With the above notation, we have 
\begin{align*} 
\varphi_{w+w'}^* \hrH^\bullet_{T(w')}((i_+)^! (j_+)^*\cL_{w+w'}|_S) & \simeq L_\xi(w) \otimes (L_\xi(w')[\![u]\!]), \\
\varphi_{w+w'}^* \hrH^\bullet_{T(w')}((i_-)^! (j_-)^* \cL_{w+w'}|_S) & \simeq (L_\xi(w')[\![u]\!]) \otimes L_\xi(w).
\end{align*}
as $U_q(\Lfg)[\![u]\!]$-modules.
\end{Prop}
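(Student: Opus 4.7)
The plan is to deduce this proposition from Proposition~\ref{Prop:LLoverX} via the slice identification of Lemma~\ref{Lem:Sres}. First, applying Lemma~\ref{Lem:Sres} to $\cL_{w+w'}$ and substituting the definition $\cL_{w+w'}|_S = i_S^* \cL_{w+w'}[\dim S - \dim X(w+w')]$, one obtains a natural isomorphism
\[ (i_\pm)^!(j_\pm)^* \cL_{w+w'}|_S \simeq i_\phi^* (p_\pm)_! (h_\pm)^* \cL_{w+w'}[k_\pm] \]
in $D^b_{T(w) \times T(w')}(\{\phi\}, \kk)$, for a certain integer $k_\pm$ expressed in terms of $e_\pm$ and the dimensions of $S$ and $X(w+w')$.

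Next, apply $\hrH^\bullet_{T(w')}$ to both sides. A cohomological shift $[k]$ on the right merely re-indexes the grading of $\hrH^\bullet_{T(w')}$ but leaves the underlying $\kk[\![u]\!]$-module structure intact: as an ungraded module, $\prod_n V_{n+k}$ equals $\prod_n V_n$, and the action of $u$ (multiplication by a fixed degree-two element of $\rH^\bullet_{T(w')}(\pt, \kk)$) is unchanged under such re-indexing. Moreover, the $U_q(\Lfg)$-action on either side is induced via $\varphi_{w+w'}$ from the Yoneda action of $\End^\bullet_{A(w+w')}(\cL_{w+w'})^\wedge$, and being natural in its sheaf argument, this action commutes with the isomorphism of Lemma~\ref{Lem:Sres}. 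Consequently, one obtains an isomorphism of $U_q(\Lfg)[\![u]\!]$-modules
\[ \hrH^\bullet_{T(w')}\bigl((i_\pm)^!(j_\pm)^* \cL_{w+w'}|_S\bigr) \simeq \hrH^\bullet_{T(w')}\bigl(i_\phi^* (p_\pm)_! (h_\pm)^* \cL_{w+w'}\bigr). \]

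The right-hand side is computed by Proposition~\ref{Prop:LLoverX}, yielding $L_\xi(w) \otimes L_\xi(w')[\![u]\!]$ and $L_\xi(w')[\![u]\!] \otimes L_\xi(w)$ respectively. The substantive geometric content—the identification of the transversal slice $S$ together with its $T(w')$-attracting and -repelling parts $E^\pm$ as the $E$-invariant directions at the point $\phi$—has already been absorbed into Lemma~\ref{Lem:Sres}. The remaining task of transporting the $U_q(\Lfg)[\![u]\!]$-module structure across the slice is purely formal; the only point requiring care is the verification that the intrinsic cohomological shift does not corrupt the module structure, which is handled by the re-indexing observation above together with the naturality of the Yoneda action.
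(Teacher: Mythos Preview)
Your proposal is correct and follows essentially the same route as the paper's proof, which simply cites Proposition~\ref{Prop:LLoverX} and Lemma~\ref{Lem:Sres} without further comment. You have merely spelled out the bookkeeping (the shift arising from the definition of $\cL_{w+w'}|_S$ combined with the shift $[e_\pm]$ in Lemma~\ref{Lem:Sres}, and the compatibility of the Yoneda action with natural isomorphisms), which the paper leaves implicit.
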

\begin{proof}
The assertion follows from Proposition~\ref{Prop:LLoverX} and Lemma~\ref{Lem:Sres}.
{Note that the shift $[-e_\pm]$ appearing in Lemma~\ref{Lem:Sres} is irrelevant here since we forget the grading by taking the direct products of cohomologies over all the degrees.}
\end{proof}

\begin{Cor}
\label{Cor:fact}
Let $w,w' \in \NII$.
If we have 
\[{E(\phi_\xi(w), \phi_\xi(w')) = E(\phi_\xi(w'), \phi_\xi(w)) = 0,}\] then
$L_\xi(w)$ and $L_\xi(w')$ strongly commute. 
In particular, any simple module $L_\xi(w)$ of the category $\Cc_{\xi, 1}$ is real.
\end{Cor}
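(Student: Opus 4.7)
The plan is to exploit the vanishing hypothesis to collapse the transversal slice $S$ of \S\ref{Ssec:slice} to a single point and then to read off strong commutation from a sheaf-theoretic identification with the simple module $L_\xi(w+w')$.

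First, I would observe that under the hypothesis $E(\phi_\xi(w), \phi_\xi(w')) = E(\phi_\xi(w'), \phi_\xi(w)) = 0$, the subspaces $E^\pm$ constructed in \S\ref{Ssec:slice} both vanish, so $S = S^+ = S^- = \{\phi\}$ where $\phi \seq \phi_\xi(w) \oplus \phi_\xi(w')$. The additivity of $E$-invariants along direct sums, combined with the rigidity of $\phi_\xi(w)$ and $\phi_\xi(w')$, gives $E(\phi, \phi) = 0$, so $\phi$ lies in the open $A(w+w')$-orbit of $X(w+w')$ and is isomorphic to $\phi_\xi(w+w')$. Since each $\IC_{v, w+w'}$ for $v \neq 0$ is supported on the complement of this open orbit while $\IC_{0, w+w'} \simeq \uk_{X(w+w')}[\dim X(w+w')]$, the decomposition~\eqref{eq:DT} yields $i_\phi^* \cL_{w+w'} \simeq L_{0, w+w'}[\dim X(w+w')]$. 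Combined with the shift in $\cL_{w+w'}|_S = i_S^*\cL_{w+w'}[\dim S - \dim X(w+w')]$ and the fact that $i_\pm, j_\pm$ are identity maps on the one-point space $S$, this reduces $(i_\pm)^!(j_\pm)^*\cL_{w+w'}|_S$ to the skyscraper sheaf $L_{0, w+w'}$ at $\phi$.

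Next, I would take $T(w')$-equivariant cohomology to obtain $L_{0, w+w'} \otimes \kk[\![u]\!]$, and Proposition~\ref{Prop:LLoverS} would then produce isomorphisms
\[
L_\xi(w) \otimes L_\xi(w')[\![u]\!] \simeq L_{0, w+w'} \otimes \kk[\![u]\!] \simeq L_\xi(w')[\![u]\!] \otimes L_\xi(w)
\]
of $U_q(\Lfg)[\![u]\!]$-modules. Specializing at $u = 0$ gives $L_\xi(w) \otimes L_\xi(w') \simeq L_{0, w+w'}$ as $U_q(\Lfg)$-modules, and by Theorem~\ref{Thm:Nak} the right-hand side is the simple module $L_\xi(w+w')$; hence $L_\xi(w) \otimes L_\xi(w')$ is simple, that is, $L_\xi(w)$ and $L_\xi(w')$ strongly commute. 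The ``in particular'' clause is then the case $w = w'$, since $E(\phi_\xi(w), \phi_\xi(w)) = 0$ holds by the very definition of $\phi_\xi(w)$ as a point in the open (rigid) orbit.

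The only point demanding any care will be verifying that the $U_q(\Lfg)$-action appearing at $u = 0$ on $L_{0, w+w'}$, coming from Proposition~\ref{Prop:LLoverS}, agrees with the one given by Theorem~\ref{Thm:Nak}; this is automatic because both actions are defined through the same algebra homomorphism $\varphi_{w+w'}$.
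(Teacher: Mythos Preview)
Your proof is correct and follows essentially the same route as the paper: collapse $S$ to the point $\phi\simeq\phi_\xi(w+w')$ and compare the two descriptions of the stalk of $\cL_{w+w'}$ there via Proposition~\ref{Prop:LLoverS}. The only cosmetic difference is that you unpack the stalk computation $i_\phi^*\cL_{w+w'}\simeq L_{0,w+w'}[\dim X(w+w')]$ and invoke Theorem~\ref{Thm:Nak} directly, whereas the paper simply cites Proposition~\ref{Prop:LoverX}, whose proof already contains exactly that computation.
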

\begin{proof}
Under the assumption, we have $S = S^\pm = \{ \phi_\xi(w+w')\}$, and therefore $(i_+)^! (j_+)^*\cL_{w+w'}|_S$ coincides with $i_{\phi_\xi(w+w')}^*\cL_{w+w'}$ up to a shift.
Then Propositions~\ref{Prop:LoverX} \& \ref{Prop:LLoverS} yield
\[ L_\xi(w) \otimes L_\xi(w') \simeq \varphi_{w+w'}^*\rH^\bullet(i^*_{\phi_\xi(w+w')}\cL_{w+w'}) \simeq L_\xi(w+w').
\qedhere\]
\end{proof}

Note that the above Corollary~\ref{Cor:fact} and its converse follow from Theorem~\ref{Thm:mcat} as well, although we do not rely on it in our proof. 

\begin{Lem} \label{Lem:transv}
Assume that we have
\begin{equation} \label{eq:AssumpE}
E(\phi_\xi(w), \phi_\xi(w')) = 0 \quad \text{or} \quad E(\phi_\xi(w'), \phi_\xi(w)) = 0. 
\end{equation}
Then $S$ meets $A(w+w')$-orbits in $X(w+w')$ transversally. 
Moreover, we have $S \cap A(w+w')\phi = \{ \phi\}$ with $\phi = \phi_\xi(w) \oplus \phi_\xi(w')$.
\end{Lem}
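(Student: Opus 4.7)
We may assume without loss of generality that $E(\phi_\xi(w), \phi_\xi(w')) = 0$, the other case being symmetric after swapping the roles of $w$ and $w'$ (which interchanges $X(w,w')^+$ and $X(w,w')^-$). Then $E^- \simeq E(\phi_\xi(w), \phi_\xi(w')) = 0$, so the slice reduces to $S = \phi + E^+$. Since the diagonal summands $E(\phi_\xi(w), \phi_\xi(w))$ and $E(\phi_\xi(w'), \phi_\xi(w'))$ vanish by rigidity, we obtain $E(\phi, \phi) \simeq E(\phi_\xi(w'), \phi_\xi(w))$, and the isomorphism $E^+ \simeq E(\phi_\xi(w'), \phi_\xi(w)) = E(\phi, \phi)$ shows that $E^+$ is complementary to $\Image f_\phi$ in $X(w+w')$. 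In particular, $S$ is transversal to the orbit $A(w+w')\phi$ at the point $\phi$, and $\dim S + \dim A(w+w')\phi = \dim X(w+w')$.

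The key input is a contracting torus action. The one-parameter torus $T(w') = \C^\times \id_{W'} \subset A(w+w')$ fixes $\phi \in X(w) \oplus X(w')$ and acts on $X(w,w')^+$ by the nontrivial character $s \mapsto s^{-1}$ (direct computation using the block decomposition \eqref{eq:dec_X}). Therefore $T(w')$ preserves $S = \phi + E^+$ and contracts every point of $S$ to $\phi$ as $s \to \infty$.

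For the transversality claim, let $U \subset S$ be the subset where the action map $\mu \colon A(w+w') \times S \to X(w+w')$ is smooth at $(1, \phi')$; this coincides with the transversality locus $\{\phi' \in S \mid \Image f_{\phi'} + E^+ = X(w+w')\}$. Since the smooth locus of $\mu$ is Zariski open and stable under left $A(w+w')$-translation in the first factor, it is of the form $A(w+w') \times U$, whence $U$ is Zariski open in $S$; by the first paragraph $\phi \in U$. Moreover $U$ is $T(w')$-invariant, as $T(w') \subset A(w+w')$ preserves $S$ and permutes the $A(w+w')$-orbits. Hence $S \setminus U$ is a closed $T(w')$-invariant subset of $S$; if it were nonempty, then for any $\phi' \in S \setminus U$ we would have $\overline{T(w') \cdot \phi'} \subset S \setminus U$ by closedness and invariance, while the contracting property forces $\phi \in \overline{T(w') \cdot \phi'}$, contradicting $\phi \in U$. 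Therefore $U = S$.

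For the moreover part, transversality at $\phi$ together with $\dim S + \dim A(w+w')\phi = \dim X(w+w')$ implies that $\phi$ is an isolated point of $S \cap A(w+w')\phi$: there is a neighborhood $V$ of $\phi$ in $X(w+w')$ with $V \cap S \cap A(w+w')\phi = \{\phi\}$. Any hypothetical $\phi' \in S \cap A(w+w')\phi$ distinct from $\phi$ would, via the contracting $T(w')$-action, yield some $s \in T(w')$ with $s \cdot \phi' \in V \cap S \cap A(w+w')\phi = \{\phi\}$, forcing $\phi' = s^{-1} \cdot \phi = \phi$, a contradiction. The one nonroutine step in this plan is pinning down the contracting $T(w')$-action on $S$, but this is essentially immediate from the block structure; the rest follows by standard open/closed manipulations exploiting $T(w')$-invariance.
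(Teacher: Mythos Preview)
Your proof is correct and follows essentially the same strategy as the paper's own proof: reduce by symmetry to the case $E^-=0$, observe that the one-parameter torus $T(w')$ contracts $S=\phi+E^+$ to $\phi$, establish transversality at $\phi$ from $\Image f_\phi \oplus E^+ = X(w+w')$, and then use the contracting action to propagate transversality to all of $S$ and to deduce the set-theoretic intersection statement. The paper is terser---it cites Gan--Ginzburg for the propagation step and Chriss--Ginzburg, Proposition~3.7.15, for the ``moreover'' part---whereas you have spelled out both arguments in full via the open/closed manipulation with $U$ and the isolated-point plus contraction trick; but the content is the same.
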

\begin{proof}
Our discussion here mimics that of \cite[2.2]{GG} for the Slodowy slice.
By the symmetry, we may assume $E(\phi_\xi(w), \phi_\xi(w')) = 0$. Then, we have $E^+= \{0\}$ and $S = S^-$.
In particular, the action of the torus $T(w)$ contracts the whole $S$ to the unique fixed point $\phi$. 

Note that the differential $\mathrm{d} \alpha$ of the action map $\alpha \colon A(w+w') \times S \to X(w+w')$ at $(1,\phi)$ is identical to the map
\[ \End_{Q_\xi}(I^{(w+w')(0)}) \oplus \End_{Q_\xi}(I^{(w+w')(1)}) \oplus E \to X(w+w')\]
given by $(a,b,\psi) \mapsto f_\phi(a,b)+\psi$.
Since $\Image f_\phi \oplus E = X(w+w')$, this is surjective.  
Using the contracting action of the torus $T(w)$, we can conclude that the differential $\mathrm{d} \alpha$ is surjective at $(1,x)$ for any $x \in S$. 
This implies the first assertion.
The last assertion follows from an argument analogous to the proof of \cite[Proposition~3.7.15]{CG}.
\end{proof}

The following proposition plays a key role in the proof of our main theorem in the next subsection.

\begin{Prop} \label{Prop:key}
Under the assumption~\eqref{eq:AssumpE}, $\cL_{w+w'}|_S$ is a semisimple perverse sheaf on $S$ containing both $\uk_{S}[\dim S]$ and $\uk_{\{ \phi \}}$ as summands, where $\phi \seq \phi_\xi(w) \oplus \phi_\xi(w')$ as above.
\end{Prop}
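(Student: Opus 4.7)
The plan is to combine the semisimple decomposition of $\cL_{w+w'}$ into shifted IC sheaves of $A(w+w')$-orbit closures (obtained from the Fourier--Laumon transform of \eqref{eq:DT}) with the transversality of $S$ supplied by Lemma~\ref{Lem:transv}. The two claimed summands $\uk_S[\dim S]$ and $\uk_{\{\phi\}}$ should come, respectively, from the IC sheaf of the open orbit of $X(w+w')$ and the IC sheaf of the orbit closure $\overline{A(w+w')\phi}$.

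First I would record the decomposition $\cL_{w+w'} \simeq \bigoplus_{\mathcal{O}} M_\mathcal{O} \boxtimes \IC_{\overline{\mathcal{O}}}$, where $\mathcal{O}$ runs over the $A(w+w')$-orbits in $X(w+w')$ corresponding under $\Phi_{w+w'}$ to the strata $\qv_0(\bv, W_\xi(w+w'))^{reg}$ with $\bv \in \Dom(W_\xi(w+w'))$, and each multiplicity space $M_\mathcal{O}$ is non-zero. Under \eqref{eq:AssumpE}, Lemma~\ref{Lem:transv} says that $S$ meets every $A(w+w')$-orbit in $X(w+w')$ transversally. The standard slicing result for transversal intersections then shows that the functor $(-)|_S = i_S^*[\dim S - \dim X(w+w')]$ sends each $\IC_{\overline{\mathcal{O}}}$ to a shift of the IC sheaf of the transverse intersection $\overline{\mathcal{O}} \cap S$, hence to a semisimple perverse sheaf on $S$. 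Summing, $\cL_{w+w'}|_S$ is a semisimple perverse sheaf.

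It remains to identify the two relevant summands. The open orbit is $A(w+w')\phi_\xi(w+w')$, which is dense in the smooth variety $X(w+w')$, so its IC sheaf is $\uk_{X(w+w')}[\dim X(w+w')]$ and $(-)|_S$ sends it to $\uk_S[\dim S]$. For the orbit $\mathcal{O} = A(w+w')\phi$, note that $\dim\mathcal{O} = \dim X(w+w') - \dim S$ because $S = \phi + (E^+\oplus E^-)$ has dimension equal to the codimension of $\mathcal{O}$. Then, for any strictly smaller orbit $\mathcal{O}' \subsetneq \overline{\mathcal{O}}$, transversality forces
\[ \dim(\mathcal{O}' \cap S) = \dim \mathcal{O}' + \dim S - \dim X(w+w') < 0, \]
so $\mathcal{O}' \cap S = \varnothing$. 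Combined with $\mathcal{O} \cap S = \{\phi\}$ from Lemma~\ref{Lem:transv}, this yields $\overline{\mathcal{O}} \cap S = \{\phi\}$. Since $\phi$ lies in the smooth locus of $\overline{\mathcal{O}}$, the stalk of $\IC_{\overline{\mathcal{O}}}$ at $\phi$ is $\uk[\dim\mathcal{O}]$, and the slice shift $[\dim S - \dim X(w+w')] = [-\dim\mathcal{O}]$ converts this to the skyscraper $\uk_{\{\phi\}}$ on $S$.

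The main obstacle is to verify cleanly that every $A(w+w')$-orbit closure in $X(w+w')$ contributes a non-zero IC summand to $\cL_{w+w'}$, i.e.\ that the correspondence between $A(w+w')$-orbits on $X(w+w')$ and $\Dom(W_\xi(w+w'))$ induced by $\Phi_{w+w'}$ is a bijection. This is essentially the content of the Kimura--Qin reformulation and should be extracted from their setup. The auxiliary fact that transversal slicing, with the shift $[\dim S - \dim X(w+w')]$, carries simple equivariant IC sheaves to IC sheaves of transverse intersections is a standard slicing result. Granted these two inputs, the proposition reduces to the dimension count above.
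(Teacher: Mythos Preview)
Your overall strategy matches the paper's: use the transversality from Lemma~\ref{Lem:transv} to see that $\cL_{w+w'}|_S$ is semisimple perverse (the paper cites \cite[Theorem~5.4.1]{GM2}), get $\uk_S[\dim S]$ from $\IC_{0,w+w'}|_S$, and get $\uk_{\{\phi\}}$ from the summand of $\cL_{w+w'}$ supported on $\overline{A(w+w')\phi}$, using $S\cap A(w+w')\phi=\{\phi\}$.

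The gap is precisely where you locate it, but your proposed resolution is too optimistic. You assert that \emph{every} $A(w+w')$-orbit closure contributes to $\cL_{w+w'}$ and that this bijection ``is essentially the content of the Kimura--Qin reformulation.'' Proposition~\ref{Prop:NKQ} gives only an identification of varieties and morphisms; it says nothing about which orbit closures in $X(w+w')$ support the simple summands $\IC_{v,w+w'}$ of $\cL_{w+w'}$. All we know a priori is that each $\IC_{v,w+w'}$ is the IC sheaf of \emph{some} orbit closure (since orbits have connected stabilizers), not that the map $v\mapsto\text{orbit}$ is surjective. The paper does \emph{not} prove any such bijection; instead it proves directly, and this is the bulk of the argument, that the particular orbit $O=A(w+w')\phi$ occurs. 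Concretely, taking $v$ to be the dimension vector of $K:=\Ker\phi_\xi(w)$ (under the assumption $E(\phi_\xi(w'),\phi_\xi(w))=0$), the paper passes via the Fourier--Laumon transform to the annihilator bundle
\[
F(v,w+w')^\perp=\{(N,\psi)\in\Gr_v(I^{(w+w')(0)})\times X(w+w')\mid N\subset\Ker\psi\}
\]
and shows, by an explicit open-subset and semicontinuity argument using $\Ext^1$-vanishing and Betti vectors, that the image of the projection $p_{v,w+w'}^\perp$ is exactly $\bar O$. Hence a shift of $\IC(\bar O)$ appears in $(p_{v,w+w'}^\perp)_!\uk$, and therefore $\IC(\bar O)=\IC_{v',w+w'}$ for some $v'$. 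Without an argument of this kind (or an independent proof of the bijection you invoke), the claim that $\uk_{\{\phi\}}$ is a summand of $\cL_{w+w'}|_S$ is unproved.
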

\begin{proof}
For simplicity, we put $\tw \seq w +w'$ in this proof.
By Lemma~\ref{Lem:transv}, $S$ is a transversal slice through $\phi$. 
By \cite[Theorem~5.4.1]{GM2}, $\IC_{v,\tw}|_S$ is a simple perverse sheaf for any possible $v$ and hence $\cL_{\tw}$ a semisimple perverse sheaf. 
It contains $\IC_{0,\tw}|_S = \uk_{S}[\dim S]$ as a summand.   
It remains to show that $\IC_{v, \tw}|_S = \uk_{\{\phi\}}$ for some $v$.
To this end, it is enough to show that the intersection cohomology complex $\IC(\bar{O})$ of the closure of the orbit $O \seq A(\tw)\phi$ coincides with $\IC_{v, \tw}$ for some $v$ because we know $O \cap S = \{ \phi \}$ by the last assertion of Lemma~\ref{Lem:transv}.
In view of Proposition~\ref{Prop:NKQ}, it suffices to show that a shift of $\IC(\bar{O})$ appears as a summand of $\Phi_{\tw}((p_{v, \tw})_!\uk_{F(v,\tw)})$ for a suitable $v \in \N^\II$. 

By symmetry, we may assume $E(\phi_\xi(w'), \phi_\xi(w)) = 0$.
Put $K \seq \Ker(\phi_\xi(w))$ and $K' \seq \Ker(\phi_\xi(w'))$. 
By \eqref{eq:Einv}, we know that $\Ext^1_{Q_\xi}(K,K)$, $\Ext^1_{Q_\xi}(K',K')$ and $\Ext^1_{Q_\xi}(K', K)$ all vanish. 
Let $v \in \N^\II$ be the dimension vector of $K$.
We shall show that a shift of $\IC(\bar{O})$ appears in $\Phi_{\tw}((p_{v, \tw})_!\uk_{F(v,\tw)})$ for this $v$.  
By definition, $F(v,\tw)$ is a vector subbundle of the trivial bundle $\Gr_{v}(I^{\tw(0)}) \times X'(\tw)$ over the quiver Grassmannian $\Gr_{v}(I^{\tw(0)})$. 
Let $F(v, \tw)^\perp$ denote its annihilator subbundle in $\Gr_{v}(I^{\tw(0)}) \times X(\tw)$. 
By \cite[Lemma~3.1.7]{KQ}, it is described as
\begin{equation} 
F(v, \tw)^\perp = \{ (N, \psi) \in \Gr_{v}(I^{\tw(0)}) \times X(\tw) \mid N \subset \Ker \psi \}.
\end{equation}
By \cite[Corollary~6.9.14 \& Proposition~6.9.15]{Ach}, $\Phi_{\tw}((p_{v, \tw})_!\uk_{F(v,\tw)})$ is isomorphic to a shift of $(p_{v, \tw}^\perp)_!\uk_{F(v',\tw)^\perp}$, where $p_{v, \tw}^\perp \colon F(v, \tw)^\perp \to X(\tw)$ denotes the second projection $(N, \psi) \mapsto \psi$. 

Now, we have to prove
that a shift of $\IC(\bar{O})$ occurs in $(p_{v, \tw}^\perp)_!\uk_{F(v,\tw)^\perp}$.  
We need additional notations. 
For $a, b \in \N^\II$ (resp.\ $\NII$), we write $a \le b$ if $a_i \le b_i$ for all $i \in \II$ (resp.\ $a_i(k) \le b_i(k)$ for all $(i,k) \in \II \times \{ 0,1\}$).
For $M \in \rep \C Q_\xi$, we define its Betti vector $b_{M} = (b_M(0), b_M(1)) \in \NII$ by $b_{M,i}(k) \seq \dim \Ext_{Q_\xi}^k(S_i, M)$ for $k \in \{ 0,1\}$ and $i \in \II$. 
Let $\rho_M \in X(b_M)$ denote the minimal injective resolution of $M$. 
For any $a = (a(0), a(1)) \in \NII$ such that $a(0) \le a(1)$, let $\nu_a \in X(a)$ be an injection $I^{a(0)} \to I^{a(1)}$. 
With these notations, it is easy to see that any $\psi \in X(w)$ decomposes as $\psi \simeq \rho_{M} \oplus \nu_{w - b_M}$ with $M = \Ker \psi$.

Let us consider the subset $U$ of $F(v,\tw)^\perp$ consisting of pairs $(N,\psi)$ such that (i) $N \simeq K$, (ii) $\Ext^1_{Q_\xi}(\Ker \psi/ N, K\oplus \Ker \psi/ N) = 0$, and (iii) $b_{\Ker \psi / N} \le b_{K'}$. 
Since $K$ is a generic representation and the functions mapping $(N,\psi) \in F(v,\tw)^\perp$ to $\dim\Ext^1_{Q_\xi}(\Ker \psi/ N, K \oplus \Ker \psi/ N)$ and $b_{\Ker \psi/N}$ are upper semi-continuous, $U$ is an open subset.
Moreover, it is non-empty as $(K,\phi) \in U$ and hence dense in the smooth connected variety $F(v,\tw)^\perp$. 
We claim that, for any $(N, \psi) \in U$, there is an isomorphism $\psi \simeq \phi$.
Once the claim is verified, we have $p_{v,\tw}^\perp(U) = O$, which implies $p_{v,\tw}^\perp(F(v,\tw)^\perp) = \bar{O}$. 
Therefore a shift of $\IC(\bar{O})$ must contribute to $(p_{v, \tw}^\perp)_!\uk_{F(v,\tw)^\perp}$ as desired.

We prove the claim. 
Assume $(N, \psi) \in U$.
By the conditions (i) and (ii), we have $\Ker \psi \simeq K \oplus C$, where $C \seq \Ker \psi / N$. 
Then, we have 
\[\psi \simeq \rho_{K \oplus C} \oplus \nu_{\tw-b_{K \oplus C}} = \rho_{K} \oplus \rho_{C} \oplus \nu_{w - b_K} \oplus \nu_{w' - b_C}.\]
As $\phi_\xi(w) \simeq \rho_K \oplus \nu_{w - b_K}$, we have $\psi \simeq \phi_\xi(w) \oplus \psi'$, where $\psi' \seq \rho_C \oplus \nu_{w'-b_C} \in X(w')$.
Since $\phi_\xi(w')$ is in the unique open orbit in $X(w')$, it follows that $b_C = b_{\Ker \psi'} \ge b_{\Ker \phi_\xi(w')} = b_{K'}$. 
The condition (iii) forces $b_C = b_{K'}$, which implies that $C$ shares the same dimension vector as $K'$. 
Again by (ii), we have $\Ext^1_{Q_\xi}(C,C) = 0$ and hence $C \simeq K'$.
This implies $\psi' \simeq \rho_{K'} \oplus \nu_{w' - b_{K'}} \simeq \phi_\xi(w')$.
Thus, we obtain $\psi \simeq \phi_{\xi}(w) \oplus \psi' \simeq \phi_\xi(w) \oplus\phi_\xi(w') = \phi$. 
\end{proof}

\subsection{Proof of Theorem~\ref{Thm:main}} \label{Ssec:proof}
Our goal is to show the equality 
\begin{equation} \label{eq:goal}
\oo(L_\xi(w), L_\xi(w')) = \dim E(\phi_\xi(w), \phi_\xi(w'))
\end{equation}
for any $w,w' \in \NII$.
We first prove it under the assumption~\eqref{eq:AssumpE}, where we obtain a sheaf theoretic interpretation of $R$-matrices as a byproduct.

\begin{Prop} \label{Prop:last}
Under the assumption~\eqref{eq:AssumpE}, the equality~\eqref{eq:goal} holds.
\end{Prop}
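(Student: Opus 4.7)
My plan is to use the characterization of the pole order from Remark~\ref{Rem:Mu}: $\oo(L_\xi(w), L_\xi(w'))$ is the smallest $d \geq 0$ such that $u^d \wh{R}_{L_\xi(w), L_\xi(w')}$ maps the lattice $L_\xi(w) \otimes L_\xi(w')[\![u]\!]$ into $L_\xi(w')[\![u]\!] \otimes L_\xi(w)$ with non-zero specialization at $u=0$. The strategy is to translate this characterization into a cohomological degree count on the slice $S$, using the identifications of Proposition~\ref{Prop:LLoverS} together with the summand structure of $\cL_{w+w'}|_S$ provided by Proposition~\ref{Prop:key}.

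The two sub-cases of~\eqref{eq:AssumpE} are handled by parallel arguments with the roles of $+$ and $-$ interchanged; I describe the case $E(\phi_\xi(w'), \phi_\xi(w)) = 0$. Setting $n \seq \dim E(\phi_\xi(w), \phi_\xi(w'))$, Lemma~\ref{Lem:transv} gives $E^+ = 0$, so $S = S^- = \phi + E^-$ is smooth of dimension $n$ and $S^+ = \{\phi\}$. In particular, $j_+ \colon \{\phi\} \hookrightarrow S$ and $i_- \colon \{\phi\} \hookrightarrow S$ are closed embeddings of codimension $n$, while $i_+$ and $j_-$ are identities. Writing $\cF \seq \cL_{w+w'}|_S$, Proposition~\ref{Prop:LLoverS} realizes the source and target of $\wh{R}_{L_\xi(w), L_\xi(w')}$ as $\varphi^*_{w+w'} \hrH^\bullet_{T(w')}(i_+^!(j_+)^* \cF)$ and $\varphi^*_{w+w'} \hrH^\bullet_{T(w')}(i_-^!(j_-)^* \cF)$, reducing the problem to a comparison on these complexes.

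The key computation takes place on the summand $\uk_S[\dim S]$ of $\cF$ produced by Proposition~\ref{Prop:key}. Using the standard relation $i^! \uk_X \simeq \uk_Y[-2c]$ for a smooth closed embedding $Y \hookrightarrow X$ of codimension $c$, one obtains
\[ i_+^!(j_+)^* \uk_S[\dim S] \simeq \uk_{\{\phi\}}[n], \qquad i_-^!(j_-)^* \uk_S[\dim S] \simeq \uk_{\{\phi\}}[-n], \]
so the corresponding contributions to the two lattices are free rank-one $\kk[\![u]\!]$-modules whose canonical generators sit in cohomological degrees $-n$ and $+n$ respectively. In analogy with the proof of Proposition~\ref{Prop:LoverX} (where the ``trivial'' summand $\IC_{0,w}$ carries the $\ell$-highest weight vector), I expect that the $U_q(\Lfg)[\![u]\!]$-module isomorphisms of Proposition~\ref{Prop:LLoverS} transport the $\ell$-highest weight vectors $v_w \otimes v_{w'}$ and $v_{w'} \otimes v_w$ to these canonical generators. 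Since $\wh{R}$ sends the first to the second and the target generator lies cohomological degree $2n$ above the source generator, the map is forced to absorb a factor of $u^n \in \rH^{2n}_{T(w')}(\pt,\kk)$, so $u^n \wh{R}$ preserves lattices, giving $\oo \le n$. The converse bound $\oo \ge n$ uses the complementary $\uk_{\{\phi\}}$ direct summand of $\cF$ (also supplied by Proposition~\ref{Prop:key}): it contributes to both sides at the same cohomological degree, so the specialization $(u^n \wh{R})|_{u=0}$ is non-zero on this piece.

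The main obstacle I anticipate is the rigorous matching of the algebraically defined normalized $R$-matrix with a geometric comparison morphism between $i_+^!(j_+)^*\cF$ and $i_-^!(j_-)^*\cF$, and the verification that the identifications of Proposition~\ref{Prop:LLoverS} transport the $\ell$-highest weight vectors exactly to the canonical generators of the $\uk_S[\dim S]$-summand contributions with no extraneous unit scaling. This amounts to a careful normalization check, compatible with the Fourier--Laumon transform, Braden-type hyperbolic localization, and the coproduct of $U_q(\Lfg)$, refining the material of \S\ref{Ssec:NKQ}--\ref{Ssec:LLoverX}.
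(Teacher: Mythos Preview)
Your outline tracks the paper's proof closely, but the central inference ``since the target generator lies $2n$ degrees above the source generator, $u^n\wh{R}$ preserves lattices'' is not justified, and this is where the real work lies. The cohomological grading on $\hrH^\bullet_{T(w')}(\,\cdot\,)$ is not respected by the $U_q(\Lfg)$-action (which factors through a \emph{completed} Yoneda algebra), so there is no reason for $\wh{R}$ to be homogeneous; knowing only where the two highest weight vectors sit tells you nothing about the pole order on the remaining weight spaces. What the paper actually does is construct a concrete lattice-preserving $U_q(\Lfg)[\![u]\!]$-morphism between the two sides: in the case at hand it is the pre-composition map
\[
\eta^{*}\colon \Hom^{\bullet}_{T(w')}(\uk_{\{\phi\}},\cL)^\wedge \longrightarrow \Hom^{\bullet}_{T(w')}(\uk_{S}[d_S],\cL)^\wedge
\]
induced by the adjunction unit $\eta\colon \uk_{S}[d_S]\to i_{*}i^{*}\uk_{S}[d_S]=\uk_{\{\phi\}}[d_S]$. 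Because $\eta^{*}$ is pre-composition it automatically commutes with the Yoneda (hence $U_q(\Lfg)$) action, so it is some $\kk(\!(u)\!)$-multiple of $\wh{R}$; computing on the highest weight vector using $\vep\circ\eta = c\,u^{d_S}\cdot\id_{\uk_S[d_S]}$ (the equivariant Euler class of $S$) gives $\eta^{*}=c\,u^{d_S}\wh{R}$ with $c\in\kk^\times$, whence $\oo\le d_S$. The lower bound then follows exactly as you indicate, by checking that $\eta^{*}|_{u=0}\neq 0$ on the $\uk_{\{\phi\}}$ summand supplied by Proposition~\ref{Prop:key}. So the ``geometric comparison morphism'' you flag in your last paragraph is not a normalization afterthought: it is the missing ingredient that produces the upper bound.

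A related sign issue: with the conventions used in the paper's proof of the proposition, the hypothesis $E(\phi_\xi(w'),\phi_\xi(w))=0$ gives $S^{+}=S$ and $S^{-}=\{\phi\}$, so that the source lattice is $\Hom^\bullet_{T(w')}(\uk_{\{\phi\}},\cL)^\wedge$ and the target is $\Hom^\bullet_{T(w')}(\uk_{S}[d_S],\cL)^\wedge$. This orientation is what makes the adjunction map $\eta^{*}$ go in the required direction; with your assignment ($S^{+}=\{\phi\}$) there is no nonzero morphism of complexes to pre-compose with in the needed direction.
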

\begin{proof}
For simplicity, we put $L \seq L_\xi(w)$ and $L' \seq L_\xi(w')$ in this proof.
Let $\phi \seq \phi_\xi(w) \oplus \phi_\xi(w')$ as before and $i \colon \{ \phi \} \to S$ denote the inclusion.
We have the following morphisms arising from the adjunction unit/counit:
\[  \eta \colon \uk_{S} [d_S] \to i_* i^* \uk_S[d_S] = \uk_{\{\phi\}}[d_S], \quad
\vep \colon \uk_{\{\phi\}} = i_! i^! \uk_S [2d_S] \to \uk_S[2d_S],\] 
where $d_S \seq \dim S$.
We also abbreviate $\cL_{w+w'}|_S$ as $\cL$, and $T(w')$ as $T$.

First, we consider the case when $E(\phi_\xi(w), \phi_\xi(w')) = 0$. 
In this case, we have $S^+ = \{\phi\}$, $S^- = S$ and hence $(i_+)^! (j_+)^*\cL = i^*\cL$, $(i_-)^! (j_-)^*\cL = i^! \cL$. 
Moreover, we have $i^*\cL \simeq p_* \cL$ with $p \colon S \to \{\phi\}$ being the obvious morphism (cf.~\cite[Proposition~2.3]{FW}).
By Proposition~\ref{Prop:LLoverS}, we have
\begin{align}
L \otimes L'[\![u]\!] &\simeq \hrH^\bullet_{T}(p_*\cL) \simeq \Hom^\bullet_{T}(\uk_S[d_S], \cL)^\wedge, \label{eq:LL'}\\
L'[\![u]\!] \otimes L & \simeq \hrH^\bullet_{T}(i^!\cL) \simeq  \Hom^\bullet_{T}(\uk_{\{0\}},\cL)^\wedge. \label{eq:L'L}
\end{align}
Choose $\ell$-highest weight vectors $v_L \in L$ and $v_{L'} \in L'$.  
We shall identify the images of $v_L \otimes v_{L'}$ (resp.\ $v_{L'} \otimes v_L$) under the isomorphism~\eqref{eq:LL'} (resp.\ \eqref{eq:L'L}). 
Recall the isomorphism $\varphi_{w+w'}^* L_{0,w+w'} \simeq L_\xi(w+w')$ from Theorem~\ref{Thm:Nak}.
Consider the $1$-dimensional subspace $(L_{0,w+w'})_0 \subset L_{0,w+w'}$ corresponding to the $\ell$-highest weight space of $L_\xi(w+w')$. 
We have the embedding of the corresponding summand $\iota \colon \uk_S[d_S] = (L_{0,w+w'})_0 \boxtimes \uk_S[d_S] \subset \cL$.
By construction, this contributes to the $\ell$-highest weight spaces of $L\otimes L'[\![u]\!]$ and $L'[\![u]\!] \otimes L$. 
More precisely, we have the following commutative diagrams:
\[
\xymatrix{
L\otimes L'[\![u]\!] \ar[r]^-{\simeq}_-{\eqref{eq:LL'}} 
&\Hom^\bullet_T(\uk_S[d_S], \cL)^\wedge \\
\kk[\![u]\!] (v_L \otimes v_{L'}) \ar[r]^-{\simeq} \ar@{^{(}->}[u]^-{\text{inclusion}} 
& \Hom^\bullet_T(\uk_S[d_S], \uk_S[d_S])^\wedge, \ar@{^{(}->}[u]^-{\iota_*} \\
}
\]
\[
\xymatrix{
L'[\![u]\!]\otimes L \ar[r]^-{\simeq}_-{\eqref{eq:L'L}} 
&\Hom^\bullet_T(\uk_{\{\phi\}}, \cL)^\wedge \\
\kk[\![u]\!] (v_{L'} \otimes v_{L}) \ar[r]^-{\simeq} \ar@{^{(}->}[u]^-{\text{inclusion}} 
& \Hom^\bullet_T(\uk_{\{\phi\}}, \uk_S[d_S])^\wedge, \ar@{^{(}->}[u]^-{\iota_*} \\
}
\]
where $\iota_*$ means the post-composition with $\iota$.
Since $\Hom^\bullet_T(\uk_S[d_S], \uk_S[d_S])^\wedge$ is generated over $\kk[\![u]\!]$ by the identity $\id_{\uk_S[d_S]} \in \Hom_T^0(\uk_S[d_S], \uk_S[d_S])$, we may assume that the isomorphism~\eqref{eq:LL'} sends the vector $v_L \otimes v_{L'}$ to $\id_{\uk_S[d_S]}$.  
By the same reason, the isomorphism~\eqref{eq:L'L} sends the vector $v_{L'}\otimes v_L$ to $\vep \in \Hom_T^{d_S}(\uk_{\{\phi\}}, \uk_{S}[d_S])$.
Then, the following diagram commutes:
\[
\xymatrix{
L\otimes L'(\!(u)\!) \ar@{<-^{)}}[r] \ar[d]_-{\wh{R}_{L,L'}}&
L\otimes L'[\![u]\!] \ar[r]^-{\simeq}_-{\eqref{eq:LL'}} 
&\Hom^\bullet_T(\uk_S[d_S], \cL)^\wedge \ar[d]^-{\vep^*}\\
L'(\!(u)\!) \otimes L \ar@{<-^{)}}[r] &
L'[\![u]\!]\otimes L \ar[r]^-{\simeq}_-{\eqref{eq:L'L}} 
&\Hom^\bullet_T(\uk_{\{\phi\}}, \cL)^\wedge, 
}
\]
where $\vep^*$ denotes the pre-composition with $\vep$.
Indeed, the homomorphism $\vep^*$ intertwines the $U_q(\Lfg)[\![u]\!]$-actions (given through $\varphi_{w+w'}$), and sends $\iota_*\id_{\uk_S[d_S]}$  ($=$ the image of $v_L \otimes v_{L'}$) to $\iota_*\vep$ ($=$ the image of $v_{L'} \otimes v_{L}$). 
{As such, it should correspond to $\wh{R}_{L,L'}$ after the localization by the characterization of the normalized $R$-matrix (see \S\ref{Ssec:Rmat}).}
The above commutative diagram implies $\wh{R}_{L,L'}(L \otimes L'[\![u]\!]) \subset L'[\![u]\!]\otimes L$. 
By Remark~\ref{Rem:Mu}, we obtain $\oo(L,L') = 0 = E(\phi_\xi(w), \phi_\xi(w'))$ as desired.

Next, we consider the remaining case when $E(\phi_\xi(w'), \phi_\xi(w))=0$.
Then we have $S^+ = S$ and $S^- = \{\phi\}$.
Similarly, there are isomorphisms
\begin{align}
L \otimes L'[\![u]\!] & \simeq \Hom^\bullet_{T}(\uk_{\{\phi\}},\cL)^\wedge, \label{eq:LL'2}\\
L'[\![u]\!] \otimes L & \simeq  
\Hom^\bullet_{T}(\uk_S[d_S], \cL)^\wedge, \label{eq:L'L2}
\end{align}
under which the vector $v_L \otimes v_{L'}$ corresponds to $\iota_*\vep$, and the vector $v_{L'} \otimes v_L$ corresponds to $\iota_* \id_{\uk_S[d_S]}$.
Let $c \in \kk$ be a scalar determined by the equation 
\[\vep \circ \eta = c u^{d_S} \id_{\uk_S[d_S]}\] 
in $\Hom^{2d_S}_T(\uk_S[d_S], \uk_S[d_S]) = \rH_T^{2d_S}(S, \kk)$.
We have $c \neq 0$ as $c u^{d_S}$ corresponds to the $T$-equivariant Euler class of $E$.
Then, the diagram 
\[
\xymatrix{
L\otimes L'(\!(u)\!) \ar@{<-^{)}}[r] \ar[d]_-{cu^{d_S}\wh{R}_{L,L'}}&
L\otimes L'[\![u]\!] \ar[r]^-{\simeq}_-{\eqref{eq:LL'2}} 
&\Hom^\bullet_T(\uk_{\{\phi\}}, \cL)^\wedge \ar[d]^-{\eta^*}\\
L'(\!(u)\!) \otimes L \ar@{<-^{)}}[r] &
L'[\![u]\!]\otimes L \ar[r]^-{\simeq}_-{\eqref{eq:L'L2}} 
&\Hom^\bullet_T(\uk_S[d_S], \cL)^\wedge
}
\]
commutes because $\eta^*$ intertwines the $U_q(\Lfg)[\![u]\!]$-actions and sends $\iota_*\vep$ ($=$ the image of $v_{L} \otimes v_{L'}$) to
$\iota_*(cu^{d_S}\id_{\uk_S[d_S]})$  ($=$ the image of $cu^{d_S}v_L \otimes v_{L'}$).  
Moreover, the specialization of $\eta^*$ at $u=0$ is equal to its non-equivariant version $\eta^* \colon \Hom^\bullet(\uk_{\{\phi\}}, \cL) \to \Hom^\bullet(\uk_S[d_S], \cL)$ (use Lemma~\ref{Lem:eq_change}), which is non-zero as $\cL$ contains $\uk_{\{\phi\}}$ as a summand by Proposition~\ref{Prop:key}. 
Therefore, we have $u^{d_S} \wh{R}_{L,L'}(L \otimes L'[\![u]\!]) \subset L'[\![u]\!] \otimes L$ and $(u^{d_S} \wh{R}_{L,L'})|_{u= 0} \neq 0$.
By Remark~\ref{Rem:Mu}, we obtain  $\oo(L,L') = d_S = \dim E(\phi_\xi(w), \phi_\xi(w'))$.
\end{proof}

Finally, we treat the general case.
Let $\phi_\xi(w) = \phi_\xi(w^{(1)}) \oplus \cdots \oplus \phi_\xi(w^{(l)})$  and $\phi_\xi(w') = \phi_\xi(w'^{(1)}) \oplus \cdots \oplus \phi_\xi(w'^{(l')})$ be decompositions in $C^2(\inj \C Q_\xi)$ with all the summands indecomposable. 
Then, by Corollary~\ref{Cor:fact}, we have the corresponding factorizations $L_\xi(w) \simeq L_\xi(w^{(1)}) \otimes \cdots \otimes L_\xi(w^{(l)})$ and $L_\xi(w') \simeq L_\xi(w'^{(1)}) \otimes \cdots \otimes L_\xi(w'^{(l')})$.
By Lemma~\ref{Lem:omult}, we have
\begin{equation} \label{eq:sum1}
\oo(L_\xi(w), L_\xi(w')) = \sum_{1 \le k \le l} \sum_{1 \le k' \le l'}
\oo(L_\xi(w^{(k)}), L_\xi(w'^{(k')})). 
\end{equation}
On the other hand, we have an obvious isomorphism
\begin{equation} \label{eq:sum2}
E(\phi_\xi(w), \phi_\xi(w')) \simeq \bigoplus_{1 \le k \le l} \bigoplus_{1 \le k' \le l'} E(\phi_\xi(w^{(k)}), \phi_\xi(w'^{(k')})).  
\end{equation}
For indecomposables, the assumption~\eqref{eq:AssumpE} is always satisfied by Remark~\ref{Rem:order} and \eqref{eq:EE}. Therefore, by Proposition~\ref{Prop:last}, we have 
\[ \oo(L_\xi(w^{(k)}), L_\xi(w'^{(k')})) = \dim E(\phi_\xi(w^{(k)}), \phi_\xi(w'^{(k')}))\] 
for any $1 \le k \le l$ and $1 \le k' \le l'$.
Thus, together with \eqref{eq:sum1} and \eqref{eq:sum2}, we get \eqref{eq:goal} for general $w,w' \in \NII$, completing the proof of Theorem~\ref{Thm:main}.

\subsection*{Acknowledgments}
{The author would like to thank the anonymous referee for many helpful suggestions.}
This work was supported by JSPS KAKENHI Grant No.\ JP23K12955.   


\end{document}